\documentclass[a4paper,11pt]{amsart}
\usepackage{amssymb}
\usepackage{amscd}
\usepackage{comment}
\usepackage{amsmath,amsthm}
\usepackage[colorlinks=true]{hyperref}
\usepackage{enumerate}
\usepackage{booktabs,multirow}
\usepackage{tikz}
\usepackage{rotating}
\usetikzlibrary{patterns}
\usetikzlibrary{decorations.pathreplacing}
\usetikzlibrary{calc,through}

\allowdisplaybreaks[1]
\setlength{\parskip}{4pt}
\setlength{\oddsidemargin}{0in}
\setlength{\evensidemargin}{0in}
\setlength{\marginparwidth}{0in}
\setlength{\marginparsep}{0in}
\setlength{\marginparpush}{0in}
\setlength{\textwidth}{6.5in}

\numberwithin{figure}{section}
\numberwithin{equation}{section}

\title{Networks bijective to permutations}
\author[K.~Shigechi]{Keiichi~Shigechi}
\email{k1.shigechi AT gmail.com}
\date{\today}

\newcommand\tikzpic[2]{
\raisebox{#1\totalheight}{
\begin{tikzpicture}
#2
\end{tikzpicture}
}}
\newtheorem{theorem}[figure]{Theorem}
\newtheorem{example}[figure]{Example}
\newtheorem{lemma}[figure]{Lemma}
\newtheorem{defn}[figure]{Definition}
\newtheorem{prop}[figure]{Proposition}
\newtheorem{cor}[figure]{Corollary}

\newtheorem{remark}[figure]{Remark}
\begin{document}

\begin{abstract}
We study the set of networks, which consist of sources, sinks and neutral points, bijective 
to the permutations.
The set of directed edges, which characterizes a network, is constructed from a polyomino or a Rothe 
diagram of a permutation through a Dyck tiling on a ribbon.
We introduce a new combinatorial object similar to a tree-like tableau, which we call a forest. 
A forest is shown to give a permutation, and be bijective to a network corresponding to the 
inverse of the permutation.
We show that the poset of networks is a finite graded lattice and admits an $EL$-labeling.
By use of this $EL$-labeling, we show the lattice is supersolvable and compute the M\"obius 
function of an interval of the poset.
\end{abstract}

\maketitle

\section{Introduction}
A network is a graph consisting of vertices and directed edges.
A class of networks are introduced in \cite{Pos06} to study the totally 
non-negative Grassmannian. 
In this paper, we study a special class of networks, which is bijective to the set of permutations.
They have sources and sinks which have only outgoing and incoming edges respectively.
Equivalently, there is no vertex which has outgoing and incoming edges at the same time 
in a network.
We impose one more condition on networks: if vertices $i$ and $k$, and $j$ and $l$ are connected 
by directed edges, then vertices $j$ and $k$ are also connected by a directed edge for $i<j<k<l$.
We establish a bijection between a network and a permutation via a 
set of directed edges in Section \ref{sec:Network}.
Although this bijection depends on the order of directed edges, it is compatible 
with other combinatorial objects, which are polyominoes and forests, appearing in 
Sections \ref{sec:Rothe} and \ref{sec:Poset}.

In section \ref{sec:Rothe}, we consider polyominoes and Rothe diagrams for permutations.
A polyomino $P$ is a diagram consisting of unit squares. 
We consider a polyomino which satisfies some conditions.
To connect a polyomino $P$ with a network, we consider the south-most ribbon in the polyomino $P$.
Here a ribbon is a connected skew shape containing no $2$-by-$2$ rectangles.  
The ribbon $\mathtt{Rib}(P)$ gives a set of directed edges. 
To obtain a set of directed edges form the ribbon, we introduce another combinatorial object 
which is called a Dyck tiling on the ribbon.
A Dyck tiling on a ribbon is a tiling on a ribbon by Dyck tiles which are characterized 
by Dyck paths. It is a special case of Dyck tilings studied in \cite{KeyWil12,KimMesPanWil14,ShiZinJus12}.
The ribbon $\mathtt{Rib}(P)$ also gives another polyomino $P'$ from $P$. Here, the polyomino $P'$ is smaller than $P$.
Then, we obtain another set of directed edges from the new polyomino $P'$ via a ribbon $\mathtt{Rib}(P')$ of $P'$. 
We have a sequence of decreasing polyominoes, and a sequence of ribbons in each step.
Since each ribbon gives the set of directed edges, we have a set $\mathcal{E}(P)$ of directed edges 
by taking a union of the sets of edges constructed from the ribbons. 
We define a network $N(P)$ for $P$ via the set $\mathcal{E}(P)$.
We also have a permutation $\pi:=\pi(P)$ from a polyomino $P$. By a bijection between a permutation and 
a network, we have a network $N(\pi^{-1})$ for an inverse permutation of $\pi$.
We prove that $N(P)=N(\pi^{-1})$.
A Rothe diagram is a visualization of a permutation via unit squares. By generalizing the notion of 
polyominoes, we regard a Rothe diagram as a polyomino with several connected components.
We generalize the results for a polyomino to the case of Rothe diagrams.

A set of networks is regarded as a partially ordered set (poset), and its combinatorial structures
are studied in Section \ref{sec:Poset}.
We first show that the set of networks, which are characterized by the positions of sources and sinks,  
is indeed a finite graded lattice.
By introducing the Whitney numbers of the second kind, we show that the set of networks 
satisfies that the number of elements of even rank is the same as that of odd rank.
Secondly, we study the relations between a forest and a network.
The notion of a forest is close to that of tree-like tableaux studied in \cite{AvaBouNad13} 
and that of $\reflectbox{L}$-diagrams in \cite{Pos06}.
A forest consists of a fixed Young diagram $Y$ and pointed cells in $Y$, and 
it satisfies a condition on pointed cells.
We construct a bijection $\kappa$ between a forest and a network, and interpret the relation $N(P)=N(\pi^{-1})$
in terms of a forest.
Thirdly, we introduce another map $\nu$ from a forest to a permutation. 
The bijection $\kappa$ reflects both the numbers of pointed sells and crossing cells in a forest.
On the other hand, the bijection $\nu$ reflects only the number of pointed cells.

In Section \ref{sec:shell}, we study combinatorial properties of a poset of networks.
A poset of networks is not in general Eulerian. However, it possesses the property that 
the number of elements of even rank is the same as that of odd rank.
By constructing an edge-labeling of a poset which is called $EL$-labeling, we compute the 
M\"obius function of a poset.
For any interval $[x,y]$ in a poset of networks, the M\"obius function $\mu(x,y)$ is 
either $1,-1$ or $0$.
Further, by showing this $EL$-labeling is snelling, we prove that an interval of a poset of 
networks is supersolvable.

\section{Networks with sources and sinks}
\label{sec:Network}
We introduce a notion of networks with sources and sinks.
Let $L$ be a line with $n$ points.
We consider directed edges connecting two points in $L$.
A network consists of points and edges satisfying the following four conditions:
\begin{enumerate}[({A}1)]
\item An edge is directed from the point $i$ to another point $j$ with $i<j$. 
We call the point $i$ a source and $j$ a sink. There exists at most one directed edge from $i$ to $j$.
\item There is no incoming edges and at least one outgoing edges on a source.
\item There is no outgoing edges and at least one incoming edges on a sink.
\item A point is called a neutral point if there is no (outgoing and incoming) edges on the point. 
\end{enumerate}
We denote by $(i,j)$ a directed edge connecting the point $i$ with the point $j$.
A size of a edge $(i,j)$ is defined to be the difference $j-i$.
Suppose we have two edges $(i,k)$ and $(j,l)$.
Two edges are said to be crossing if four points satisfy 
$i<j<k<l$.

We consider the subset of networks satisfying the following property:
\begin{enumerate}[(B1)]
\setcounter{enumi}{0}
\item Suppose a network $N$ contains two directed edges $(i,k)$ and $(j,l)$ with $i<j<k<l$.
Then, $N$ contains the directed edge $(j,k)$.
\end{enumerate}

\begin{defn}
We denote by $\mathcal{N}(n)$ be the set of networks which consist of $n$ points satisfying
the condition (B1).
\end{defn}

For example, the network
\begin{align}
\label{fig:nonad}
\tikzpic{-0.5}{[scale=0.6]
\foreach \x in {1,2,3,4}{
\draw(\x,0)node{$\bullet$}node[anchor=north]{$\x$};}
\draw(2,0)..controls(2,1.5)and(4,1.5)..(4,0);
\draw(1,0)..controls(1,1.5)and(3,1.5)..(3,0);
}
\end{align}
is not admissible.
This network violates the condition (B1) since it does not contain the directed edge $(2,3)$.

\begin{example}
We consider networks with four points, two of which, the points $1$ and $2$, are 
sources and the other two points $3$ and $4$ are sinks.
In fact, we have five such networks as in Figure \ref{fig:n=4}.
\begin{figure}[ht]
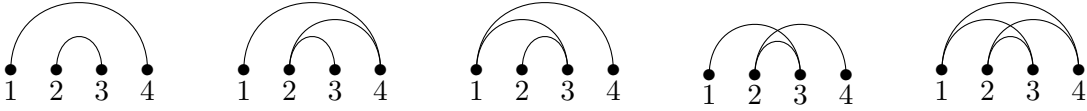

\tikzpic{-0.5}{[scale=0.6]
\foreach \x in {1,2,3,4}{
\draw(\x,0)node{$\bullet$}node[anchor=north]{$\x$};}
\draw(1,0)..controls(1,2)and(4,2)..(4,0);
\draw(2,0)..controls(2,1)and(3,1)..(3,0);
}\quad
\tikzpic{-0.5}{[scale=0.6]
\foreach \x in {1,2,3,4}{
\draw(\x,0)node{$\bullet$}node[anchor=north]{$\x$};}
\draw(1,0)..controls(1,2)and(4,2)..(4,0);
\draw(2,0)..controls(2,1)and(3,1)..(3,0);
\draw(2,0)..controls(2,1.5)and(4,1.5)..(4,0);
}\quad
\tikzpic{-0.5}{[scale=0.6]
\foreach \x in {1,2,3,4}{
\draw(\x,0)node{$\bullet$}node[anchor=north]{$\x$};}
\draw(1,0)..controls(1,2)and(4,2)..(4,0);
\draw(2,0)..controls(2,1)and(3,1)..(3,0);
\draw(1,0)..controls(1,1.5)and(3,1.5)..(3,0);
}\quad
\tikzpic{-0.65}{[scale=0.6]
\foreach \x in {1,2,3,4}{
\draw(\x,0)node{$\bullet$}node[anchor=north]{$\x$};}
\draw(2,0)..controls(2,1.5)and(4,1.5)..(4,0);
\draw(1,0)..controls(1,1.5)and(3,1.5)..(3,0);
\draw(2,0)..controls(2,1)and(3,1)..(3,0);
}\quad
\tikzpic{-0.5}{[scale=0.6]
\foreach \x in {1,2,3,4}{
\draw(\x,0)node{$\bullet$}node[anchor=north]{$\x$};}
\draw(1,0)..controls(1,2)and(4,2)..(4,0);
\draw(2,0)..controls(2,1)and(3,1)..(3,0);
\draw(2,0)..controls(2,1.5)and(4,1.5)..(4,0);
\draw(1,0)..controls(1,1.5)and(3,1.5)..(3,0);
}
\caption{Networks with four points with two sources and two sinks.}
\label{fig:n=4}
\end{figure}
Note that the two right-most networks have a crossing by the edges $(1,3)$ and $(2,4)$, 
and satisfy the condition (B1). We emphasize that a network in Eq. (\ref{fig:nonad}) is not 
admissible. 
\end{example}

We first characterize the networks in $\mathcal{N}(n)$ by permutations.
\begin{theorem}
\label{thrm:cardN}
The cardinality of $\mathcal{N}(n)$ is $n!$, i.e., $|\mathcal{N}_{n}|=n!$.
\end{theorem}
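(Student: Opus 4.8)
The plan is to construct an explicit bijection between $\mathcal{N}(n)$ and the symmetric group $S_n$, exploiting the structure forced by conditions (B1), (A2), and (A3). First I would observe that each point of the line $L$ is exactly one of three types---source, sink, or neutral---and that condition (A1) forces every edge to go from a smaller-indexed source to a larger-indexed sink. The key structural claim I would establish is that condition (B1) rigidly determines \emph{all} the edges of a network once we fix a certain amount of ``local'' data. Concretely, I expect that for each sink $j$, the set of sources pointing into $j$ is an interval (or a union determined by a single parameter), because (B1) says that whenever $(i,k)$ and $(j,l)$ cross with $i<j<k<l$, the ``inner'' edge $(j,k)$ must be present; iterating this closure property should show that the incoming edges of each sink, together with the type assignment, are captured by one integer statistic per point.

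The main step is then to encode a network by a sequence of statistics that ranges over a set of size $n!$. I would proceed by induction on $n$ (or equivalently by peeling off the point $n$, or the point $1$). Removing the last point $n$: if $n$ is a neutral point it contributes a factor corresponding to one choice; if $n$ is a sink, condition (B1) should force the set of its incoming sources to be determined by a single choice among the available sources, giving a factor that, combined across all configurations, multiplies up to $n$ relative to the count for $n-1$ points. The recursion I am aiming for is $|\mathcal{N}(n)| = n \cdot |\mathcal{N}(n-1)|$, with base case $|\mathcal{N}(1)| = 1$, which yields $n!$ immediately. To make this rigorous I would define a map assigning to each network a value in $\{1,2,\dots,n\}$ recording the role of point $n$ (neutral, or sink receiving edges determined by which source is its ``lowest'' or ``closest'' incoming source under the (B1)-forced structure), and show this map fibers $\mathcal{N}(n)$ over $\mathcal{N}(n-1)$ with all fibers of size $n$.

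The hard part will be verifying that (B1) really does reduce the incoming-edge data at each sink to a single integer, i.e.\ that there is no extra freedom hiding in which subsets of sources may point into a given sink. I would need to prove a lemma of the form: \emph{if $(i,j)$ is an edge and $i<i'<j$ with $i'$ a source, then $(i',j)$ is also an edge}, so that the sources feeding a sink $j$ form a contiguous block ending just before $j$. This should follow from (B1) by choosing an appropriate crossing pair, though one must handle carefully the case where the needed witnessing edge does not yet exist and argue by a minimal counterexample or by transfinite-style closure. Once this ``interval of incoming sources'' lemma is in place, the statistic counting where each sink's block of sources begins is well defined, the recursion $|\mathcal{N}(n)| = n\cdot|\mathcal{N}(n-1)|$ follows by a clean fiber-counting argument, and the theorem is proved. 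As an alternative to the recursion, one could directly match the statistics to the Lehmer code of a permutation, which would simultaneously furnish the explicit bijection promised in the introduction; I would mention this but carry out the inductive counting as the cleanest route to the cardinality statement itself.
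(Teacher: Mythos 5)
There is a genuine gap: your central structural lemma is false. You claim that if $(i,j)$ is an edge and $i<i'<j$ with $i'$ a source, then $(i',j)$ must also be an edge, so that the sources feeding a sink form a contiguous block ending just before it. Condition (B1) only fires on \emph{crossing} pairs $(i,k),(j,l)$ with $i<j<k<l$; it imposes nothing on nested configurations, and that is fatal rather than a technicality to be handled by minimal counterexamples. Concretely, the network with edge set $\{(1,4),(2,3)\}$ (the left-most network in Figure~\ref{fig:n=4}) is admissible, since the two edges are nested and (B1) is vacuous; here $2$ is a source with $1<2<4$, yet $(2,4)$ is absent. Likewise the fourth network in that figure has sink $4$ fed only by source $2$ while $1$ is also a source. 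So the incoming-edge data at a sink is not captured by a single integer, and the per-point statistics you want (and hence any direct Lehmer-code matching) do not exist. The fibered recursion fails as well: for $n=3$, the six networks are $\emptyset$, $\{(1,2)\}$, $\{(1,3)\}$, $\{(2,3)\}$, $\{(1,3),(2,3)\}$, $\{(1,2),(1,3)\}$, and deleting point $3$ with its incident edges gives a fiber of size $4$ over the empty network and a fiber of size $2$ over $\{(1,2)\}$ --- not uniform fibers of size $3$. The identity $|\mathcal{N}(n)|=n\cdot|\mathcal{N}(n-1)|$ is numerically true, but your proposed equal-fiber argument cannot establish it.

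For comparison, the paper avoids any per-sink structure theorem entirely: it defines a canonical linear order on the edges of a network (repeatedly extract the left-most edge of minimal size, which is well defined by the no-path property (C1)), interprets edges as transpositions acting on the identity to get an injection $\sigma:\mathcal{N}(n)\rightarrow\mathcal{S}_{n}$, and separately builds an injection $\sigma':\mathcal{S}_{n}\rightarrow\mathcal{N}(n)$ by the recursive procedure (D1)--(D2), with a lemma verifying that the output of $\sigma'$ satisfies (A2), (A3) and (B1). The two injections give $|\mathcal{N}(n)|=n!$ without ever needing the incoming sources of a sink to be an interval. If you want to salvage an inductive count, you would have to track how the whole correlated edge set interacts with the deleted point, which essentially amounts to reconstructing the paper's edge-ordering bijection.
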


To prove Theorem \ref{thrm:cardN}, we construct a bijection between 
$\mathcal{N}(n)$ and the symmetric group $\mathcal{S}_{n}$ of $n$ elements.
We first construct a map $\sigma:\mathcal{N}(n)\rightarrow\mathcal{S}_{n}$,
and then construct an inverse map from $\mathcal{S}_{n}\rightarrow\mathcal{N}(n)$.
Let $N\in\mathcal{N}(n)$ be a network with $n$ points.
By definition of networks, we have the following property:
\begin{enumerate}[(C1)]
\item There exists no triplet $(i,j,k)$ such that $(i,j)$ and $(j,k)$ are the 
directed edges in $N$.
\end{enumerate}
We introduce a linear order on the edges in $N$.
We first remove the left-most edge $e_1$, which has a minimal size, from $N$.
Then, we remove the left-most edge $e_2$, which has a minimal size, from $N\setminus{e_1}$.
We continue this process until we remove all edges from $N$.
We have a sequence of edges $\{e_{i}\}_{i=1}^{r}$ where $r$ is the number of 
edges in $N$.
Note that this order is well-defined by the property (C1).

For example, the right-most network consisting of four edges in Figure \ref{fig:n=4} 
gives the sequence of the directed edges $\{(2,3),(1,3),(2,4),(1,4)\}$.

Let $\{e_{i}\}_{i=1}^{r}$ be the ordered edges defined as above.
Suppose $\pi:=\pi_1\pi_2\ldots\pi_{n}$ be a permutation in $\mathcal{S}_{n}$.
We define the action of an edge $e:=(i,j)$ of $N$ on $\pi$ as 
the exchange of $\pi_i$ and $\pi_{j}$.
In other words, we regard an edge as a transposition.
We denote by $\pi\xrightarrow{e}\pi'$ the action of the edge $e$ on $\pi$.
Then, we define a permutation $\sigma(N)$ as 
\begin{align*}
\pi^{(0)}\xrightarrow{e_1}\pi^{(1)}\xrightarrow{e_2}\ldots\xrightarrow{e_{r}}\pi^{(r)}=\sigma(N),
\end{align*}
where $\pi^{(0)}$ is an identity, and $\pi^{(i)}\in\mathcal{S}_{n}$.

\begin{remark} 
\label{remark:e1e2}
Suppose two edges $e_1$ and $e_2$ satisfies $e_1\cap e_2=\emptyset$, i.e.,  
$e_1$ and $e_2$ do not have a source or a sink in common.
Then, we can exchange the order of $e_1$ and $e_2$. Namely, we have
\begin{align*}
\tikzpic{-0.5}{
\node(0) at(0,0){$\pi_{0}$};
\node(1)at(1,0.5){$\pi_{1}$};
\node(2)at(1,-0.5){$\pi_2$};
\node(3)at(2,0){$\pi_3$};
\draw[->](0)to node[anchor=south east]{$e_1$}(1);
\draw[->](1)to node[anchor=south west]{$e_2$}(3);
\draw[->](0)to node[anchor=north east]{$e_2$}(2);
\draw[->](2)to node[anchor=north west]{$e_1$}(3);
}
\end{align*}
where $\pi_{i}$, $0\le i\le 3$, are permutations in $\mathcal{S}_{n}$.	
\end{remark}

\begin{example}
Consider the right-most network in Figure \ref{fig:n=4}.
Then, we have 
\begin{align*}
1234\xrightarrow{(2,3)}1324\xrightarrow{(1,3)}2314\xrightarrow{(2,4)}2413\xrightarrow{(1,4)}3412.
\end{align*}
The network corresponds to a permutation $3412$.
\end{example}

Since it is obvious that different networks give distinct permutations, 
the map $\sigma$ is injective. 

We construct an inverse map $\sigma':\mathcal{S}_{n}\rightarrow\mathcal{N}(n)$.
Let $\pi:=\pi_1\ldots\pi_n\in\mathcal{S}_{n}$. 
We recursively construct a graph $N(\pi)$ from $\pi$ as follows.
\begin{enumerate}[(D1)]
\item Suppose $\pi_{n}=n$. Then, the point $n$ in $N(\pi)$ is a neutral point and 
$\pi':=\pi_1\ldots\pi_{n-1}\in\mathcal{S}_{n-1}$ defines a graph on the remaining 
$n-1$ points.
\item Suppose $\pi_{n}\neq n$. Let $j$ be the minimum integer such that 
$\pi_{i}<\pi_{n}$ for $1\le i\le j-1$ and $\pi_{j}>\pi_{n}$.
Then, a graph $N(\pi)$ has an edge $(j,n)$.
We define a new permutation $\pi'$ by $\pi\xrightarrow{(j,n)}\pi'$.
We continue this procedure until we obtain a permutation $\pi'$ whose last element 
is $n$. 
We define a graph as a superposition of the edges $(j,n)$ and the graph 
of $\pi'$. Then, go to (D1).
The algorithm stops when $\pi'$ is the identity permutation.
\end{enumerate}

\begin{example}
Consider the permutation $\pi:=3412$.
Since $\pi_4=2$, we have $j=1$ by (D2). Then, we have $3412\xrightarrow{(1,4)}2413$.
As for $2413$, we have $j=2$ and $2413\xrightarrow{(2,4)}2314$.
The permutation $2314$ has $4$ at the fourth element, we apply (D1) and 
have $(j,n)=(1,3)$ by (D2). Then, we obtain $2314\xrightarrow{(1,3)}1324$.
Finally, we have $2314\xrightarrow{(2,3)}1234$. 
As a summary, we have
\begin{align*}
3412\xrightarrow{(1,4)}2413\xrightarrow{(2,4)}2314\xrightarrow{(1,3)}1324\xrightarrow{(2,3)}1234.
\end{align*}
This sequence gives the set of edges $\{(1,4),(2,4),(1,3),(2,3)\}$.
The network obtained from this set of edges is in the right-most one in Figure \ref{fig:n=4}.
\end{example}

We first show that the inverse map $\sigma'$ is well-defined, that is, 
a network obtained from a permutation $\pi$ is in $\mathcal{N}(n)$.
Especially, we have to show that an obtained network satisfies the condition (B1).
\begin{lemma}
A graph $N(\pi)$ is a network in $\mathcal{N}(n)$.
\end{lemma}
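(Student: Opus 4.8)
The plan is to verify that the graph $N(\pi)$ produced by the algorithm (D1)--(D2) satisfies all the defining conditions of a network, namely (A1)--(A4) together with the crucial condition (B1). Conditions (A1)--(A4) assert that every point is a pure source, a pure sink, or neutral, i.e.\ no point has both an incoming and an outgoing edge; this is equivalent to property (C1). So the first step is to check (C1), and the second, harder step is to check (B1).

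First I would establish that the edges added during the algorithm never create a point with both an incoming and an outgoing edge. The key observation is structural: at each stage, when $\pi_n \neq n$, we attach an edge $(j,n)$, and by the choice of $j$ in (D2) we have $\pi_i < \pi_n$ for $i < j$ and $\pi_j > \pi_n$. After the transposition $\pi \xrightarrow{(j,n)} \pi'$, the value at position $n$ strictly increases (it becomes $\pi_j > \pi_n$), so the process of filling position $n$ terminates, and every edge incident to $n$ uses $n$ as its \emph{sink} (larger endpoint). Meanwhile the smaller endpoints $j$ are always positions from which the current value gets moved \emph{to the right}, so I would argue by tracking how each position acquires the role of source or sink. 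The cleanest route is induction on $n$: once position $n$ has been fully resolved to a neutral point carrying value $n$, we drop it and recurse on $\pi' \in \mathcal{S}_{n-1}$; I must check that the edges attached to $n$ interact correctly with the inductively constructed network on $1,\ldots,n-1$, in particular that no endpoint $j$ used as a source for an edge into $n$ later becomes a sink in the recursion.

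Next, for condition (B1), suppose $N(\pi)$ contains edges $(i,k)$ and $(j,l)$ with $i<j<k<l$; I must produce the edge $(j,k)$. The natural approach is again induction on $n$, splitting on where the edges $l$ and $k$ sit relative to the top point. If both $k,l < n$ the claim follows from the inductive hypothesis applied to $\pi'$. The interesting case is when $l = n$, so $(j,n)$ is one of the edges attached to the top point during the current round, while $(i,k)$ with $k < n$ is either attached in the same round or inherited from the recursion. Here I would exploit the inequality structure coming from (D2): the smaller endpoints of the edges into $n$ are determined by the positions whose current values are less than the value being pushed rightward, and the relative order of these endpoints mirrors the crossing structure demanded by (B1). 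Concretely, I expect to show that if $(j,n)$ is an edge and $(i,k)$ is an edge with $i<j<k<n$, then at the moment position $n$ was being filled the value at position $k$ was an obstruction forcing an edge $(\cdot,k)$, and comparing the defining minimality conditions for $j$ and for the source of the edge into $k$ yields that $(j,k)$ must already be present.

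The main obstacle, I anticipate, is managing the bookkeeping of (B1) across the recursion: an edge $(i,k)$ and an edge $(j,l)$ need not be created in the same round of the algorithm, so I cannot simply read off both from a single application of (D2). The technical heart is therefore a precise description of \emph{which} edges $(\cdot, m)$ are attached to a given sink $m$ as a function of $\pi$ --- essentially identifying the source set of each sink in terms of the descents and inversions of the permutation --- and then verifying the crossing-closure (B1) purely at the level of these source sets. I would isolate this as a preliminary combinatorial lemma describing the set of sources of each point, prove it by induction, and then deduce both (C1) and (B1) as corollaries; this keeps the inductive invariant strong enough to survive the recursion.
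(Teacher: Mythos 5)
Your outline identifies the right targets --- (A2)/(A3) via the equivalent condition (C1), plus (B1) --- and correctly anticipates where the difficulty lies, but as written it is a strategy, not a proof: both verifications are deferred to a ``preliminary combinatorial lemma describing the set of sources of each sink'' that you never state, let alone prove. That lemma is exactly where all the content sits. For (C1), the dangerous scenario is that a point $j$ serves as the source of an edge $(j,k)$ and later, in a smaller round, becomes the sink of an edge $(i,j)$; your sketch says you would ``argue by tracking how each position acquires the role of source or sink,'' but gives no mechanism ruling this out. The paper does this concretely: if $(j,k)$ is created, then by (D2) the relevant entries satisfy $\nu_{i}<\nu_{k}<\nu_{j}$, and after the swap the value at position $j$ dominates the value at position $i$; tracking the intermediate edges $(p,q)$ created between $(j,k)$ and the hypothetical $(i,j)$ shows the inequality $\nu''_{i}<\nu''_{j}$ persists, and the minimality condition in (D2) then makes the edge $(i,j)$ impossible. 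Nothing in your proposal substitutes for this value-tracking or supplies an equivalent invariant.

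Similarly for (B1): your phrase ``comparing the defining minimality conditions for $j$ and for the source of the edge into $k$ yields that $(j,k)$ must already be present'' is precisely the step that needs proof, and it is not routine, because --- as you yourself note --- $(j,l)$ and $(i,k)$ are created in different rounds and the permutation mutates in between. The paper resolves this by following the entries at positions $i,j,k,l$ through the intermediate swaps: after $(j,l)$ acts one has $\nu_{i}<\nu_{l}<\nu_{j}$, after the intermediate edges and the action of $(i,k)$ one reaches a permutation $\mu$ with $\mu_{i}<\mu_{k}<\mu_{j}$, and this inequality, fed back into (D2) when sink $k$ is processed, forces the edge $(j,k)$. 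Your alternative plan --- characterize the source set of each sink (for sink $n$ it is the set of positions of left-to-right maxima of $\pi_{1}\cdots\pi_{n-1}$ whose values exceed $\pi_{n}$) and verify crossing-closure at the level of these sets --- is plausible and could likely be pushed through, but such a characterization is valid only for the permutation current at the moment each sink is processed, so you would still have to prove how these source sets transform from one round to the next. That bookkeeping, which you flag as ``the technical heart,'' is exactly what is missing, so the proposal has a genuine gap rather than being a complete alternative argument.
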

\begin{proof}
Since a graph $N(\pi)$ consists of directed edges and it satisfies (A1) and (A4), 
it is enough to show that $N(\pi)$ satisfies the conditions (A2), (A3) and (B1). 

Suppose that a graph $N(\pi)$ violates a condition (A2) or (A3).
This means that $N(\pi)$ contains two directed edges $(i,j)$ and $(j,k)$ with $i<j<k$.
By construction of $\sigma'$, there exist two permutations $\nu$ and $\nu'$ such hat 
$\nu\xrightarrow{(j,k)}\nu'$.
This implies that $(\nu_{i},\nu_{j},\nu_{k})$ satisfies 
$\nu_{i}<\nu_{k}<\nu_{j}$ and $(\nu'_{i},\nu'_{j},\nu'_{k})=(\nu_{i},\nu_{k},\nu_{j})$.
From (D2), we have a sequence of directed edges $(p,q)$ between $(j,k)$ and $(i,j)$ 
such that $p>j$ if $q=k$, $p<i$ if $q=j$ or $j\le q\le k$.
If we act such directed edges $(p,q)$ on $\nu'$, we have a permutation $\nu''$ 
such that $\nu''_{i}<\nu'_{i}$ and $\nu''_{j}>\nu''_{i}$ by (D2).
The condition $\nu''_{j}>\nu''_{i}$ implies that we have no directed edge $(i,j)$,
which is a contradiction.
The graph $N(\pi)$ does not contain two directed edges $(i,j)$ and $(j,k)$.
Therefore, $N(\pi)$ satisfies the conditions (A2) and (A3). 

We will show that $N(\pi)$ satisfies the condition (B1).
Suppose that $N(\pi)$ has two crossing directed edges $(i,k)$ and $(j,l)$
with $i<j<k<l$, but not the edge $(j,k)$.
There exist two permutations $\nu$ and $\nu'$ such that 
$\nu\xrightarrow{(j,l)}\nu'$. 
By (D2), we have $(\nu'_{i},\nu'_{j},\nu'_{k},\nu'_{l})=(\nu_{i},\nu_{l},\nu_{k},\nu_{j})$
and $\nu_{i}<\nu_{l}<\nu_{j}$. 
Again by (D2), we have a sequence of directed edges $(p,q)$ between $(j,l)$ and $(i,k)$
such that $l\le q\le k$, $p>j$ if $q=l$, and $p<i$ if $q=k$.
This sequence does not contain the directed edge $(p,q)$ such that $p=k$ 
since $N(\pi)$ satisfies (A2) and (A3) as above and $N(\pi)$ contains 
the directed edge $(i,k)$. 
If we act such directed edges $(p,q)$ on $\nu'$ and obtain a permutation $\nu''$,
we have $\nu''_{i}<\nu'_{i}$ and $\nu''_{k}>\nu'_{k}$. 
Since we act the directed edge $(i,k)$ on $\nu''$, we have $\nu''_{i}>\nu''_{k}$ and 
obtain a new permutation $\mu:=(\mu_i,\mu_j,\mu_k,\mu_l)=(\nu''_{k},\nu''_{j},\nu''_{i},\nu''_{l})$.
By combining this condition $\nu''_{i}>\nu''_{k}$ with $\nu_{i}<\nu_{l}<\nu_{j}$, we have 
$\mu_{i}<\mu_{k}<\mu_{j}$. 
The condition $\mu_{i}<\mu_{k}<\mu_{j}$  and the existence of the edge $(j,l)$
imply that  we have the directed edge $(j,k)$ in
$N(\pi)$. 
As a summary, the graph $N(\pi)$ contains the edge $(j,k)$ if it has two crossing 
edges $(j,l)$ and $(i,k)$. 
This completes the proof.
\end{proof}

Since distinct permutations give distinct networks by $\sigma'$, the map $\sigma'$ is injective.

\begin{proof}[Proof of Theorem \ref{thrm:cardN}]
Since $\sigma$ is injective, we have $|\mathcal{N}(n)|\le |\mathcal{S}_{n}|$. 
Similarly, since $\sigma'$ is injective, we have $|\mathcal{N}(n)|\ge |\mathcal{S}_{n}|$.
From these, we have $|\mathcal{N}(n)|=|\mathcal{S}_{n}|=n!$.
\end{proof}

\begin{remark}
Two remarks are in order:
\begin{enumerate}
\item
The maps $\sigma$ and $\sigma'$ are inverse to each other. 
The operation (D2) implies that we take the larger directed edge first when 
we construct a network $N(\pi)$. 
The order of two edges $(i,j)$ and $(k,l)$ with $j<l$ is $(k,l)<(i,j)$ by (D2).
Suppose $e_1$ and $e_2$ be two edges with the same size. Then, the order of $e_1$ and $e_2$
is $e_1<e_2$ in $N(\pi)$ if $e_1$ is right to $e_2$.
The orders of directed edges for $\sigma(N)$ and $\sigma'(\pi)$ 
are reversed to each other, and equivalent up to commuting edges 
(see Remark \ref{remark:e1e2}).
\item The two maps $\sigma$ and $\sigma'$ depend on the order of directed edges.
As explained in (1), the order of edges $\{e_i\}_{i=1}^{r}$ is compatible 
with the order of edges in (D2). 
If we choose a different order, we have a different correspondence between 
a permutation and a network.  
\end{enumerate}
\end{remark}

\section{Rothe diagrams and networks}
\label{sec:Rothe}
\subsection{A Dyck tiling on a ribbon}
A {\it Dyck path} of size $n$ is a lattice path from $(0,0)$ to $(n,n)$ which never 
goes below the line $y=x$. A Dyck path consists of up and right steps. 
We denote by $U$ (resp. $R$) an up (resp. right) step in the path.
For example, we have five Dyck paths of size $3$:
\begin{align*}
URURUR, \quad URUURR, \quad UURRUR, \quad UURURR, \quad UUURRR.
\end{align*}
 
A {\it ribbon} is a connected skew shape which does not contain a $2\times 2$ rectangle.
A {\it Dyck tile} is a ribbon such that the centers of the unit boxes in the ribbon form
a Dyck path. The size of a Dyck tile is defined to be the size of the Dyck path characterizing
the tile.

Since a Dyck tile is a ribbon, one can consider a tiling of a ribbon by use of Dyck tiles.
This tiling is a special case of the cover-inclusive Dyck tiling studied 
in \cite{KeyWil12,KimMesPanWil14,ShiZinJus12}.
A maximal Dyck tiling on a ribbon is a tiling such that each Dyck tile has a maximal 
size.
Figure \ref{fig:DTr} is an example of the maximal Dyck tiling on a ribbon.
We have three Dyck tiles of size zero , two Dyck tiles whose sizes are $1$ and $2$.

\begin{figure}[ht]
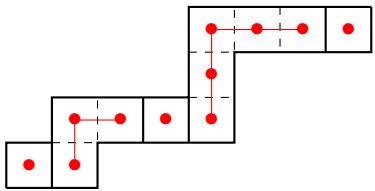

\tikzpic{-0.5}{[scale=0.6]
\draw[thick](0,0)--(0,1)--(1,1)--(1,2)--(4,2)--(4,4)--(8,4);
\draw[thick](0,0)--(2,0)--(2,1)--(5,1)--(5,3)--(8,3)--(8,4);
\draw[thick](1,0)--(1,1)(3,1)--(3,2)(4,1)--(4,2)(7,3)--(7,4);
\draw[dashed](1,1)--(2,1)--(2,2)(4,2)--(5,2)(4,3)--(5,3)--(5,4)(6,4)--(6,3);
\draw[red](0.5,0.5)node{$\bullet$};
\draw[red](1.5,0.5)node{$\bullet$}--(1.5,1.5)node{$\bullet$}--(2.5,1.5)node{$\bullet$};
\draw[red](3.5,1.5)node{$\bullet$};
\draw[red](4.5,1.5)node{$\bullet$}--(4.5,2.5)node{$\bullet$}--(4.5,3.5)node{$\bullet$}
--(5.5,3.5)node{$\bullet$}--(6.5,3.5)node{$\bullet$};
\draw[red](7.5,3.5)node{$\bullet$};
}
\caption{An example of the maximal Dyck tiling on a ribbon. A red line represents a Dyck path which 
characterizes a Dyck tile.}
\label{fig:DTr}
\end{figure}

\subsection{Polyominoes and permutations}
A {\it polyomino} is a diagram consisting of unit squares such that 
two adjacent squares share the same edge.

Let $P$ be a polyomino.
We consider the following three conditions on $P$.
\begin{enumerate}[($\heartsuit$1)]
\item There are no holes in $P$.
\item The heights of the north edges in $P$ are weakly decreasing.
\item The heights of the south edges in $P$ are unimodal, that is, 
we have 
\begin{align*}
s_1\ge s_2\ge\ldots \ge s_{k}\le s_{k+1}\le \ldots \le s_{m},
\end{align*}
where $s_{i}$, $1\le i\le m$, the height of the $i$-th south edge from left.
\end{enumerate}

\begin{defn}
Let $\mathcal{P}$ be the set of polyominoes satisfying the conditions ($\heartsuit$1),
($\heartsuit$2) and  ($\heartsuit$3).
\end{defn}

To connect a polyomino $P$ in $\mathcal{P}$ and a network in $\mathcal{N}(n)$, 
we give a map from a polyomino to a permutation.
We assign positive integers to the east and south edges of $P$ in the following way.
We will obtain a permutation from these integers.
\begin{enumerate}
\item Suppose that an east edge $e$ of $P$ is the east edges of the unit square in the $i$-th row
and $j$-th column. We assign a label $j+x(e)+1$ to this east edge where $x(e)$ is the number of 
east edges which are weakly left and above $e$.  
We write the label right to the east edge. 
If two labels on east edges are in the same column and there is no unit squares of $P$ between them, we move 
the lower label to right by a unit.
\item 
By (1), some south edges in $P$ may have an integer label below them.  
We consider the remaining south edges which have no integer label below them.
Suppose that $E(P)$ be the set of labels assigned to east edges in $P$, 
and a south edge of $P$ is the $i$-th column from left.
We assign the $i$-th smallest element in $\mathbb{Z}_{\ge1}\setminus E(P)$ to this south edge.
We write the label below a south edge.
If two labels on south edges are in the same row, there is an east edge right to the labels, 
and there is no unit squares of $P$ between them, we move the right label downward by a unit.
We do not move further downward if labels are below the lowest south edges of $P$.
\end{enumerate}

We characterize a polyomino $P$ by a permutation which is obtained from the integer labels 
on $P$. 
\begin{defn}
Let $P\in\mathcal{P}$ and $L(P)$ be its labeling.
We define $\alpha:\mathcal{P}\rightarrow\mathcal{S}_{n}$, $P\mapsto\pi$, by 
reading the labels in $L(P)$ from left to right and from top to bottom.
\end{defn}

Figure \ref{fig:polyo} shows an example of polyomino in $\mathcal{P}$ and its labeling.
This polyomino gives the permutation $517\underline{10}264389$.	
\begin{figure}[ht]
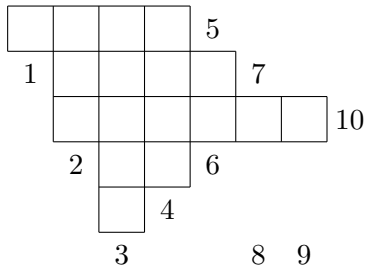

\tikzpic{-0.5}{[scale=0.6]
\draw(0,0)--(4,0)(0,-1)--(5,-1)(1,-2)--(7,-2)(1,-3)--(7,-3)(2,-4)--(4,-4)(2,-5)--(3,-5);
\draw(0,0)--(0,-1)(1,0)--(1,-3)(2,0)--(2,-5)(3,0)--(3,-5)(4,0)--(4,-4)(5,-1)--(5,-3)
(6,-2)--(6,-3)(7,-2)--(7,-3);
\draw(4.5,-0.5)node{$5$}(5.5,-1.5)node{$7$}(7.5,-2.5)node{$10$}(4.5,-3.5)node{$6$}
(3.5,-4.5)node{$4$}(2.5,-5.5)node{$3$}(5.5,-5.5)node{$8$}(6.5,-5.5)node{$9$};
\draw(0.5,-1.5)node{$1$}(1.5,-3.5)node{$2$};
}
\caption{An example of a polyomino in $\mathcal{P}$}
\label{fig:polyo}
\end{figure}

\begin{prop}
\label{prop:alpha}
The map $\alpha$ is well-defined, that is, the word constructed from $L(P)$
is a permutation.
\end{prop}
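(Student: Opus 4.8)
The plan is to show that the multiset of all labels written on $P$ is exactly $\{1,2,\ldots,n\}$, where $n$ is the total number of labelled edges. Since the reading order (left to right, top to bottom) together with the positional ``move'' rules in (1) and (2) only rearranges these values, this is precisely the assertion that the word is a permutation. First I would extract the geometry forced by ($\heartsuit$1)--($\heartsuit$3). These conditions should make $P$ both row-convex and column-convex: a gap in a row or a column would create either an interior local maximum of the top profile (contradicting that the north-edge heights are weakly decreasing), or an interior local minimum of the bottom profile (contradicting unimodality of the south-edge heights), or an enclosed hole (contradicting ($\heartsuit$1)). Consequently each row $i$ is an interval $[L_i,R_i]$ and carries exactly one east edge, the right edge of the cell in column $R_i$; dually each column carries exactly one south edge. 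Writing $T(c),B(c)$ for the top and bottom rows of column $c$, ($\heartsuit$2) says $T$ is weakly increasing and ($\heartsuit$3) says $B$ is unimodal, and from this I would deduce that the sequence $R_1,\ldots,R_m$ ($m$ the number of rows) is unimodal: weakly increasing up to a peak row $p$, then decreasing.

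With this in hand the east label of row $i$ takes the clean form $\ell(i)=R_i+\#\{k\le i:R_k\le R_i\}$; this is exactly $j+x(e)+1$, the $+1$ accounting for the edge itself, which turns the self-excluding count $x(e)$ into the weak count. The key lemma is that $\ell$ is injective. On the ascending part $i\le p$ one has $\#\{k\le i:R_k\le R_i\}=i$, so $\ell(i)=R_i+i$ is strictly increasing; on the descending part $\ell$ is strictly decreasing; and a collision between the two parts is ruled out by showing that each descending value $\ell(i')$ lies strictly between the two consecutive ascending values $\ell(q)$ and $\ell(q+1)$, where $q$ is the last ascending row with $R_q\le R_{i'}$. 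I expect this cross-comparison to be the main obstacle, because it is exactly here that one must use that the descending part is \emph{strictly} decreasing (two stacked east edges below the peak would make $\ell$ fail to be injective); pinning this down is where the structural input of ($\heartsuit$1)--($\heartsuit$3) is really consumed.

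Finally I would identify $n$ and conclude. The maximal east label is attained at the peak and equals $M+p$, where $M$ is the number of columns. On the other hand, the label of the east edge of row $i$ is repositioned below a south edge exactly when the right boundary steps strictly left from row $i-1$ to row $i$; counting these descents shows that the number $s$ of south edges already occupied by an east label equals $m-p$. Hence the number of remaining south edges is $M-(m-p)$ and the total is $n=m+M-s=M+p$. Therefore $E(P):=\{\ell(i)\}\subseteq\{1,\ldots,n\}$ with $\max E(P)=n$, and by construction step (2) assigns to the remaining south edges the smallest available values of $\mathbb{Z}_{\ge1}\setminus E(P)$, namely precisely $\{1,\ldots,n\}\setminus E(P)$. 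Thus the east labels and the south labels together form $\{1,\ldots,n\}$ with no repetition, and reading them off yields a permutation in $\mathcal{S}_{n}$. The one routine verification left is that the rightward and downward shifts in (1) and (2) never merge two labels nor push a value outside this range, which follows from the convexity already established.
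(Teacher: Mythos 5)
Your strategy is genuinely different from the paper's: the paper proves Proposition \ref{prop:alpha} by building a permutation $\pi(P)$ recursively from the row data $I(r_i)$ and then identifying it with the reading word via a standardization argument (Remark \ref{rmk:stand}), whereas you work directly with the closed form of rule (1) and reduce everything to injectivity of the label map plus a counting identity. Your restatement $\ell(i)=R_i+\#\{k\le i:R_k\le R_i\}$ of rule (1) is correct, and the reduction to ``the label multiset equals $\{1,\dots,n\}$'' is sound. But the step you yourself flagged as the main obstacle is a genuine gap, and it cannot be closed from the stated axioms: conditions ($\heartsuit$1)--($\heartsuit$3) do \emph{not} force the descending part of $(R_i)$ to be strictly decreasing. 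Consider the polyomino whose rows, from top to bottom, occupy columns $[1,3],[1,3],[1,2],[1,2]$ (a $2\times 3$ rectangle stacked on a $2\times 2$ rectangle, left-justified). It has no holes; all north edges are at the same height, so ($\heartsuit$2) holds; and the south-edge heights of columns $1,2,3$ are $-4,-4,-2$, which satisfy the unimodality of ($\heartsuit$3). Here $(R_i)=(3,3,2,2)$, so the tie $R_3=R_4=2$ sits strictly below the peak value $3$, and rule (1) gives east labels $\ell(1)=3+1=4$, $\ell(2)=3+2=5$, $\ell(3)=2+1=3$, $\ell(4)=2+2=4$: the value $4$ repeats, $\ell$ is not injective, and the resulting word $4,5,3,4,1,2$ is not a permutation. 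Ties in $(R_i)$ are harmless only when they occur at the running maximum (e.g.\ a $2\times 2$ square, with word $3,4,1,2$), and nothing in ($\heartsuit$1)--($\heartsuit$3) excludes ties below the peak, so your planned cross-comparison lemma is false for $\mathcal{P}$ as axiomatized.

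You should also know that this is not merely a defect of your route: on the same shape the paper's recursive construction yields the permutation $\pi(P)=453612$, which disagrees with the rule-(1) word $4,5,3,4,1,2$, so the coincidence $\alpha(P)=\pi(P)$ asserted as ``routine to check'' in the paper's proof breaks down there as well. In other words, the strictness you need is an additional hypothesis on the class $\mathcal{P}$ (for instance: two rows may share a right end only while the right boundary is at its running maximum, i.e.\ no column strictly to the left of a previously attained right end carries two east edges), not a consequence of ($\heartsuit$1)--($\heartsuit$3). Under such a hypothesis your argument does appear to go through: the ascending values $\ell(i)=R_i+i$ are strictly increasing, strict descent of $R$ makes $\ell$ strictly decreasing afterwards, the interleaving argument separates the two parts, and your count $n=m+M-s=M+p$ correctly identifies the maximum label, after which step (2) fills in exactly the complement. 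So your instinct located the crux precisely; the error is the claim that the structural input of ($\heartsuit$1)--($\heartsuit$3) suffices to deliver it.
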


To prove Proposition \ref{prop:alpha}, we introduce another recursive construction of 
a permutation from a polyomino $P$.
Since $P$ is connected, we enumerate the rows by $1,\ldots,m$ from bottom to top, 
and denote by $r_{i}$ the $i$-th row.
We denote by $l(i):=|r_{i}|$ the number of boxes in the $i$-th row.
Similarly, $l_{L}(i)$ is defined to be the number of boxes in the $i$-th row which 
are left to the left-most box in the $i-1$-th row for $i\ge2$.
We define $l_{L}(1):=l(i)$.
We define a sequence $I(r_{i})$ of integers by 
\begin{align*}
I(r_{i}):=
\begin{cases}
(l(i)+1,1,2,\ldots,l_{L}(i)), & \text{ if } l_{L}(i)>0, \\
(l(i)+1), & \text{ otherwise}.
\end{cases}
\end{align*} 
We will construct a permutation $\pi(P)$ from the collection $\{I(r_i) : 1\le i\le m \}$ of 
the sequences as follows.
\begin{enumerate}
\item Set $i=1$ and $\pi^{(1)}:=I(r_{1})$ in the one-line notation.

\item Let $\pi_{j}$ be the $j$-th element in $\pi^{(i)}$. 
Let $\nu_{j}$ is the $\pi_{j}$-th smallest integer in $\mathbb{Z}_{\ge1}\setminus I(r_{i+1})$.
Then we define $\nu:=(\nu_1,\ldots,\nu_{l})$ where $l$ is the length of $\pi^{(i)}$.
We concatenate $I(r_{i+1})$ and $\nu$ from left to right and denote it by $\nu'$.

\item Suppose that $n=\max \nu'$. If the length $\nu'$ is equal to $n$, we define 
$\pi^{(i+1)}:=\nu'$. Otherwise, define an increasing sequence $\nu''$ consisting of elements 
in $[1,n]\setminus \nu'$.
Then, we define $\pi^{(i+1)}:=\nu'\circ\nu''$, that is, $\pi^{(i+1)}$ is the permutation 
obtained from $\nu'$ by appending $\nu''$ to the right of $\nu'$.
\item Increase $i$ by one, and go to (2). Go to (5) if $i=m$.
\item Define a permutation $\pi(P):=\pi^{(m)}$.
\end{enumerate}

\begin{example}
\label{ex:polyperm}
Consider the polyomino $P$ in Figure \ref{fig:polyo}.
We have five rows in $P$ and the sequences $I(r_{i})$ are given by 
\begin{align*}
I(r_1)=(2,1), \qquad I(r_2)=(3), \qquad I(r_3)=(7,1), \qquad I(r_4)=(5), \qquad I(r_5)=(5,1).
\end{align*}
Then, we have a sequence of permutations 
\begin{align*}
21\xrightarrow{I(r_2)}321\xrightarrow{I(r_3)}7143256\xrightarrow{I(r_4)}
58143267\xrightarrow{I(r_5)}517\underline{10}264389.
\end{align*}
As a summary, we have the permutation $\pi(P)=517\underline{10}264389$.
\end{example}

\begin{remark}
\label{rmk:stand}
The permutation $\pi^{(i)}$ corresponds to the standardization of the 
reading word of $P$ such that it starts from the label $l$ on the east edge in the $i$-th row and 
all the labels are left to or below the label $l$. 
As for Example \ref{ex:polyperm}, we have the following correspondence. 
\begin{align*}
21\leftrightarrow 43 \qquad 321\leftrightarrow643 \qquad 
7143256\leftrightarrow\underline{10}264389\qquad
58143267\leftrightarrow7\underline{10}264389	
\end{align*}
\end{remark}

\begin{proof}[Proof of Proposition \ref{prop:alpha}]
By recursive construction of an integer sequence $\pi(P)$, it is obvious that 
$\pi(P)$ is a permutation.
From Remark \ref{rmk:stand}, 
it is a routine to check that the word constructed from $L(P)$ coincides with the 
permutation $\pi(P)$, which implies that $\alpha(P)=\pi(P)$. Hence, $\alpha(P)$ 
is a permutation.
\end{proof}

\subsection{Polyominoes and networks}
Let $P\in\mathcal{P}$ be a polyomino and $\pi:=\alpha(P)$ be the permutation corresponding 
to $P$.
Recall that we have a bijection between a permutation $\pi^{-1}$ and 
a network $N(\pi^{-1})$.
Therefore, we have a correspondence between the polyomino $P$ and 
the network $N(\pi^{-1})$.
Below, we will construct a network $N'$ by giving a set of 
directed edges from a polyomino and show that $N'$ coincides with $N(\pi^{-1})$.
  
Given a polyomino $P\in\mathcal{P}$ with its labeling, 
let $c_r$ be the unit square in the $y(c_r)$-th row of $P$, whose east edge has the maximal label $n$, 
and $c_{l}$ be the unit square such that the south edge of it is lowest and it has a minimal 
label.  
We consider a ribbon $\mathtt{Rib}(P)$ from $c_{r}$ to $c_{l}$ by taking the unit squares of $P$ 
along the boundary of $P$.
Then, we consider the maximal Dyck tiling of the ribbon $\mathtt{Rib}(P)$.
Let $d_{i}$, $1\le i\le m$, be the Dyck tiles in the maximal Dyck tiling of $\mathtt{Rib}(P)$.
Since a Dyck tile is characterized by a Dyck path, $d_{i}$ has a unique south-most edge.
Let $l_{i}$ be the label of the unique south-most edge in $d_{i}$, and 
$l_{\min}$ be the minimal label among them.
Let $L_{\downarrow}$ be the set of labels left to $l_{\min}$ and strictly below $n$.
Then, we consider a set of directed edges $\mathcal{E}'(P)$:
\begin{align*}
\mathcal{E}'(P):=\{(l_i,n) : 1\le i\le m\}\cup \{(i,n): i\in L_{\downarrow}\}.
\end{align*}

We construct a smaller polyomino $P_1$ from $P$ as follows.
We first delete all unit squares in $\mathtt{Rib}(P)$ from $P$.
Then, we delete a unit square whose south edge has a label in $L_{\downarrow}$. 
Finally, we move the unit squares weakly below the $y(c_r)$-th row rightward by a unit. 
We define $P_1$ by the new polyomino obtained from $P$.
We write 
\begin{align}
\label{eq:seqpolyo}
P=P_{0}\rightarrow P_1\rightarrow P_2\rightarrow \ldots \rightarrow P_{q}\rightarrow P_{q+1}=\emptyset,
\end{align}
if $P_{i+1}$ is obtained from $P_{i}$ by the operation as above. 
Note that we arrive at the empty polyomino since we continue to delete unit squares. 
Then, we define the set $\mathcal{E}(P)$ of directed edges by
\begin{align*}
\mathcal{E}(P):=\bigcup_{i=0}^{q}\mathcal{E}'(P_{i}).
\end{align*}
The network $N(P)$ corresponding to $P$ is obtained from the 
set $\mathcal{E}(P)$ of directed edges.	

\begin{theorem}
\label{thrm:polyo}
Let $P\in\mathcal{P}$, $\pi=\alpha(P)$, and $\mathcal{E}(P)$ be as above.
Then, the network $N(P)$ given by $\mathcal{E}(P)$ coincides with 
the network $N(\pi^{-1})$, that is, we have 
\begin{align*}
N(P)=N(\pi^{-1}).
\end{align*}
\end{theorem}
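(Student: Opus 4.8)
The plan is to prove the identity by induction on the number of unit squares of $P$, exploiting that both constructions are recursive and both single out the same point first. On the network side, the map $\sigma'$ builds $N(\pi^{-1})$ by the algorithm (D1)--(D2), which first removes all edges incident to the largest point $n$ and then recurses on a permutation of $[1,n-1]$. On the polyomino side, the decomposition (\ref{eq:seqpolyo}) first extracts the ribbon $\mathtt{Rib}(P_0)$, producing the edge set $\mathcal{E}'(P_0)=\{(l_i,n):1\le i\le m\}\cup\{(i,n):i\in L_{\downarrow}\}$, all of whose edges end at the maximal label $n$, and then recurses on the smaller polyomino $P_1$. Since $\alpha(P)$ is a permutation of $[1,n]$ with $n$ equal to the maximal label of $P$, and since $\mathtt{Rib}(P_0)$ always contains at least one Dyck tile, the point $n$ is a sink in both $N(P)$ and $N(\pi^{-1})$; hence it is legitimate to compare the two constructions point $n$ first. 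The base case ($P=\emptyset$) is immediate.

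For the inductive step I would first prove that the set of edges incident to $n$ in $N(\pi^{-1})$ equals $\mathcal{E}'(P_0)$. This is the crux, and amounts to a dictionary between the maximal Dyck tiling of $\mathtt{Rib}(P)$ and the selection rule (D2) applied to $\pi^{-1}$. Concretely, I would translate (D2) into a statement about $\pi^{-1}$: the sources $j$ of the edges $(j,n)$ are obtained by repeatedly choosing the minimal position $j$ lying to the right of an initial run of values smaller than the current value at position $n$. Using Remark \ref{rmk:stand} and the recursive description of $\pi(P)=\alpha(P)$ through the sequences $I(r_i)$, I would show that these positions are exactly the labels $l_i$ of the south-most edges of the Dyck tiles, while the ``skips'' produced by a Dyck tile of positive size $s$ correspond precisely to the $s$ consecutive values that (D2) passes over; the labels in $L_{\downarrow}$ account for the remaining sources lying to the left of $l_{\min}$ and below $n$. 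The maximality of the Dyck tiling is what forces the one-to-one match with the \emph{minimal}-$j$ rule of (D2).

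Next I would show that the objects obtained after peeling point $n$ on each side agree: the permutation on $[1,n-1]$ produced by $\sigma'$ after removing all edges $(j,n)$ coincides with $\alpha(P_1)^{-1}$. Here one checks that deleting $\mathtt{Rib}(P)$ together with the cells whose south labels lie in $L_{\downarrow}$, and then shifting the lower rows rightward, enacts on the level of the labeling exactly the composition of transpositions $(j,n)$ (which reorder $\pi^{-1}$) followed by deletion of the last entry. I would verify this by tracking the effect on the sequences $I(r_i)$ in the construction of $\pi(P_1)$, so that $P\mapsto P_1$ mirrors $\pi^{-1}\mapsto\alpha(P_1)^{-1}$. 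Granting this, the inductive hypothesis applies to $P_1$ and gives that $\bigcup_{i\ge1}\mathcal{E}'(P_i)$ is the edge set of $N(\alpha(P_1)^{-1})$, that is, the set of edges of $N(\pi^{-1})$ not incident to $n$; combined with the previous paragraph this yields that $\mathcal{E}(P)$ equals the edge set of $N(\pi^{-1})$, and hence $N(P)=N(\pi^{-1})$.

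The main obstacle is the dictionary in the second paragraph: proving rigorously that the geometric maximal Dyck tiling encodes the arithmetic minimal-$j$ choices of (D2), and in particular that a Dyck tile of size $s$ corresponds to exactly $s$ values skipped by (D2). This is where the hypotheses ($\heartsuit$1)--($\heartsuit$3) on $P$ are essential, since they guarantee that the south-edge heights are unimodal and the north-edge heights weakly decreasing, which is precisely what makes the ribbon $\mathtt{Rib}(P)$ well-defined and its Dyck tiling compatible with the run structure of $\pi^{-1}$.
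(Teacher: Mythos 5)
Your proposal follows essentially the same route as the paper's own proof: the paper likewise identifies each ribbon-extraction step $\mathcal{E}'(P_i)$ with the set of edges produced by (D2) applied to $\pi_i^{-1}$ (observing that Dyck tiles of positive size correspond to decreasing runs in $\pi_i^{-1}$, and that starting the ribbon at the maximal label matches the (D2) rule), and then iterates over the sequence $P_0\rightarrow P_1\rightarrow\cdots$. Your explicit induction on the number of cells, with the Dyck-tile/(D2) dictionary isolated as the key lemma, is a more formally organized version of the same argument, which the paper itself leaves at the level of ``it is a routine to check.''
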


To prove Theorem \ref{thrm:polyo}, we first show that 
$\mathcal{E}(P)$ gives a network.
We will show that the set $\mathcal{E}(P)$ of directed edges satisfies 
the conditions from (A1) to (A4) in Lemma \ref{lemma:polyoA1A4}, 
and the condition (B1) in Lemma \ref{lemma:polyoB2}.

\begin{lemma}
\label{lemma:polyoA1A4}
Let $\mathcal{E}(P)$ be the set of directed edges as above.
Then, $\mathcal{E}(P)$ satisfies the four conditions from (A1) to (A4).
\end{lemma}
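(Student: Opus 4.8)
The plan is to exploit the layered structure of the reduction~\eqref{eq:seqpolyo} and argue by induction on the number of cells of $P$ (equivalently, on the maximal label $n$). The first point to record is that each batch $\mathcal{E}'(P_i)$ is a star directed into one sink: by construction every edge of $\mathcal{E}'(P_i)$ has the form $(s,n)$, where $n$ is the maximal label of $P_i$, which by the choice of $c_r$ labels an east edge, and the sources $s$ range over the south-most Dyck-tile labels $l_1,\dots,l_m$ together with the labels of $L_{\downarrow}$. Moreover, passing from $P_i$ to $P_{i+1}$ deletes $c_r$ and the whole ribbon $\mathtt{Rib}(P_i)$, so the label $n$ disappears and every label surviving into $P_{i+1}$ is strictly smaller; hence the sinks produced along~\eqref{eq:seqpolyo} are strictly decreasing. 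This yields the inductive set-up $\mathcal{E}(P)=\mathcal{E}'(P_0)\cup\mathcal{E}(P_1)$, in which $\mathcal{E}(P_1)$ is built on labels strictly below the global maximum and, by the induction hypothesis, already satisfies (A1)--(A4).

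With this in hand, (A1) is quick. Every source of $\mathcal{E}'(P_0)$ is the label of a south edge, hence different from the maximal east-edge label $n$ and therefore strictly below it, which is exactly the orientation $s<n$ required by (A1). There is no repeated edge: within a batch the two source families $\{l_1,\dots,l_m\}$ and $L_{\downarrow}$ are disjoint, since $L_{\downarrow}$ collects labels lying strictly to the left of $l_{\min}$ while the $l_i$ consist of $l_{\min}$ and labels to its right, and edges produced in different layers have different sinks. The ``at least one outgoing edge'' and ``at least one incoming edge'' clauses of (A2)--(A3) hold by the very definitions of source and sink, and (A4) merely names the points carrying no edge, so nothing further is needed for these beyond the disjointness discussed next.

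The genuine content, and the step I expect to be the main obstacle, is the remaining part of (A2)--(A3): that no point carries both an incoming and an outgoing edge, i.e.\ that the set of sinks is disjoint from the set of sources. The dichotomy I would establish is that every sink is a \emph{maximal} label, which the labeling places on an \emph{east} edge, whereas every source selected at any stage is the label of a \emph{south} edge; since the labeling assigns each value to exactly one edge, east-edge and south-edge labels form disjoint classes, and hence sources and sinks cannot overlap. Within a single layer this is immediate, because the star's sink is the east-edge maximum while its sources are the south-most Dyck-tile edges and the south edges indexed by $L_{\downarrow}$. The delicate point is to transport the dichotomy across layers: a value used as a source at one stage may legitimately be \emph{reused} as a source at later stages (this is why the points $1$ and $2$ can each emit two edges in the rightmost network of Figure~\ref{fig:n=4}), so the argument cannot be that sources are destroyed. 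Instead I would carry, as an invariant along~\eqref{eq:seqpolyo}, the two statements that $P_{i+1}\in\mathcal{P}$ and that the deletion-and-shift operation is compatible with the labelings, preserving the east-edge/south-edge type of each surviving label. Granting this invariant, a sink is always an east-edge maximum and a source always a south-edge label in a common value system, so the two sets stay disjoint and the induction goes through. Verifying that the type is preserved under the rightward shift of the rows weakly below the $y(c_r)$-th row is where the real work lies, since that shift is precisely what alters the column indices on which the east-edge labels $j+x(e)+1$ depend, and one must check that it never turns a south-edge source into an east-edge maximum or vice versa.
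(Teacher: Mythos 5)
Your strategy is the same as the paper's: decompose $\mathcal{E}(P)$ along \eqref{eq:seqpolyo} into stars $\mathcal{E}'(P_i)$ each directed into the single sink $n_i$, observe that the sinks strictly decrease, treat (A1) and (A4) as immediate, and reduce (A2)--(A3) to the claim that no label is simultaneously a sink and a source (the paper's condition ($\star$)), which you propose to get from the dichotomy that sinks are east-edge labels while sources (the tile labels $l_i$ and the elements of $L_{\downarrow}$) are south-edge labels. Up to this point the two arguments run in parallel, and your treatment of (A1), of the disjointness of $\{l_1,\dots,l_m\}$ and $L_{\downarrow}$ within a batch, and of distinct sinks across batches is fine.

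The genuine gap is that you reduce the whole lemma to the invariant that the deletion-and-shift operation preserves the east-edge/south-edge type of every surviving label (together with $P_{i+1}\in\mathcal{P}$), and then explicitly decline to prove it (``where the real work lies''). That invariant \emph{is} the content of the lemma: without it, nothing rules out a value that served as the sink $n_i$ reappearing as a south-edge label, hence a source, in some later $P_{i'}$, which is exactly the configuration ($\star$) forbids. The paper closes this step with a positional argument that you do not supply: by ($\heartsuit$3) the heights of the south edges are unimodal, and the ribbon $\mathtt{Rib}(P_i)$ is taken starting from the cell $c_r$ carrying the maximal label $n_i$, so deleting it forces the new maximum $n_{i+1}$ to be the label of an east edge lying \emph{above} the label $n_i$ and maximal among the remaining ones; iterating, the decreasing chain of sinks consists entirely of east-edge labels of the original $P$, which therefore never occur among the south-edge labels feeding the sources at any stage. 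Note that this sidesteps the relabeling issue you worry about: one never needs to track the rule $j+x(e)+1$ through the rightward shift, only where the successive maxima sit relative to the unimodal south boundary. If you replace your unverified ``type preservation under the shift'' with this unimodality-plus-ribbon argument, your induction closes; as written, the pivotal step is asserted rather than established.
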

\begin{proof}
It is obvious that the conditions (A1) and (A4) are satisfied.
We will show that $\mathcal{E}(P)$ satisfies the conditions (A2) and (A3).
For this, it is enough to show the following equivalent condition.
\begin{enumerate}
\item[($\star$)] There is no triplet $i<j<k$ such that $(i,j),(j,k)\in\mathcal{E}(P)$.
\end{enumerate}
By definition of $\mathcal{E}'(P_{i})$, a label of a sink comes from 
a label on the east edges in $P$.
Let $n_{i}$ be the maximal label in a polyomino $P_{i}$. 
When we construct $P_{i+1}$ from $P_i$, we delete the ribbon in $P_{i}$ and 
this implies that $n_{i}>n_{i+1}$ for all $i$.
Note that the heights of the south edges are unimodal, 
and that we take a ribbon in $P_{i}$ to consider a maximal Dyck tiling.
These imply that $n_{i+1}$ is the label of an east edge which is above
the label $n_{i}$ and maximal.
Therefore, the construction of $\mathcal{E}(P)$ guarantees that
the labels $n_{j}$, $j>i+1$, do not appear as a label of source. 
This means that $\mathcal{E}(P)$ satisfies the condition ($\star$). 
\end{proof}

\begin{lemma}
\label{lemma:polyoB2}
The set $\mathcal{E}(P)$ of directed edges satisfies the condition (B1).
\end{lemma}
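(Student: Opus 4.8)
The plan is to exploit the star structure of $\mathcal{E}(P)$ already extracted in the proof of Lemma~\ref{lemma:polyoA1A4}. Write $n_i$ for the maximal label of $P_i$, so that the sinks satisfy $n_0>n_1>\cdots>n_q$ and every edge of $\mathcal{E}'(P_i)$ points into the common sink $n_i$; denote by $S(n_i)$ its set of sources. Now suppose $(i,k),(j,l)\in\mathcal{E}(P)$ with $i<j<k<l$. Both $k$ and $l$ are sinks, say $k=n_a$ and $l=n_b$; since $n_a=k<l=n_b$ and the sequence of sinks is strictly decreasing, we have $a>b$, so the edge $(j,l)$ is created at the earlier stage $b$ and $(i,k)$ at the later stage $a$ in the sequence \eqref{eq:seqpolyo}. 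Condition (B1) then amounts to the single closure statement: if $i\in S(n_a)$ and $j\in S(n_b)$ with $b<a$ and $i<j<n_a$, then $j\in S(n_a)$.

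I would prove this closure by induction on the number $q$ of reduction steps, using the splitting $\mathcal{E}(P)=\mathcal{E}'(P_0)\sqcup\mathcal{E}(P_1)$ (these edge sets are disjoint because $n_0>n_i$ for $i\ge 1$, and $\mathcal{E}(P_1)=\bigcup_{i\ge 1}\mathcal{E}'(P_i)$). By the induction hypothesis $\mathcal{E}(P_1)$ already satisfies (B1), and because $n_0$ is the global maximum the only new instances of the closure involve $l=n_0$; thus it suffices to show that whenever $(i,k)\in\mathcal{E}(P_1)$ and $(j,n_0)\in\mathcal{E}'(P_0)$ with $i<j<k<n_0$, the edge $(j,k)$ already lies in $\mathcal{E}(P_1)$. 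In other words, a source $j$ of the top sink $n_0$ that is squeezed between a source $i$ and the sink $k$ of a lower edge must itself be a source of $k$ in the reduced polyomino $P_1$.

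This last statement is where the geometry of $\mathcal{P}$ enters. I would read off $S(n_0)=\{l_1,\dots,l_m\}\cup L_{\downarrow}$ directly from the ribbon $\mathtt{Rib}(P)$ and its maximal Dyck tiling: the labels $l_s$ are the south-most-edge labels of the Dyck tiles, and $L_{\downarrow}$ is the block of south labels lying left of $l_{\min}$ and below $n_0$. The conditions ($\heartsuit$2) (north edges weakly decreasing) and ($\heartsuit$3) (south edges unimodal) force the columns carrying these source labels to sit in a predictable left-to-right order, and they control how the rightward shift below row $y(c_r)$ in the construction of $P_1$ repositions those columns. The key point to verify is that the deletion of $\mathtt{Rib}(P)$ together with the shift turns the south edge carrying label $j$ into a south edge of $P_1$ that still connects to the sink $k$: here one uses that $i<j$ with $i$ a source of $k$ in $P_1$ pins down the relative column positions, and unimodality of the south heights guarantees that no column of $P_1$ strictly between those of $i$ and $k$ can be removed without also removing the column of $j$.

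The main obstacle is precisely this bookkeeping of labels across the combined delete-and-shift operation: one must track which south edges survive into $P_1$, how their labels are inherited, and show that the source $j$ reappears as a source of the intermediate sink $k$ exactly when the crossing hypothesis $i<j<k$ holds. Once the persistence of such a source $j$ is established from the unimodality in ($\heartsuit$3) and the monotonicity in ($\heartsuit$2), the induction closes and (B1) follows. I expect the ribbon/Dyck-tiling description of $S(n_0)$ and this monotonicity to do all the real work, while the reduction to the closure statement and the induction on $q$ are formal.
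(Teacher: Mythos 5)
Your proposal follows essentially the same route as the paper's proof: the paper likewise observes that the larger sink $l$ arises at an earlier stage $P_r$ and the smaller sink $k$ at a later stage $P_{r'}$, tracks the south-edge label $j$ through the delete-and-shift reductions (the labels of south edges are simply moved upward, so $j$ survives as a south-edge label weakly below and left of $k$), and then uses $(i,k)\in\mathcal{E}'(P_{r'})$ with $i<j<k$ to force $j$ to be the south-edge label of a Dyck tile in $\mathtt{Rib}(P_{r'})$, giving $(j,k)\in\mathcal{E}'(P_{r'})\subseteq\mathcal{E}(P)$. The ``key point'' you defer is precisely this persistence-and-squeeze argument, which the paper settles by the same appeal to ($\heartsuit$2) and ($\heartsuit$3), and your induction on the number of reduction steps is a repackaging of the paper's direct argument along the sequence $P_{r}\rightarrow\cdots\rightarrow P_{r'}$.
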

\begin{proof}
Suppose that $(i,k),(j,l)\in\mathcal{E}(P)$ for $i<j<k<l$. 
To show the condition (B1) is equivalent to show $(j,k)\in\mathcal{E}(P)$.
The proof of Lemma \ref{lemma:polyoA1A4} implies that 
$k$ and $l$ are the labels of east edges in $P$, and $k$ is above $l$ in $P$.
Let $l$ and $k$ be the maximal label in $P_{r}$ and $P_{r'}$ for some $r$ and 
$r'$ satisfying $r<r'$ respectively.
Then, by a definition of $\mathcal{E}'(P_r)$, $j$ is a label of a south edge 
in $P_r$ and $i$ is a label of a south edge in $P_{r'}$.
Further, in $P_r$, $i$ is left to and above $j$ since the heights of south edges
are unimodal and $j$ is strictly left to $k$.
We consider a sequence of polyominoes
\begin{align*}
P_{r}\rightarrow P_{r+1}\rightarrow \ldots \rightarrow P_{r'}.
\end{align*}
To obtain $P_{t+1}$ from $P_{t}$ for $r\le t\le r'-1$, we delete the ribbon and some unit squares in $P_{t}$.
By this operation, the labels of the south edges in $P_{t}$ are simply moved upward until 
they become labels of the south edges in $P_{t+1}$.
Since $j$ is a label of a south edge in $P_r$, and $j$ is left to $k$, 
$j$ is again a label of a south edge in $P_{r'}$.
The facts that $j$ is a label of a south edge in $P_r$ and $(j,l)\in\mathcal{E}'(P_r)$ imply 
that $j$ is weakly below and left to the label $k$ in $P_{r'}$.
Further, $(i,k)\in\mathcal{E}'(P_{r'})$ implies that $j$ is a label of the south edge of 
a Dyck tile in $P_{r'}$, which insures that $(j,k)\in\mathcal{E}'(P_{r'})\subseteq\mathcal{E}(P)$.
This completes the proof.
\end{proof}

\begin{proof}[Proof of Theorem \ref{thrm:polyo}]
From Lemmas \ref{lemma:polyoA1A4} and \ref{lemma:polyoB2}, the set $\mathcal{E}(P)$ of 
directed edges gives a network $N(P)$ which satisfies the conditions from (A1) to (A4)
and (B1).
We show that $N(P)=N(\pi^{-1})$.
Let $P_i$ is a polyomino in Eq. (\ref{eq:seqpolyo}), and $\pi_{i}$ be a permutation 
corresponding to $P_{i}$.
In $P_{i}$, the labels of south edges are increasing from left to right.
If the ribbon $\mathtt{Rib}(P_{i})$ contains a Dyck tile $D$ whose size is not zero,
the labels of the south edges in the boundary of $D$ are also increasing. 
Note that these labels are above the label of the south edge of $D$.
Since we obtain a permutation by reading the labels in $P_{i}$ from left to right and 
top to bottom, the existence of the Dyck tile $D$ implies that 
we have a decreasing sequence in $\pi_{i}^{-1}$.
Recall that the map $\sigma'$ on $\pi_{i}^{-1}$ gives the set of directed edges by (D1) and (D2).
Since we consider the ribbon starting from the cell with the maximal label in $P_{i}$,
this corresponds to considering the case (D2) for $\pi_{i}^{-1}$.
Therefore, the case (D2) is compatible with considering the set $\mathcal{E}'(P_{i})$.
As a summary, the set of directed edges obtained from $\pi_{i}^{-1}$ by (D2) is the same 
as $\mathcal{E}'(P_{i})$. This means that $\mathcal{E}(P)$ coincides with the set of 
directed edges for $N(\pi^{-1})$. We have $N(P)=N(\pi^{-1})$.
\end{proof}

\begin{example}
We consider the polyomino $P$ in Figure \ref{fig:polyo}.
We have the following sequence of polyominoes: 
\begin{figure}[ht]
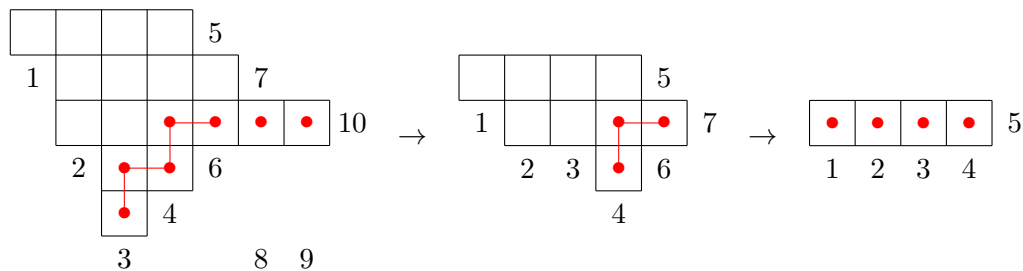

\tikzpic{-0.5}{[scale=0.6]
\draw(0,0)--(4,0)(0,-1)--(5,-1)(1,-2)--(7,-2)(1,-3)--(7,-3)(2,-4)--(4,-4)(2,-5)--(3,-5);
\draw(0,0)--(0,-1)(1,0)--(1,-3)(2,0)--(2,-5)(3,0)--(3,-5)(4,0)--(4,-4)(5,-1)--(5,-3)
(6,-2)--(6,-3)(7,-2)--(7,-3);
\draw(4.5,-0.5)node{$5$}(5.5,-1.5)node{$7$}(7.5,-2.5)node{$10$}(4.5,-3.5)node{$6$}
(3.5,-4.5)node{$4$}(2.5,-5.5)node{$3$}(5.5,-5.5)node{$8$}(6.5,-5.5)node{$9$};
\draw(0.5,-1.5)node{$1$}(1.5,-3.5)node{$2$};
\draw[red](2.5,-4.5)node{$\bullet$}--(2.5,-3.5)node{$\bullet$}--(3.5,-3.5)node{$\bullet$}
--(3.5,-2.5)node{$\bullet$}--(4.5,-2.5)node{$\bullet$};
\draw[red](5.5,-2.5)node{$\bullet$}(6.5,-2.5)node{$\bullet$};
}
$\rightarrow$
\tikzpic{-0.5}{[scale=0.6]
\draw(0,0)--(4,0)(0,-1)--(5,-1)(1,-2)--(5,-2)(3,-3)--(4,-3);
\draw(0,0)--(0,-1)(1,0)--(1,-2)(2,0)--(2,-2)(3,0)--(3,-3)(4,0)--(4,-3)(5,-1)--(5,-2);
\draw(4.5,-0.5)node{$5$}(5.5,-1.5)node{$7$}(4.5,-2.5)node{$6$};
\draw(0.5,-1.5)node{$1$}(1.5,-2.5)node{$2$}(2.5,-2.5)node{$3$}(3.5,-3.5)node{$4$};
\draw[red](3.5,-2.5)node{$\bullet$}--(3.5,-1.5)node{$\bullet$}--(4.5,-1.5)node{$\bullet$};
}
$\rightarrow$
\tikzpic{-0.5}{[scale=0.6]
\draw(0,0)--(4,0)(0,-1)--(4,-1);
\draw(0,0)--(0,-1)(1,0)--(1,-1)(2,0)--(2,-1)(3,0)--(3,-1)(4,0)--(4,-1);
\draw(4.5,-0.5)node{$5$}(0.5,-1.5)node{$1$}(1.5,-1.5)node{$2$}(2.5,-1.5)node{$3$}(3.5,-1.5)node{$4$};
\draw[red](0.5,-0.5)node{$\bullet$}(1.5,-0.5)node{$\bullet$}(2.5,-0.5)node{$\bullet$}(3.5,-0.5)node{$\bullet$};
}
\caption{A sequence of polyominoes.}
\label{fig:red}
\end{figure}
From the left polyomino $P_{0}$, we obtain the set 
$\mathcal{E}'(P_{0})=\{(2,10), (3,10),(8,10),(9,10)\}$.
From the middle polyomino $P_1$, we obtain the set 
$\mathcal{E}'(P_1)=\{(2,7),(3,7),(4,7)\}$.
From the right polyomino $P_2$, we obtain the set 
$\mathcal{E}'(P_2)=\{(1,5),(2,5),(3,5),(4,5)\}$.  
The network $N(P)$ is characterized by the set of directed edges
$\mathcal{E}(P)=\mathcal{E}'(P_{0})\cup\mathcal{E}'(P_{1})\cup\mathcal{E}'(P_{2})$.

The polyomino $P$ gives the permutation $\pi=517\underline{10}264389$.
Then, the inverse permutation is $\pi^{-1}=25871639\underline{10}4$.
It is easy to see that the network $N(\pi^{-1})$ gives the same set of 
directed edges as $\mathcal{E}(P)$. 
\end{example}

\subsection{Rothe diagrams and networks}
Let $\pi$ be a permutation in $\mathcal{S}_{n}$.
The diagram, called the {\it Rothe diagram}, is defined as the 
set of unit boxes:
\begin{align*}
D(\pi):=\{(i,j)| 1\le i,j\le n, \pi(i)>j, \pi^{-1}(j)>i\}.
\end{align*}
Note that the Rothe diagram $D(\pi^{-1})$ is the transposed 
diagram of $D(\pi)$. 
Here, transposition means that we exchange rows and columns in $D(\pi)$.
When $\pi=\pi_1\ldots\pi_{n}$, we write an integer $\pi_{i}$ in the $i$-th 
row and $\pi_{i}$-th column in $D(\pi)$. 
For example, the Rothe diagram for $263514$ is given in Figure \ref{fig:DTRothe}.
\begin{figure}[ht]
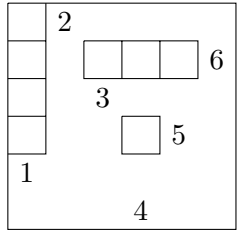

\tikzpic{-0.5}{[scale=0.5]
\draw(0,0)--(6,0)--(6,-6)--(0,-6)--(0,0);
\draw(1,0)--(1,-4)--(0,-4)(1,-1)--(0,-1)(1,-2)--(0,-2)(1,-3)--(0,-3);
\draw(2,-1)--(5,-1)--(5,-2)--(2,-2)--(2,-1)(3,-1)--(3,-2)(4,-1)--(4,-2);
\draw(3,-3)--(4,-3)--(4,-4)--(3,-4)--(3,-3);
\draw(1.5,-0.5)node{$2$}(5.5,-1.5)node{$6$}(2.5,-2.5)node{$3$}(4.5,-3.5)node{$5$}
(0.5,-4.5)node{$1$}(3.5,-5.5)node{$4$};
}
\caption{The Rothe diagram for $263514$.}
\label{fig:DTRothe}
\end{figure}
In general, a Rothe diagram consists of several connected components.
We glue these connected components into a larger connected component 
by keeping the connectivity of squares. 
Here, connectivity of two squares means that one square is below or right to 
another square.
Even if we glue components, we may have several larger connected components.

For example, the Rothe diagram $D(263514)$ in Figure \ref{fig:DTRothe} has 
three connected components. 
Let $C_1$, $C_{2}$ and $C_{3}$ be the connected components consisting of 
four, three and one squares respectively.
By moving $C_2$ and $C_{3}$ rightward by one unit, we can glue $C_1$ and $C_{2}$
into a larger component $C_{1\cup2}$. However, we cannot glue $C_{3}$ 
and $C_{1\cup2}$ since if we move horizontally or vertically $C_{3}$ by one unit, 
we have to change the connectivity of squares.
As a consequence, we have two components $C_{1\cup2}$ and $C_{3}$. 
 
\begin{defn}
Let $\pi\in\mathcal{S}_{n}$ and $D(\pi)$ be the Rothe diagram of $\pi$.
We call the set of maximal polyominoes, which are constructed from $D(\pi)$ by gluing the connected components, 
a polyomino for $\pi$. A connected component in the polyomino is called a component.
\end{defn}

By definition, it is clear that the polyomino for $263514$ has two components $C_{1\cup2}$ and $C_{3}$, and 
these components are maximal.
When we merge two components into a larger component, we move the labels in the Rothe diagram in such a
way that they are compatible with the connectivity between the labels and unit squares. 

Let $\pi\in\mathcal{S}_{n}$ and $P(\pi)$ be the polyomino for $\pi$.
Let $c_{r}$ be the unit square in $P(\pi)$ such that the label right to 
$c_{r}$ is $n$.
We define $c_{l}$ to be the unit square in $P(\pi)$ such that it is the left-most square 
in the lowest row. 
As in the case of polyominoes in $\mathcal{P}$, we consider the ribbon from $c_{r}$ 
to $c_{l}$ by taking unit squares along the boundary squares in $P(\pi)$.
Here, a ribbon may have several components and be no longer a skew shape, 
we focus on only the connectivity of squares in $P(\pi)$.
We denote by $\mathtt{Rib}(\pi)$ the ribbon from $c_r$ to $c_{l}$.
We consider the maximal Dyck tiling on $\mathtt{Rib}(\pi)$.
We emphasize that we look at only the connectivity of squares in a Dyck tile,
that is, a Dyck tile may consist of squares in different several components.
Let $d_i$, $1\le i\le m$, be Dyck tiles of $\mathtt{Rib}(\pi)$.
We denote by $l_i$, $1\le i\le m$, the label of the south edge of $d_i$ in $P(\pi)$,
and by $l_{\min}$ the minimal integer in $\{l_i:1\le i\le m\}$, where $m$ is the number of 
Dyck tiles in $P(\pi)$.
The set $L^{\downarrow}:=\{l^{\downarrow}_1,\ldots,l^{\downarrow}_{s}\}$ of integers is defined to be 
the integers in one-line notation of $\pi$ such that they are a maximal decreasing sequence, 
and they are left to $l_{\min}$ and right to $n$, that is, 
$L^{\downarrow}$ satisfies 
\begin{enumerate}
\item $l^{\downarrow}_1:=l_{\min}$, 
\item $l^{\downarrow}_{i+1}<l^{\downarrow}_{i}$ and 
$l^{\downarrow}_{i+1}$ is left to $l^{\downarrow}_{i}$. Further, the integers between $l_{i+1}$ and $l_{i}$
are larger than $l_{i}$,
\item $s$ is maximal and $l^{\downarrow}_{s}$ is right to $n$ in $\pi$. 
\end{enumerate}
For example, if $\pi=81362475$ and $l_{\min}=5$, then 
we have $L^{\downarrow}=\{5,4,2,1\}$.

We define the set $\mathcal{E}'(\pi)$ of directed edges by 
\begin{align*}
\mathcal{E}'(\pi):=
\{(l_i,n) : 1\le i\le m\}\cup
\{(i,n) : i\in L^{\downarrow} \}.
\end{align*}
Define the set $I(\pi)$ of labels by
\begin{align}
\label{eq:setIpi}
I(\pi):=\{l_i:1\le i\le m\}\cup L^{\downarrow}\cup\{n\}.
\end{align}
Then, we construct a new permutation $\pi_1$ in one-line notation from $I(\pi)$ 
in such a way that we keep the positions of integers $\{1,2,\ldots,n\}\setminus I(\pi)$
as it is and we reorder the integers in $I(\pi)$ in an increasing order from left to right.
For example, when $\pi=81362475$ and $I(\pi)=\{1,2,4,5,8\}$, 
we have $\pi_1=12364578$. 
We write $\pi\xrightarrow{I(\pi)}\pi_1$, or simply $\pi\rightarrow\pi_1$.
Then, we have a sequence of permutations
\begin{align}
\label{eq:seqpi}
\pi=\pi_0\rightarrow \pi_1\rightarrow \ldots\rightarrow\pi_{t}\rightarrow \pi_{t+1}=\mathrm{id}.
\end{align}
Define the set $\mathcal{E}(\pi)$ of directed edges by
\begin{align*}
\mathcal{E}(\pi):=\bigcup_{i=0}^{t}\mathcal{E}'(\pi_{i}).
\end{align*}

\begin{theorem}
\label{thrm:Rothe}
Let $\pi\in\mathcal{S}_{n}$ and $\mathcal{E}(\pi)$ be the set of directed edges as above.
Then, $\mathcal{E}(\pi)$ coincides with the network $N(\pi^{-1})$. 
\end{theorem}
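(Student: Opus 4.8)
The plan is to reduce Theorem~\ref{thrm:Rothe} to the already-established Theorem~\ref{thrm:polyo} by showing that the permutation-based construction $\pi \to \pi_1 \to \cdots$ in Eq.~(\ref{eq:seqpi}) is a faithful translation of the polyomino-based construction $P = P_0 \to P_1 \to \cdots$ in Eq.~(\ref{eq:seqpolyo}). The key conceptual point is that a Rothe diagram, after gluing its connected components into the maximal polyominoes of $P(\pi)$, behaves locally exactly like a polyomino in $\mathcal{P}$ as far as the ribbon/Dyck-tiling machinery is concerned. So the first step is to verify that the data extracted from $\pi$ at each stage---the label $n$ of the sink, the south-edge labels $l_i$ of the Dyck tiles, and the descending set $L^{\downarrow}$---agree with the data $\{(l_i,n)\}\cup\{(i,n):i\in L_{\downarrow}\}$ that Theorem~\ref{thrm:polyo} reads off from the corresponding polyomino. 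The definition of $L^{\downarrow}$ as a maximal decreasing sequence in one-line notation left of $l_{\min}$ and right of $n$ is manifestly the permutation-language shadow of the set $L_{\downarrow}$ of labels ``left to $l_{\min}$ and strictly below $n$''; I would make this correspondence precise first, since it is what identifies $\mathcal{E}'(\pi_i)$ with $\mathcal{E}'(P_i)$ edge-by-edge.

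Next I would show that the reduction step $\pi \xrightarrow{I(\pi)} \pi_1$ on permutations matches the polyomino reduction $P \to P_1$. Recall that $I(\pi)$ in Eq.~(\ref{eq:setIpi}) collects exactly the labels consumed by one ribbon together with its descending tail and the sink $n$; reordering these into increasing position while fixing all other entries is precisely the effect, on the reading word, of deleting $\mathtt{Rib}(P)$ and the squares with labels in $L_{\downarrow}$ and then shifting the lower rows rightward. I would argue that the labeling convention for $P(\pi)$ (obtained by writing $\pi_i$ in row $i$, column $\pi_i$, then gluing components) is compatible with $\alpha$, so that $\alpha(P(\pi))=\pi$ or, more to the point, the recursive sequence of permutations attached to the shrinking polyominoes coincides with Eq.~(\ref{eq:seqpi}). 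Once both the per-step edge sets and the reduction steps are matched, $\mathcal{E}(\pi)=\bigcup_i \mathcal{E}'(\pi_i)$ equals $\mathcal{E}(P(\pi))=\bigcup_i \mathcal{E}'(P_i)$, and Theorem~\ref{thrm:polyo} gives $\mathcal{E}(P(\pi))=N(\alpha(P(\pi))^{-1})=N(\pi^{-1})$, completing the argument.

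The main obstacle, and the place where I would spend the most care, is the handling of the disconnected/glued structure of a Rothe diagram: unlike a polyomino in $\mathcal{P}$, $D(\pi)$ may break into several maximal components, so the ``ribbon'' $\mathtt{Rib}(\pi)$ need no longer be a genuine skew shape and a single Dyck tile may straddle several components. The delicate claim is that, despite this, the construction depends only on the \emph{connectivity} of squares (as emphasized in the text), so the unimodality/monotonicity facts used in Lemmas~\ref{lemma:polyoA1A4} and~\ref{lemma:polyoB2}---in particular that south-edge labels increase left to right and that deleting a ribbon merely shifts the remaining south labels upward---continue to hold component-by-component after gluing. I would want to check that the gluing operation, which moves components rightward to restore adjacency, does not disturb the relative left-to-right order of the labels that feed into $L^{\downarrow}$ and the $l_i$, so that the permutation $\pi$ itself (read off the glued diagram) records exactly the same combinatorial data the polyomino proof relied on. If that compatibility holds uniformly, the rest is bookkeeping and the theorem follows from Theorem~\ref{thrm:polyo} by the matching just described.
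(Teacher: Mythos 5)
Your proposal takes essentially the same route as the paper's own proof, which likewise reduces Theorem~\ref{thrm:Rothe} to the polyomino case by transferring the arguments of Lemmas~\ref{lemma:polyoA1A4} and~\ref{lemma:polyoB2} and of the proof of Theorem~\ref{thrm:polyo} to $\mathcal{E}(\pi)$, using the correspondence between the permutation reduction $\pi_i\xrightarrow{I(\pi_i)}\pi_{i+1}$ of Eq.~(\ref{eq:seqpi}) and the polyomino reduction $P_i\rightarrow P_{i+1}$. Your treatment of the glued, possibly multi-component structure (where you correctly note that one cannot invoke Theorem~\ref{thrm:polyo} as a black box and must check that the monotonicity facts survive component-by-component) is in fact more explicit than the paper's brief ``it is a routine'' sketch.
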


To prove Theorem \ref{thrm:Rothe}, we introduce a procedure to obtain a polyomino $P_{i+1}$
for $\pi_{i+1}$ from $P_{i}$ where $\pi_{i}$ is a permutation in Eq. (\ref{eq:seqpi}).
We regard $I(\pi_{i}):=(k_1,\ldots,k_{m})$ as an increasing integer sequence. 
We define $n:=\max I(\pi_{i})$, that is, $n=k_m$.
The integers $\{k_{j}:1\le j\le m-1 \}$ in $I(\pi_{i})$ are labels on south edges of $P_{i}$, 
and the integer $k_{m}$ is a label on an east edge of $P_{i}$.
Suppose the integer $k_{j}$ is in the $r_{j}$-th row from top.
To obtain a polyomino $P_{i+1}$, we focus on the positions of labels.
We move the integer $k_{j}$ upward or downward such that it is in the $r_{j-1}$-th row for $j>2$, and 
the integer $k_{1}$ is moved upward to the $r_{m}$-th row.
Then, we obtain a polyomino with labels. 
By keeping the components of the polyomino, we may move the components to give a compatible 
polyomino with a permutation.

\begin{example}
We consider the polyomino for $\pi=263514$ and $I(\pi)=(1,4,6)$.
The label $6$ is in the second row, and the labels $1$ and $4$ are in the fifth row in the polyomino.
By this, we transform the polyomino as follows.
\begin{align*}
\tikzpic{-0.5}{[scale=0.6]
\draw(0,0)--(1,0)(0,-1)--(4,-1)(0,-2)--(4,-2)(0,-3)--(1,-3)(2,-3)--(3,-3)(0,-4)--(1,-4)(2,-4)--(3,-4);
\draw(0,0)--(0,-4)(1,0)--(1,-4)(2,-1)--(2,-2)(2,-3)--(2,-4)(3,-1)--(3,-2)(3,-3)--(3,-4)(4,-1)--(4,-2);
\draw(1.5,-0.5)node{$2$}(4.5,-1.5)node{$6$}(1.5,-2.5)node{$3$}(3.5,-3.5)node{$5$}
(2.5,-4.5)node{$4$}(0.5,-4.5)node{$1$};
}
\rightarrow
\tikzpic{-0.5}{[scale=0.6]
\draw(0,0)--(1,0)--(1,-1)--(0,-1)--(0,0);
\draw(2,-3)--(3,-3)--(3,-4)--(2,-4)--(2,-3);
\draw(1.5,-0.5)node{$2$}(0.5,-1.5)node{$1$}(1.5,-2.5)node{$3$}(3.5,-3.5)node{$5$}(2.5,-4.5)node{$4$}(4.5,-4.5)node{$6$};
}
\rightarrow
\tikzpic{-0.5}{[scale=0.6]
\draw(0,0)--(1,0)--(1,-1)--(0,-1)--(0,0);
\draw(3,-1)--(4,-1)--(4,-2)--(3,-2)--(3,-1);
\draw(1.5,-0.5)node{$2$}(0.5,-1.5)node{$1$}(2.5,-1.5)node{$3$}(4.5,-1.5)node{$5$}(3.5,-2.5)node{$4$}(5.5,-2.5)node{$6$};
}
\end{align*}
Note that the middle polyomino is not compatible with the permutation $213546$, but 
the right one is compatible.	
\end{example}

\begin{proof}[Proof of Theorem \ref{thrm:Rothe}]
By applying the same argument as in Lemmas \ref{lemma:polyoA1A4} and \ref{lemma:polyoB2} to 
the set $\mathcal{E}(\pi)$ of directed edges, one can show that $\mathcal{E}(\pi)$ 
gives a network satisfying from (A1) to (A4) and (B1). 
We also apply the same argument as in the proof of Theorem \ref{thrm:polyo} to $\mathcal{E}(\pi)$.
Then, it is a routine to show that the set $\mathcal{E}(\pi)$ gives the same network
as $N(\pi^{-1})$.
\end{proof}

\begin{example}
Let $\pi=3164752$ and $\pi^{-1}=2714635$.
The Rothe diagram $D(\pi^{-1})$ has two components.
\begin{figure}[ht]
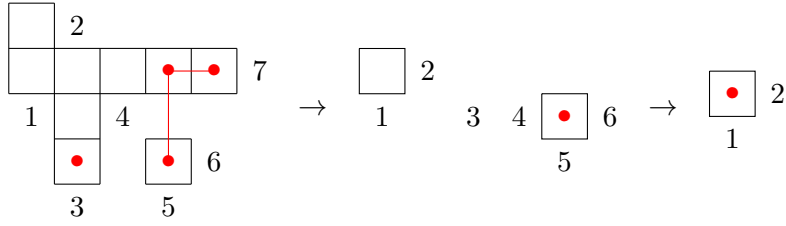

\tikzpic{-0.5}{[scale=0.6]
\draw(0,0)--(1,0)(0,-1)--(5,-1)(0,-2)--(5,-2)(1,-3)--(2,-3)(3,-3)--(4,-3)
(1,-4)--(2,-4)(3,-4)--(4,-4);
\draw(0,0)--(0,-2)(1,0)--(1,-4)(2,-1)--(2,-4)(3,-1)--(3,-2)(3,-3)--(3,-4)
(4,-1)--(4,-2)(4,-3)--(4,-4)(5,-1)--(5,-2);
\draw(1.5,-0.5)node{$2$}(5.5,-1.5)node{$7$}(0.5,-2.5)node{$1$}(2.5,-2.5)node{$4$}
(1.5,-4.5)node{$3$}(3.5,-4.5)node{$5$}(4.5,-3.5)node{$6$};
\draw[red](4.5,-1.5)node{$\bullet$}--(3.5,-1.5)node{$\bullet$}--(3.5,-3.5)node{$\bullet$}
(1.5,-3.5)node{$\bullet$};
}
$\rightarrow$
\tikzpic{-0.5}{[scale=0.6]
\draw(0,0)--(1,0)--(1,-1)--(0,-1)--(0,0);
\draw(1.5,-0.5)node{$2$}(0.5,-1.5)node{$1$}(2.5,-1.5)node{$3$}(3.5,-1.5)node{$4$};
\draw(4,-1)--(5,-1)--(5,-2)--(4,-2)--(4,-1);
\draw(5.5,-1.5)node{$6$}(4.5,-2.5)node{$5$};
\draw[red](4.5,-1.5)node{$\bullet$};
}
$\rightarrow$
\tikzpic{-0.5}{[scale=0.6]
\draw(0,0)--(1,0)--(1,-1)--(0,-1)--(0,0);
\draw(1.5,-0.5)node{$2$}(0.5,-1.5)node{$1$};
\draw[red](0.5,-0.5)node{$\bullet$};
}
\caption{A sequence of permutations for $2714635$.}
\label{fig:RNet}
\end{figure}
The left polyomino gives the set of directed edges $\{(1,7),(3,7),(5,7)\}$.
The middle and right polyominoes give the set $\{(5,6)\}$ and $\{(1,2)\}$.
From these, the set $\mathcal{E}(\pi^{-1})$ of edges is given by 
\begin{align*}
\{(1,2),(1,7),(3,7),(5,6),(5,7)\}.
\end{align*}
It is easy to see that the network $N(\pi)$ has also the same 
set of directed edges.
\end{example}

\section{A poset of networks}
\label{sec:Poset}
\subsection{Basic properties of a poset of networks}
Let $\epsilon:=\epsilon_1\ldots\epsilon_{n}\in\{1,0,-1\}^{n}$ be a sequence such that 
$\epsilon_{j}=1$ if $\epsilon_{i}=0$ for $1\le i\le j-1$.
In other words, a sequence $\epsilon$ starts from $1$ if we ignore the zeroes.
A sequence $\epsilon$ specifies the sources, sinks and neutral points on the line 
with $n$ points. The point $i$ is a source or a neutral point if $\epsilon=1$, 
a sink or a neutral point if $\epsilon=-1$, and a neutral point if $\epsilon=0$.
Let $\mathcal{N}(n;\epsilon)\subset\mathcal{N}(n)$ be the set of networks such that 
the positions of sources, sinks and neutral points are characterized by $\epsilon$.

Let $N\in\mathcal{N}(n)$ be a network with $m$ directed edges.
Then, we define a function $\rho:\mathcal{N}(n)\rightarrow\mathbb{Z}_{\ge0}$ 
by $\rho(N)=m$.
We have a graded set by this function.
Later, we see that the function $\rho$ is the rank function of the poset of networks.

Let $x,y\in\mathcal{N}(n;\epsilon)$ be networks, and denote by $\mathcal{E}(x)$
be the set of directed edges in $x$.

\begin{defn}
A network $y$ covers $x$ if and only if $\rho(y)=\rho(x)+1$ and $\mathcal{E}(x)\subset\mathcal{E}(y)$.
When $y$ covers $x$, we write $x\lessdot y$.
We write $x\le y$ if we have a sequence of networks $x=z_0\lessdot z_1\lessdot\ldots\lessdot z_{r}=y$
with $r\ge0$.
\end{defn}

Let $\mathtt{S}_{\uparrow}(\epsilon)$ (resp. $\mathtt{S}_{\downarrow}(\epsilon)$) 
be the set of indices $i$ such that $\epsilon_{i}=1$ (resp. $\epsilon_{i}=-1$).
We denote by $N_{\mathrm{max}}(\epsilon)$ a unique network which has the maximal number of 
edges in $\mathcal{N}(n;\epsilon)$.
The set of directed edges in $N_{\mathrm{max}}(\epsilon)$ is given by
\begin{align*}
\mathcal{E}(N_{\mathrm{max}}(\epsilon))
=\{(i,j) |1\le i<j\le n, i\in\mathtt{S}_{\uparrow}(\epsilon), j\in\mathtt{S}_{\downarrow}(\epsilon) \}.
\end{align*}
By construction, the network $N_{\max}(\epsilon)$ is unique.

\begin{defn}
We define a graded partially ordered post (poset) $\mathcal{P}(n;\epsilon)$ by 
$\mathcal{P}(n;\epsilon):=(\mathcal{N}(n;\epsilon),\le)$.
\end{defn}

The poset $\mathcal{P}(n;\epsilon)$ has a minimum element $\hat{0}$ and a
maximum element $\hat{1}$. 
The element $\hat{0}$ is the network without edges, i.e., the network 
corresponding to the identity permutation.
The element $\hat{1}$ is given by $N_{\mathrm{max}}(\epsilon)$.

By definition of the covering relation, note that the function $\rho$ is the rank 
function of $\mathcal{P}(n;\epsilon)$.

\begin{figure}[ht]
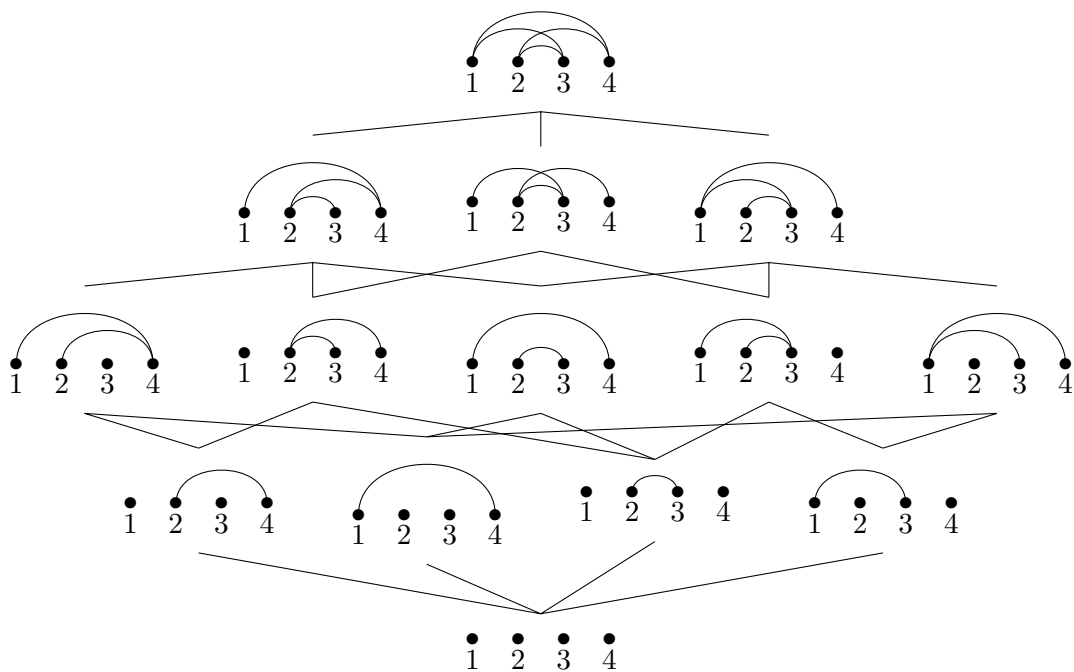

\tikzpic{-0.5}{
\node (3412) at (0,0){
\tikzpic{-0.5}{[scale=0.6]
\foreach \x in{1,2,3,4}{
\draw(\x,0)node{$\bullet$}node[anchor=north]{$\x$};
}
\draw(1,0)..controls(1,1.5)and(4,1.5)..(4,0);	
\draw(1,0)..controls(1,1)and(3,1)..(3,0);
\draw(2,0)..controls(2,1)and(4,1)..(4,0);
\draw(2,0)..controls(2,0.5)and(3,0.5)..(3,0);
}
};
\node (3421) at (-3,-2){
\tikzpic{-0.5}{[scale=0.6]
\foreach \x in{1,2,3,4}{
\draw(\x,0)node{$\bullet$}node[anchor=north]{$\x$};
}
\draw(1,0)..controls(1,1.5)and(4,1.5)..(4,0);	
\draw(2,0)..controls(2,1)and(4,1)..(4,0);
\draw(2,0)..controls(2,0.5)and(3,0.5)..(3,0);
}
};
\node (2413) at (0,-2){
\tikzpic{-0.5}{[scale=0.6]
\foreach \x in{1,2,3,4}{
\draw(\x,0)node{$\bullet$}node[anchor=north]{$\x$};
}
\draw(1,0)..controls(1,1)and(3,1)..(3,0);
\draw(2,0)..controls(2,1)and(4,1)..(4,0);
\draw(2,0)..controls(2,0.5)and(3,0.5)..(3,0);
}
};
\node (4312) at (3,-2){
\tikzpic{-0.5}{[scale=0.6]
\foreach \x in{1,2,3,4}{
\draw(\x,0)node{$\bullet$}node[anchor=north]{$\x$};
}
\draw(1,0)..controls(1,1.5)and(4,1.5)..(4,0);	
\draw(1,0)..controls(1,1)and(3,1)..(3,0);
\draw(2,0)..controls(2,0.5)and(3,0.5)..(3,0);
}
};
\node (1423) at (-3,-4){
\tikzpic{-0.5}{[scale=0.6]
\foreach \x in{1,2,3,4}{
\draw(\x,0)node{$\bullet$}node[anchor=north]{$\x$};
}
\draw(2,0)..controls(2,1)and(4,1)..(4,0);
\draw(2,0)..controls(2,0.5)and(3,0.5)..(3,0);
}
};
\node (2431) at (-6,-4){
\tikzpic{-0.5}{[scale=0.6]
\foreach \x in{1,2,3,4}{
\draw(\x,0)node{$\bullet$}node[anchor=north]{$\x$};
}
\draw(1,0)..controls(1,1.5)and(4,1.5)..(4,0);	
\draw(2,0)..controls(2,1)and(4,1)..(4,0);
}
};
\node (4321) at (0,-4){
\tikzpic{-0.5}{[scale=0.6]
\foreach \x in{1,2,3,4}{
\draw(\x,0)node{$\bullet$}node[anchor=north]{$\x$};
}
\draw(1,0)..controls(1,1.5)and(4,1.5)..(4,0);	
\draw(2,0)..controls(2,0.5)and(3,0.5)..(3,0);
}
};
\node (4213) at (6,-4){
\tikzpic{-0.5}{[scale=0.6]
\foreach \x in{1,2,3,4}{
\draw(\x,0)node{$\bullet$}node[anchor=north]{$\x$};
}
\draw(1,0)..controls(1,1.5)and(4,1.5)..(4,0);	
\draw(1,0)..controls(1,1)and(3,1)..(3,0);
}
};
\node (2314) at (3,-4){
\tikzpic{-0.5}{[scale=0.6]
\foreach \x in{1,2,3,4}{
\draw(\x,0)node{$\bullet$}node[anchor=north]{$\x$};
}
\draw(1,0)..controls(1,1)and(3,1)..(3,0);
\draw(2,0)..controls(2,0.5)and(3,0.5)..(3,0);
}
};
\node (3214) at (4.5,-6){
\tikzpic{-0.5}{[scale=0.6]
\foreach \x in{1,2,3,4}{
\draw(\x,0)node{$\bullet$}node[anchor=north]{$\x$};
}
\draw(1,0)..controls(1,1)and(3,1)..(3,0);
}
};
\node (1324) at (1.5,-6){
\tikzpic{-0.5}{[scale=0.6]
\foreach \x in{1,2,3,4}{
\draw(\x,0)node{$\bullet$}node[anchor=north]{$\x$};
}
\draw(2,0)..controls(2,0.5)and(3,0.5)..(3,0);
}
};
\node (4231) at (-1.5,-6){
\tikzpic{-0.5}{[scale=0.6]
\foreach \x in{1,2,3,4}{
\draw(\x,0)node{$\bullet$}node[anchor=north]{$\x$};
}
\draw(1,0)..controls(1,1.5)and(4,1.5)..(4,0);	
}
};
\node (1432) at (-4.5,-6){
\tikzpic{-0.5}{[scale=0.6]
\foreach \x in{1,2,3,4}{
\draw(\x,0)node{$\bullet$}node[anchor=north]{$\x$};
}
\draw(2,0)..controls(2,1)and(4,1)..(4,0);
}
};
\node (1234) at (0,-8){
\tikzpic{-0.5}{[scale=0.6]
\foreach \x in{1,2,3,4}{
\draw(\x,0)node{$\bullet$}node[anchor=north]{$\x$};
}
}
};
\draw(3412.south)--(3421.north)(3412.south)--(2413.north)(3412.south)--(4312.north);
\draw(3421.south)--(1423.north)(3421.south)--(4321.north)(3421.south)--(2431.north);
\draw(2413.south)--(1423.north)(2413.south)--(2314.north);
\draw(4312.south)--(2314.north)(4312.south)--(4321.north)(4312.south)--(4213.north);
\draw(2431.south)--(4231.north)(2431.south)--(1432.north);
\draw(1423.south)--(1432.north)(1423.south)--(1324.north);
\draw(4321.south)--(4231.north)(4321.south)--(1324.north);
\draw(4213.south)--(4231.north)(4213.south)--(3214.north);
\draw(2314.south)--(3214.north)(2314.south)--(1324.north);
\draw(3214.south)--(1234.north)(1324.south)--(1234.north)(4231.south)--(1234.north)
(1432.south)--(1234.north);
}
\caption{A poset $\mathcal{P}(4;\epsilon)$ with $\epsilon=(1,1,-1,-1)$.}
\label{fig:Eulerian}
\end{figure}

An example of the poset $\mathcal{P}(4;\epsilon)$ with $\epsilon=(1,1,-1,-1)$ 
is shown in Figure \ref{fig:Eulerian}.
Note that the network in Eq. (\ref{fig:nonad}) does not appear in 
the poset.

We briefly recall the definition of a Eulerian poset 
following \cite{Sta94,Sta97b1}.
Let $P$ be a finite graded poset of rank $n+1$ with $\hat{0}$ and $\hat{1}$.
Let $\mu$ be the M\"obius function of a poset $P$, and $\rho$ the rank function.
Thus we have $\rho(\hat{0})=0$ and $\rho(\hat{1})=n+1$.
Given two elements $x\le y$ in $P$, we write $\rho(x,y):=\rho(y)-\rho(x)$.
The function $\rho(x,y)$ is the rank of the interval $[x,y]$.

\begin{defn}
\label{defn:Eulerian}
A poset $P$ is {\it Eulerian} if $\mu(x,y)=(-1)^{\rho(x,y)}$ 
for all $x\le y$ in $P$. 
\end{defn}

Let (E1) be the following statement for a poset $P$:
\begin{enumerate}[(E1)]
\item 
The number of elements of even rank is 
equal to that of odd rank in $P$. 
\end{enumerate}
Definition \ref{defn:Eulerian} implies that the statement (E1) holds for 
every interval of rank at least one.
In terms of the rank function, Definition \ref{defn:Eulerian} means that we have 
\begin{align*}
\sum_{z\in[x,y]}(-1)^{\rho(z)}=0,
\end{align*}
if $x<y$ in $P$.

To show that the poset $\mathcal{P}(n;\epsilon)$ is a lattice, 
we define the join (or least upper bound) $x\vee y$ and the meet (or greatest lower bound)$ x\wedge y$ for 
two elements $x,y\in\mathcal{P}(n;\epsilon)$.

Recall that an element $x\in\mathcal{P}(n;\epsilon)$ can be characterized by 
the set of directed edges.
This means that we have an obvious bijection between a network and the set of 
directed edges.
Recall that $\mathcal{E}(x)$ is the set of directed edges in the network $x$.
Then, we define the set of directed edges for the meet $x\wedge y$ by 
\begin{align*}
\mathcal{E}(x\wedge y):=\mathcal{E}(x)\cap\mathcal{E}(y).
\end{align*}
In the case of the join, we define 
\begin{align*}
\mathcal{E}(x\vee y):=\mathcal{E}(x)\cup\mathcal{E}(y)\cup \mathcal{E}^{\times}(x,y).
\end{align*}
The set $\mathcal{E}^{\times}(x,y)$ is defined as follows.
Suppose that the two directed edges $(i,k)$ and $(j,l)$ in $\mathcal{E}(x)\cup\mathcal{E}(y)$ 
are crossing where $i<j<k<l$.
Then, the set $\mathcal{E}^{\times}(x,y)$ is 
\begin{align*}
\mathcal{E}^{\times}(x,y):=
\{(j,k)| (i,k),(j,l)\in\mathcal{E}(x)\cup\mathcal{E}(y)\}.
\end{align*}
Then, it is a routine to check that 
$x\wedge y\le x$, $x\wedge y\le y$, and if $z\le x$ and $z\le y$, then $z\le x\wedge y$ for any $z$.
Similarly, we have $x\le x\vee y$, $y\le x\vee y$, and if $x\le z$ and $y\le z$, then $x\vee y\le z$
for any $z$.

\begin{example}
Suppose $\mathcal{E}(x)=\{(1,3)\}$ and $\mathcal{E}(y)=\{(2,4)\}$.
Then, we have $\mathcal{E}(x\wedge y)=\emptyset$.
For the join, we have $\mathcal{E}(x\vee y)=\{(1,3),(2,4),(2,3)\}$.
Note that the network in Eq. (\ref{fig:nonad}) whose directed 
edges are $\{(1,3),(2,4)\}$ is not admissible. 
\end{example}

The next proposition is a direct consequence of the observations above.
\begin{prop}
The poset $\mathcal{P}(n;\epsilon)$ is a finite graded lattice.
\end{prop}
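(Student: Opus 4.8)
The plan is to handle the three descriptors in turn, with essentially all the substance concentrated in the word ``lattice.'' Finiteness is immediate, since every $N\in\mathcal{N}(n;\epsilon)$ is determined by $\mathcal{E}(N)$, a subset of the finite set of pairs $(i,j)$ with $i\in\mathtt{S}_{\uparrow}(\epsilon)$, $j\in\mathtt{S}_{\downarrow}(\epsilon)$ and $i<j$. The first genuine step I would carry out is to identify the order with edge-set containment, i.e.\ to prove that $x\le y$ holds if and only if $\mathcal{E}(x)\subseteq\mathcal{E}(y)$; this simultaneously confirms that $\rho(N)=|\mathcal{E}(N)|$ is an honest rank function and hence that the poset is graded. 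The forward implication is clear, since each cover enlarges the edge set by exactly one edge. For the converse I would prove a \emph{one-edge-addition lemma}: if $A\subsetneq B$ are admissible edge sets compatible with $\epsilon$, then some single edge of $B\setminus A$ can be adjoined to $A$ keeping the result admissible. Concretely, choose $e=(p,q)\in B\setminus A$ with $q$ minimal and, among such, $p$ maximal. Any violation of (B1) in $A\cup\{e\}$ must involve $e$; if $e$ is the outer-left edge of a crossing $e,(j,l)$ with $p<j<q<l$, then the forced edge $(j,q)$ lies in $B$ and, having right endpoint $q$ and left endpoint $j>p$, must lie in $A$ by maximality of $p$; if $e$ is the outer-right edge of a crossing $(i,k),e$ with $i<p<k<q$, then the forced edge $(p,k)$ has right endpoint $k<q$ and so lies in $A$ by minimality of $q$. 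Iterating produces a saturated chain from $A$ to $B$, yielding the equivalence and the grading.

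With $\le$ identified with $\subseteq$, the lattice claim reduces to showing that the prescribed sets for $x\wedge y$ and $x\vee y$ are admissible networks in $\mathcal{N}(n;\epsilon)$, after which the universal properties are immediate from the containment characterization. For the meet this is easy: conditions (A1)--(A4) hold automatically because the roles of sources and sinks are fixed by $\epsilon$ and $\mathcal{E}(x)\cap\mathcal{E}(y)$ is a subcollection of admissible edges, while (B1) is preserved under intersection, since if $(i,k),(j,l)\in\mathcal{E}(x)\cap\mathcal{E}(y)$ cross then $(j,k)$ is forced in each of $\mathcal{E}(x)$ and $\mathcal{E}(y)$ separately, hence lies in the intersection. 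Thus $x\wedge y\in\mathcal{N}(n;\epsilon)$ and is clearly the greatest lower bound. I note in passing that this already exhibits $(\mathcal{N}(n;\epsilon),\subseteq)$ as a finite meet-semilattice with maximum $N_{\mathrm{max}}(\epsilon)$, which is formally enough to conclude it is a lattice; nonetheless I would verify the explicit join formula in order to identify $x\vee y$ with it.

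The heart of the argument, and the step I expect to be the main obstacle, is proving that $S^{+}:=\mathcal{E}(x)\cup\mathcal{E}(y)\cup\mathcal{E}^{\times}(x,y)$ already satisfies (B1), i.e.\ that a single round of crossing-completion is self-closing. Writing $S:=\mathcal{E}(x)\cup\mathcal{E}(y)$, I would take any crossing $(a,c),(b,d)\in S^{+}$ with $a<b<c<d$ and show $(b,c)\in S^{+}$ by a uniform recovery: from $(a,c)$ one extracts an edge of $S$ with right endpoint $c$ and left endpoint $<b$ (namely $(a,c)$ itself if $(a,c)\in S$, or the generating edge $(i,c)$ with $i<a$ if $(a,c)\in\mathcal{E}^{\times}(x,y)$), and from $(b,d)$ one extracts an edge of $S$ with left endpoint $b$ and right endpoint $>c$ (namely $(b,d)$, or the generating edge $(b,l)$ with $l>d$). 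These two $S$-edges cross in the pattern ``$(\,\cdot\,,c)$ and $(b,\,\cdot\,)$'' with left endpoint $<b<c<$ right endpoint, so by definition $(b,c)\in\mathcal{E}^{\times}(x,y)\subseteq S^{+}$. This handles uniformly the cases where zero, one, or both of the crossing edges are newly added, which is why I prefer this formulation to a direct case split on which edge is new. Combined with the persistence of (A1)--(A4) (source/sink roles are fixed by $\epsilon$), this shows $x\vee y\in\mathcal{N}(n;\epsilon)$.

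Finally I would check minimality: if $z\ge x$ and $z\ge y$, then $\mathcal{E}(z)\supseteq S$, and since $z$ satisfies (B1) it must contain every crossing-completion of pairs in $S$, so $\mathcal{E}(z)\supseteq S^{+}=\mathcal{E}(x\vee y)$; hence $x\vee y\le z$, and $x\vee y$ is the least upper bound. Together with the meet and the grading, this shows $\mathcal{P}(n;\epsilon)$ is a finite graded lattice. The anticipated difficulties are entirely combinatorial and localized in the two crossing-condition verifications above, namely the one-edge-addition lemma and the self-closure of $S^{+}$; once these are in place, all the order-theoretic bookkeeping is routine.
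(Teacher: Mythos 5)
Your proof is correct, and it follows the same skeleton as the paper: the paper defines the meet by $\mathcal{E}(x\wedge y)=\mathcal{E}(x)\cap\mathcal{E}(y)$ and the join by $\mathcal{E}(x\vee y)=\mathcal{E}(x)\cup\mathcal{E}(y)\cup\mathcal{E}^{\times}(x,y)$, exactly as you do, and then declares the universal properties ``routine to check,'' recording the proposition as a direct consequence. What you add, and what the paper leaves entirely implicit, are the two verifications that carry the real content. First, your one-edge-addition lemma (choosing $e=(p,q)\in B\setminus A$ with $q$ minimal and, among such, $p$ maximal, and checking both (B1) cases against these extremal choices) is what justifies identifying the cover-generated order $\le$ with edge-set containment; without it, gradedness and the universal properties of $\wedge$ and $\vee$ relative to the chain-defined order do not follow formally from containment statements, and the paper nowhere supplies this step. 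Second, your uniform ``generator extraction'' argument that one round of crossing-completion is self-closing --- replacing a possibly new edge $(a,c)$ by an $S$-edge with right endpoint $c$ and smaller left endpoint, and $(b,d)$ by an $S$-edge with left endpoint $b$ and larger right endpoint, so that the extracted edges cross in the pattern $p<b<c<q$ and force $(b,c)\in S^{+}$ --- shows that the paper's join formula actually defines an element of $\mathcal{N}(n;\epsilon)$ satisfying (B1), which the paper never checks. Both arguments are sound: in the addition lemma the forced edge lies in $B$ because $B$ satisfies (B1), and your extremality of $(p,q)$ correctly pushes it into $A$; in the closure argument the case analysis genuinely covers zero, one, or two new edges at once. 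Your passing remark that a finite meet-semilattice with a maximum is automatically a lattice is also a legitimate shortcut absent from the paper, though, as you note, the explicit join formula is used later (for the M\"obius-function computation), so verifying it directly is the right call.
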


\begin{prop}
\label{prop:Boolean}
The graded poset $\mathcal{P}(n;\epsilon)$ is a Boolean lattice if $N_{\mathrm{max}}(\epsilon)$
has no crossing edges. Hence, it is Eulerian.
\end{prop}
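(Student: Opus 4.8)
The plan is to show that, under the hypothesis, the assignment $x\mapsto\mathcal{E}(x)$ identifies $\mathcal{P}(n;\epsilon)$ with the lattice of all subsets of $\mathcal{E}(N_{\mathrm{max}}(\epsilon))$ ordered by inclusion, and then to invoke the standard fact that a Boolean lattice is Eulerian. First I would record the basic containment: for every $x\in\mathcal{N}(n;\epsilon)$ one has $\mathcal{E}(x)\subseteq\mathcal{E}(N_{\mathrm{max}}(\epsilon))$, since each directed edge of $x$ runs from a source to a sink and therefore has its left endpoint in $\mathtt{S}_{\uparrow}(\epsilon)$ and its right endpoint in $\mathtt{S}_{\downarrow}(\epsilon)$, and these are exactly the edges constituting $N_{\mathrm{max}}(\epsilon)$.

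The heart of the argument is to upgrade this containment to a bijection $\mathcal{N}(n;\epsilon)\cong 2^{\mathcal{E}(N_{\mathrm{max}}(\epsilon))}$ when $N_{\mathrm{max}}(\epsilon)$ is crossing-free. The containment gives one direction; for the converse I would check that an arbitrary subset $S\subseteq\mathcal{E}(N_{\mathrm{max}}(\epsilon))$ defines a legal element of $\mathcal{N}(n;\epsilon)$. Since every edge of $S$ joins a position of $\mathtt{S}_{\uparrow}(\epsilon)$ to a position of $\mathtt{S}_{\downarrow}(\epsilon)$, no point carries both an incoming and an outgoing edge, so (A1)--(A4) hold: a point with $\epsilon_i=1$ is a source if it carries an edge and otherwise a neutral point, and symmetrically for $\epsilon_i=-1$, so the source/sink/neutral pattern is compatible with $\epsilon$. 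The one remaining condition is (B1), and here the hypothesis is decisive: since $\mathcal{E}(N_{\mathrm{max}}(\epsilon))$ contains no crossing pair $(i,k),(j,l)$ with $i<j<k<l$, no subset $S$ does either, so (B1) is satisfied vacuously. Hence $S\in\mathcal{N}(n;\epsilon)$ and the correspondence is a bijection.

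Finally I would match the order structures. By the definition of the covering relation, $x\lessdot y$ holds exactly when $\mathcal{E}(y)=\mathcal{E}(x)\cup\{e\}$ for a single edge $e$; since every subset is a legal network, any inclusion $\mathcal{E}(x)\subseteq\mathcal{E}(y)$ can be refined to a saturated chain, so $x\le y$ iff $\mathcal{E}(x)\subseteq\mathcal{E}(y)$. Moreover the crossing-free hypothesis forces $\mathcal{E}^{\times}(x,y)=\emptyset$, so meet and join reduce to $\mathcal{E}(x)\cap\mathcal{E}(y)$ and $\mathcal{E}(x)\cup\mathcal{E}(y)$. Thus $\mathcal{P}(n;\epsilon)$ is isomorphic to the Boolean lattice $B_m$ with $m=|\mathcal{E}(N_{\mathrm{max}}(\epsilon))|$. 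Every interval $[x,y]$ of $B_m$ is again Boolean of rank $\rho(x,y)$, so $\mu(x,y)=(-1)^{\rho(x,y)}$, and the poset is Eulerian in the sense of Definition \ref{defn:Eulerian}.

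I expect the only delicate point to be the converse half of the bijection: confirming that an arbitrary subset of edges genuinely satisfies the defining conditions of $\mathcal{N}(n;\epsilon)$. The subtlety is that the ``optionally neutral'' reading of $\epsilon_i=\pm1$ is precisely what lets a subset with an edgeless source-position still qualify, and that (B1) is the unique nontrivial constraint, which the no-crossing hypothesis dissolves. Once the power-set identification is in place, the Eulerian conclusion is the classical Möbius computation for Boolean lattices.
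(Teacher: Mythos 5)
Your proof is correct and takes essentially the same route as the paper: the paper's one-line argument that each element is uniquely a join of atoms (single-edge networks) is precisely your identification of $\mathcal{P}(n;\epsilon)$ with the power set of $\mathcal{E}(N_{\mathrm{max}}(\epsilon))$. Your explicit checks --- that every subset satisfies (A1)--(A4) and vacuously (B1), that $x\le y$ iff $\mathcal{E}(x)\subseteq\mathcal{E}(y)$, and that meet and join reduce to intersection and union because $\mathcal{E}^{\times}(x,y)=\emptyset$ --- just make rigorous what the paper leaves implicit.
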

\begin{proof}
Since $N_{\mathrm{max}}(\epsilon)$ has no crossing edges, an element $\mathcal{P}(n;\epsilon)$ 
can be uniquely expressed as the join of atoms. Here, an atom is an element in $\mathcal{P}(n;\epsilon)$
such that it contains only one directed edge.
This implies that $P(n;\epsilon)$ is a Boolean lattice.
It is easy to see that a Boolean lattice is Eulerian, 
which completes the proof.	
\end{proof}

For general $\epsilon$, the poset $\mathcal{P}(n;\epsilon)$ is not Eulerian.
This can be easily seen when $N_{\mathrm{max}}(\epsilon)$ has a crossing,
the poset $\mathcal{P}(n;\epsilon)$ contains the subposet as in Figure \ref{fig:LE}.
It is obvious that this subposet is not Eulerian.

However, $\mathcal{P}(n;\epsilon)$ has the following property.

\begin{prop}
\label{prop:evenodd}
In $\mathcal{P}(n;\epsilon)$, the number of elements of even rank is the same as 
that of odd rank.
\end{prop}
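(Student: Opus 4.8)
The plan is to prove the statement by exhibiting a fixed-point-free, rank-parity-reversing involution on $\mathcal{N}(n;\epsilon)$. Concretely, I would toggle a single canonically chosen directed edge whose insertion or deletion never interferes with the defining condition (B1).

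First I would record a simplification. Since the roles of the points are fixed by $\epsilon$ and $\mathtt{S}_{\uparrow}(\epsilon)\cap\mathtt{S}_{\downarrow}(\epsilon)=\emptyset$, every edge of a network in $\mathcal{N}(n;\epsilon)$ runs from a point of $\mathtt{S}_{\uparrow}(\epsilon)$ to a larger point of $\mathtt{S}_{\downarrow}(\epsilon)$, and no point carries both an incoming and an outgoing edge. Hence conditions (A1)--(A4), equivalently (C1), hold automatically, and membership in $\mathcal{N}(n;\epsilon)$ is governed solely by the closure condition (B1): a set $\mathcal{E}\subseteq\{(i,j): i\in\mathtt{S}_{\uparrow}(\epsilon),\ j\in\mathtt{S}_{\downarrow}(\epsilon),\ i<j\}$ determines a network if and only if $(i,k),(j,l)\in\mathcal{E}$ with $i<j<k<l$ implies $(j,k)\in\mathcal{E}$. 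Next I set $a:=\min\mathtt{S}_{\uparrow}(\epsilon)$ and $b:=\max\mathtt{S}_{\downarrow}(\epsilon)$. Assuming $\mathtt{S}_{\downarrow}(\epsilon)\neq\emptyset$ (otherwise the poset is a single edgeless point), the hypothesis that $\epsilon$ begins with $1$ forces $a$ to be the first nonzero index, so every sink lies to its right and thus $a<b$, making $(a,b)$ an admissible candidate edge. I then define $\phi:\mathcal{N}(n;\epsilon)\to\mathcal{N}(n;\epsilon)$ by letting $\phi(N)$ be the network with edge set $\mathcal{E}(N)\triangle\{(a,b)\}$; that is, $\phi$ deletes $(a,b)$ when it is present and inserts it otherwise.

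The heart of the argument, and the step I expect to be the main obstacle, is verifying that $\phi$ is well defined, namely that toggling $(a,b)$ preserves (B1); this is exactly where the extremality of $a$ and $b$ is used. On inserting $(a,b)$, condition (B1) could only demand a new edge if $(a,b)$ played the role of the edge $(i,k)$ (requiring a partner $(j,l)$ with $l>b$, impossible since $b$ is the largest sink) or of the edge $(j,l)$ (requiring a partner $(i,k)$ with $i<a$, impossible since $a$ is the smallest source); so no edge is forced. On deleting $(a,b)$, a violation of (B1) could arise only if $(a,b)$ were itself a required edge, i.e. the conclusion $(j,k)=(a,b)$ of an instance of (B1); but that would require edges $(i,b)$ with $i<a$ and $(a,l)$ with $l>b$, again impossible. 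Thus $\phi(N)\in\mathcal{N}(n;\epsilon)$ in both cases, and since the role labels are fixed by $\epsilon$, conditions (A2) and (A3) are never disturbed.

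Finally I would conclude. By construction $\phi$ is an involution with no fixed points, and it changes the number of edges by exactly one, so $\rho(\phi(N))=\rho(N)\pm1$; hence $\phi$ interchanges networks of even and odd rank and restricts to a bijection between them. Therefore the number of elements of even rank equals the number of elements of odd rank, equivalently $\sum_{N\in\mathcal{N}(n;\epsilon)}(-1)^{\rho(N)}=0$, which is the assertion.
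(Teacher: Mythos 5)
Your proof is correct, but it takes a genuinely different route from the paper. The paper proceeds through generating functions: it introduces the Whitney numbers $W_{r}(\epsilon)$, establishes the recurrence $\mathcal{W}(\epsilon)=\sum_{T\subseteq[1,j-1]}q^{|T|}\mathcal{W}(\epsilon(T))$ by analyzing which edges can end at the left-most sink $j$ (Proposition \ref{prop:Whitney}), and then deduces the even/odd balance by induction on the length of $\epsilon$, in effect showing $\mathcal{W}(\epsilon)\big|_{q=-1}=0$. You instead give a direct parity-reversing involution: after correctly reducing membership in $\mathcal{N}(n;\epsilon)$ to the (B1)-closure condition on subsets of $\overline{\mathcal{E}(\epsilon)}$, you toggle the extremal edge $(a,b)$ with $a=\min\mathtt{S}_{\uparrow}(\epsilon)$, $b=\max\mathtt{S}_{\downarrow}(\epsilon)$, and your case analysis that toggling can neither force a new edge nor delete a required one is complete and sound (note that $(a,b)$ is precisely the maximum edge in the linear order of Eq.~(\ref{eq:defLambda}), which is why it is closure-neutral). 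Each approach buys something: the paper's recurrence yields the full rank generating function $\mathcal{W}(\epsilon)$ --- refined information reused later via the forest statistic $\mathcal{F}(\epsilon)$ --- with the parity statement as a corollary, whereas your involution is shorter, self-contained, and produces an explicit bijection between even- and odd-rank elements rather than a mere equality of cardinalities. You also correctly flag the degenerate case $\mathtt{S}_{\downarrow}(\epsilon)=\emptyset$, where the poset is a single point and the proposition fails as literally stated; the paper excludes this only implicitly, through its normalization that $\epsilon$ reduces to a $\{1,-1\}$-sequence with $\epsilon_{1}=1$ and $\epsilon_{n}=-1$.
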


To prove Proposition \ref{prop:evenodd}, we introduce the notion of 
Whitney numbers.
We consider the {\it Whitney numbers $W_{r}(\epsilon)$ of 
the second kind} defined by 
\begin{align*}
W_{r}(\epsilon):=\#\{N\in\mathcal{P}(n;\epsilon)| \rho(N)=r \}.
\end{align*}
The number $W_{r}$ is the number of elements of $\mathcal{P}(n;\epsilon)$ of rank $r$. 
Then, we define the ordinary generating function by 
\begin{align*}
\mathcal{W}(\epsilon)=\sum_{r=0}^{n+1}q^{r} W_{r}(\epsilon).
\end{align*}
Let $\epsilon'$ be a sequence of $1$, $0$ and $-1$ obtained from $\epsilon$
by deleting several zeroes.
Since a zero corresponds to a neutral point in a network, it is obvious 
that $\mathcal{W}(\epsilon')=\mathcal{W}(\epsilon)$.
Thus, we assume that $\epsilon:=\epsilon_1\ldots\epsilon_{n}$ is a sequence of $1$ and $-1$ with the 
condition $\epsilon_1=1$ and $\epsilon_{n}=-1$.

Let $\mu,\mu'\in\{1,-1\}^{\ast}$.
We write a concatenation of two sequences $\mu$ and $\mu'$ as $\mu\circ\mu'$.
Since $\epsilon$ is a sequence of $1$ and $-1$, we abbreviate $\epsilon$ as 
$\epsilon=+^{d_1}-^{d_{2}}+^{d_3}\ldots$ where $+$ (resp. $-$) stands for $1$ (resp. $-1$).

Let $j$ be the integer such that $\epsilon_{i}=1$ for $1\le i\le j-1$ and $\epsilon_{j}=-1$. 
We define $\epsilon':=\epsilon_{j+1}\ldots\epsilon_{n}$.
Let $T$ be the subset of $\{1,\ldots, j-1\}$.
We define a sequence $\epsilon(T)$ by 
\begin{align*}
\epsilon(T):=+^{j-1-d(T)}\circ\epsilon',
\end{align*}
where the number $d(T)$ is defined by
\begin{align*}
d(T)=
\begin{cases}
\#\{k | k\in[1,j-1]\setminus T, k>\min T \}, & \text{ if } T\neq\emptyset, \\
0, & \text{ if } T=\emptyset.
\end{cases}
\end{align*}

\begin{prop}
\label{prop:Whitney}
We have a recurrence relation 
\begin{align}
\label{eq:Whitney}
\mathcal{W}(\epsilon)
=\sum_{T\subseteq[1,j-1]}q^{|T|}\mathcal{W}(\epsilon(T)).
\end{align}
\end{prop}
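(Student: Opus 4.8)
The plan is to prove \eqref{eq:Whitney} by partitioning $\mathcal{N}(n;\epsilon)$ according to how the first sink is attached, and then exhibiting a rank-shifting bijection on each block. Having deleted all zeroes, we may assume $\epsilon$ consists of $+1$'s and $-1$'s with $\epsilon_1=1$ and $\epsilon_n=-1$, and that $j$ is the position of the first $-1$; thus every point $i<j$ is a source or a neutral point, and $j$ is the first sink. For a network $N\in\mathcal{N}(n;\epsilon)$, let $T=T(N):=\{i\in[1,j-1] : (i,j)\in\mathcal{E}(N)\}$ be the set of sources joined to $j$. Since $j$ is the first sink, every edge ending at $j$ starts at some $i<j$, so $T$ records \emph{all} edges into $j$, and $\rho(N)=|T|+\#\{\text{edges of }N\text{ not incident to }j\}$. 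I would write $\mathcal{N}(n;\epsilon)=\bigsqcup_{T\subseteq[1,j-1]}\mathcal{N}_T$ and prove $\sum_{N\in\mathcal{N}_T}q^{\rho(N)}=q^{|T|}\mathcal{W}(\epsilon(T))$ for each $T$; summing over $T$ then gives the recurrence.

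The combinatorial heart is a consequence of (B1) that explains the quantity $d(T)$. Write $m=\min T$ when $T\neq\emptyset$. I claim that every point $b$ with $m<b<j$ and $b\notin T$ is necessarily a neutral point of $N$: it has no edge to $j$ (as $b\notin T$), and if it had an edge $(b,l)$ to a later sink $l>j$, then $(m,j)$ and $(b,l)$ would be crossing edges with $m<b<j<l$, forcing $(b,j)\in\mathcal{E}(N)$ by (B1), i.e.\ $b\in T$, a contradiction. There are exactly $d(T)$ such points, and a short count gives $d(T)=j-m-|T|$, hence $j-1-d(T)=m+|T|-1$. The same crossing argument yields the converse form I will need for the inverse map: any source $b$ with $m<b<j$ that is joined to a sink $l>j$ must lie in $T$.

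With this in hand I would define the forward map $N\mapsto N'$ by deleting the point $j$ together with its $|T|$ incoming edges, deleting the $d(T)$ neutral points $\{b : m<b<j,\ b\notin T\}$, and relabelling the surviving points in order. Deleting neutral points, and deleting a sink together with all edges into it, preserves (A1)--(A4) and (B1), so $N'$ is again a network; its surviving first-block points are exactly $\{1,\dots,m\}\cup(T\setminus\{m\})$, which by the identity $j-1-d(T)=m+|T|-1$ gives $N'$ the source/sink pattern $\epsilon(T)=+^{j-1-d(T)}\circ\epsilon'$, and $\rho(N)=|T|+\rho(N')$. For the inverse I would reinsert the $d(T)$ neutral points and a sink $j$, and add the edges $(i,j)$ for all $i\in T$. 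The only nontrivial point is that the reconstructed $N$ satisfies (B1): a crossing using a new edge $(i,j)$ and an old edge $(b,d)$ can only be of the form $i<b<j<d$ (there is no sink strictly before $j$, so $d<j$ is impossible), and then $b$ is a surviving first-block source with $b>m$, hence $b\in T\setminus\{m\}$, so the required edge $(b,j)$ is present; crossings of two old edges are handled by (B1) for $N'$, and two edges into $j$ never cross. This makes the forward map a bijection from $\mathcal{N}_T$ onto the set of networks of type $\epsilon(T)$ with $\rho(N)=|T|+\rho(N')$, giving $\sum_{N\in\mathcal{N}_T}q^{\rho(N)}=q^{|T|}\mathcal{W}(\epsilon(T))$. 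The degenerate case $T=\emptyset$ (where $j$ is neutral and $d(\emptyset)=0$) is included with weight $q^0$.

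I expect the main obstacle to be the verification that (B1) is preserved by the reconstruction, i.e.\ that every crossing created by the reinserted edges into $j$ already has its required third edge present. This is exactly where the converse form of the forcing lemma is essential, and care is needed because a single point of $[1,j-1]$ may switch between being a source and a neutral point as the edges into $j$ are added or removed, so the argument must be phrased in terms of which labels survive rather than in terms of the source/neutral status of a fixed point. The book-keeping identity $j-1-d(T)=\min T+|T|-1$ matching the length of $\epsilon(T)$ is then routine once the forcing lemma is in place.
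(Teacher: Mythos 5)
Your proposal is correct and follows essentially the same route as the paper: both decompose $\mathcal{N}(n;\epsilon)$ by the set $T$ of sources joined to the first sink $j$, use (B1) to show the $d(T)$ points strictly between $\min T$ and $j$ not in $T$ are forced neutral (so that deletion yields the pattern $\epsilon(T)$ with the factor $q^{|T|}$ accounting for the removed edges), and sum over $T$. Your explicit verification that (B1) survives the reinsertion step merely fills in a detail the paper's proof treats as immediate.
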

\begin{proof}
Recall that a directed edge of a network is from a source to a sink.
Since $\epsilon_{j}=-$ is the first sink from left, 
a subset $T\subseteq[1,j-1]$ corresponds to the set of 
directed edges $(i,j)$ where $i\in T$.
Since a network satisfies the property (B1), 
there is no directed edges $(i',j')$ such that 
$i'\in[1,j-1]\setminus T$, $i'>\min T$ and $j'>j$.
Similarly, we may have directed edges $(i,j')$ such that 
$i\in[1,j-1]\setminus T$, $i<\min T$ and $j'>j$.
If we delete the sink $j$ form $\epsilon$, then the maximal number of 
sources left to the sink is given by $j-1-d(T)$.
Therefore, this gives the generating function $\mathcal{W}(\epsilon(T))$.
Note that the exponent of $q$ is the rank of a network, 
which is equivalent to the number of edges, that is, $|T|$.
From these, we have Eq. (\ref{eq:Whitney}).
\end{proof}

\begin{example}
We calculate $\mathcal{W}(++---)$.
By applying Proposition \ref{prop:Whitney}, we have 
\begin{align*}
\mathcal{W}(++---)&=(1+q+q^2)\mathcal{W}(++--)+q \mathcal{W}(+--), \\
&=(1+q+q^2)^2\mathcal{W}(++-)+(1+q+q^2)q\mathcal{W}(+-)+q\mathcal{W}(+--), \\
&=(1+q+q^2)^2(1+q)^2+(1+q+q^2)q(1+q)+q (1+q)^2, \\
&=1+6q+12q^2+13q^3+9q^4+4q^5+q^6.	
\end{align*}
\end{example}

\begin{proof}[Proof of Proposition \ref{prop:evenodd}]
We prove that $\mathcal{P}(n;\epsilon)$ satisfies the statement (E1) 
by induction of the length $l(\epsilon)$ of $\epsilon$.
By a simple calculation, it is obvious that the statement holds true for $l(\epsilon)\le2$.
Assume that the statement (E1) holds true up to $l(\epsilon)=n-1$.
Then, by Proposition \ref{prop:Whitney}, the generating function $\mathcal{W}(\epsilon)$
can be written in terms of $\epsilon(T)$ whose length is strictly smaller than $\epsilon$.
By induction assumption, $\mathcal{W}(\epsilon(T))$ satisfies the statement (E1).
Then, it is obvious that $\mathcal{W}(\epsilon)$ also satisfies the statement (E1), 
which completes the proof.
\end{proof}

\subsection{Forests and Networks}
\label{sec:fornet}
We give a combinatorial interpretation of $\mathcal{W}(\epsilon)$ in 
terms of forests of binary trees.
Let $\epsilon:=\epsilon_1\ldots\epsilon_n=\{1,-1\}^{n}$ satisfying 
$\epsilon_1=1$ and $\epsilon_{n}=-1$.
Let $I^{(\delta)}:=\{i_1,\ldots,i_{l}\}$ with $\delta\in\{1,-1\}$ be the set of indices such that $\epsilon_{i_j}=\delta$  
for $1\le j\le l$ where $l$ is the number of $\delta$ in $\epsilon$. 
We define a weakly decreasing sequence $\lambda(\epsilon)=(\lambda_1,\ldots,\lambda_{r})$ from $\epsilon$ 
as 
\begin{align*}
\lambda_{j}:=\{k|\epsilon_{k}=1, k<i_{r+1-j}\in I^{(-1)}\},
\end{align*}
where $r$ is the number of $-1$ in $\epsilon$ and $1\le j\le r$.
We regard $\lambda$ as a Young diagram in French notation. 
Namely, we place $\lambda_{i}$ cells from bottom to top and left justified.

We introduce a notion of a forest of the Young diagram $\lambda(\epsilon)$.
\begin{defn}
A forest is a Young diagram $\lambda(\epsilon)$ where 
each cell contains either $0$ or $1$ point. 
A cell without (resp. with) a point is called empty (resp. pointed) cell.
A configuration of pointed cells satisfies the following constraint:
\begin{enumerate}[(F1)]
\item For every pointed cell $c$, there may exist a pointed cell below $c$ in the same column, 
or a pointed cell left to $c$ in the same row, but not both.
\end{enumerate}
\end{defn}

\begin{example}
We consider two Young diagrams $\lambda(+-+-)=(2,1)$ and $\lambda(++--)=(2,2)$.
The condition (F1) implies that the left forest in Figure \ref{fig:forest} 
is admissible, but the right one is not allowed.
\begin{figure}[ht]
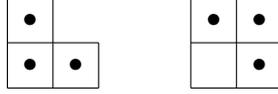

\tikzpic{-0.5}{[scale=0.6]
\draw(0,0)--(0,2)(1,0)--(1,2)(2,0)--(2,1);
\draw(0,0)--(2,0)(0,1)--(2,1)(0,2)--(1,2);
\draw(0.5,0.5)node{$\bullet$}(1.5,0.5)node{$\bullet$}(0.5,1.5)node{$\bullet$};	
}\qquad
\tikzpic{-0.5}{[scale=0.6]
\draw(0,0)--(0,2)(1,0)--(1,2)(2,0)--(2,2);
\draw(0,0)--(2,0)(0,1)--(2,1)(0,2)--(2,2);
\draw(1.5,1.5)node{$\bullet$}(1.5,0.5)node{$\bullet$}(0.5,1.5)node{$\bullet$};	
}
\caption{An example of admissible and non-admissible forests}
\label{fig:forest}
\end{figure}
This is because the pointed cell in the second row and the second column has 
two pointed cells below and left to it.
We have eight forests for $\epsilon=+-+-$ and fourteen forests for $\epsilon=++--$.
\end{example}

Given a forest, we draw two semi-infinite lines from a pointed cell upward 
and rightward.
We say that two lines are crossing if they cross at an empty cell 
in a forest. This empty cell is called a {\it crossing cell}.
Note that if we add a pointed cell on the crossing cell, it violates 
the condition (F1).
There may be several pointed cells on a line starting from a pointed cell.
If we focus on the pointed cells and semi-infinite lines, 
we obtain several binary trees in the Young diagram. 
Since a forest consists of several trees, this is why we call the diagram a forest.

\begin{defn}
We denote by $\mathtt{For}(\epsilon)$ the set of forests associated to 
the sequence $\epsilon\in\{1,-1\}^{*}$.
\end{defn}

\begin{example}
We have two forests which have a crossing cell for $\epsilon=++--$.
They are 
\begin{align}
\label{eq:Forcross}
F_1=
\tikzpic{-0.5}{[scale=0.6]
\draw(0,0)--(0,2)(1,0)--(1,2)(2,0)--(2,2);
\draw(0,0)--(2,0)(0,1)--(2,1)(0,2)--(2,2);
\draw(1.5,0.5)node{$\bullet$}(0.5,1.5)node{$\bullet$};	
\draw[red](1.5,1.5)node{$\square$};
}
\qquad 
F_2=
\tikzpic{-0.5}{[scale=0.6]
\draw(0,0)--(0,2)(1,0)--(1,2)(2,0)--(2,2);
\draw(0,0)--(2,0)(0,1)--(2,1)(0,2)--(2,2);
\draw(0.5,0.5)node{$\bullet$}(1.5,0.5)node{$\bullet$}(0.5,1.5)node{$\bullet$};
\draw[red](1.5,1.5)node{$\square$};
}
\end{align}
where a red square presents a crossing cell.
We have two binary trees in $F_1$, and a unique binary tree in $F_2$. 
\end{example}

Recall $I^{(\pm1)}$ is the set of indices $i$ in $\epsilon\in\{1,-1\}^{\ast}$.
Suppose that a cell $c$ is in the $i$-th row from bottom and in the 
$j$-th column from left.
We define a label of $c$, $l(c)$, as 
$l(c)=(p,q)$ where $p$ is the $j$-the smallest element in $I^{(+1)}$ and 
$q$ is the $i$-th largest element in $I^{(-1)}$.

The next proposition is the characterization of a forest by a network.
\begin{prop}
\label{prop:bijForNet}
A forest $F\in\mathtt{For}(\epsilon)$ is bijective to a network $N\in\mathcal{P}(n;\epsilon)$.
\end{prop}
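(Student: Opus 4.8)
The plan is to realize the asserted bijection as an explicit map $\kappa\colon\mathtt{For}(\epsilon)\to\mathcal{N}(n;\epsilon)$ built from the labels $l(c)$. To a forest $F$ I would associate the network $\kappa(F)$ whose edge set is
\[
\mathcal{E}(\kappa(F)):=\{\,l(c)\mid c\ \text{is a pointed cell of}\ F\,\}\cup\{\,l(c)\mid c\ \text{is a crossing cell of}\ F\,\}.
\]
Because the cell in the $i$-th row and $j$-th column carries the label $(p,q)$ with $p$ the $j$-th smallest source and $q$ the $i$-th largest sink, the cells of $\lambda(\epsilon)$ are in bijection with the pairs (source, sink) in which the source lies to the left of the sink; hence every element of $\mathcal{E}(\kappa(F))$ is a legitimate directed edge and the conditions (A1)--(A4) hold automatically, so $\kappa(F)\in\mathcal{N}(n;\epsilon)$. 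Note also that $\rho(\kappa(F))$ is the number of pointed cells plus the number of crossing cells, the property announced for $\kappa$. It then remains to verify that $\mathcal{E}(\kappa(F))$ satisfies (B1) and that $\kappa$ is a bijection.

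For (B1) I would first record the geometric meaning of a crossing cell: reading off the semi-infinite rays, an empty cell $g$ is a crossing cell precisely when it has a pointed cell somewhere to its left in the same row and a pointed cell somewhere below it in the same column, and by (F1) such a cell is indeed forced to be empty. Now suppose $(i,k),(j,l)\in\mathcal{E}(\kappa(F))$ with $i<j<k<l$. In the labelling these two edges occupy the upper-left and lower-right corners of a rectangle whose fourth (upper-right) corner $g$ carries the label $(j,k)$; the inequality $j<k$ guarantees that $g$ is an honest cell of $\lambda(\epsilon)$. The key point is that, whether the edge $(i,k)$ is the label of a pointed cell or of a crossing cell, there is a pointed cell of $F$ to the left of $g$ in its row (if $(i,k)$ is itself a crossing cell, take the pointed cell that produced it); symmetrically there is a pointed cell below $g$ in its column. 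Consequently $g$ cannot be pointed without violating (F1), so $g$ is a crossing cell, whence $(j,k)=l(g)\in\mathcal{E}(\kappa(F))$ and (B1) holds.

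To invert $\kappa$ I would reconstruct the pointed cells from the edge set. Given a network $N$ with edge set $\mathcal{E}(N)$, regard $\mathcal{E}(N)$ as a set of cells of $\lambda(\epsilon)$ and process them from bottom to top and, within a row, from left to right, declaring a cell $c\in\mathcal{E}(N)$ to be \emph{crossing} if by the time it is reached there is an already-declared pointed cell to its left in its row and an already-declared pointed cell below it in its column, and \emph{pointed} otherwise. An easy induction on this order shows that, applied to $N=\kappa(F)$, the rule recovers exactly the pointed and crossing cells of $F$: (F1) prevents a pointed cell of $F$ from being misread as crossing, while the definition of a crossing cell forces a crossing cell of $F$ to be read as crossing. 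This proves $\kappa$ is injective. For surjectivity I would run the same rule on an arbitrary $N\in\mathcal{N}(n;\epsilon)$; the resulting set $P$ of pointed cells satisfies (F1) by construction, and the crux is to show that the crossing cells of the forest $P$ are exactly $\mathcal{E}(N)\setminus P$. The inclusion ``$\supseteq$'' is immediate, while ``$\subseteq$'' is exactly where (B1) enters: a crossing cell $g$ of $P$ has a pointed cell to its left and a pointed cell below it, and these two cells are crossing edges of $N$ whose forced third edge (by (B1)) is the label of $g$, so $g\in\mathcal{E}(N)$. Hence $\kappa(P)=N$.

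The main obstacle is the (B1) argument in both directions, and specifically the bookkeeping when an edge appearing in the hypothesis of (B1) is the label of a crossing cell rather than a pointed cell: in both the well-definedness and the surjectivity steps one must pass from such an edge back to the underlying pointed cell of the forest, and it is precisely the combination of the ray description of crossing cells with (F1) that makes the forced corner cell land in the edge set. Once this correspondence between (B1) for networks and the crossing-cell structure of forests is pinned down, the remaining verifications are routine.
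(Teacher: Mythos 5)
Your proposal is correct and follows essentially the same route as the paper: the same forward map sending a forest to the edge set of labels of pointed and crossing cells, and the same inverse that marks all edges as pointed cells and then demotes to crossing cells those with pointed cells both to the left and below. The paper's own proof merely asserts that (B1) and mutual inverseness hold "by construction," whereas you supply the rectangle-corner argument for (B1) and the explicit bottom-to-top, left-to-right processing order with induction — these fill in details the paper omits but do not change the underlying argument.
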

\begin{proof}
We will construct a bijection between $\mathtt{For}(\epsilon)$ and $\mathcal{P}(n;\epsilon)$.
Given a forest $F$, we define the set of directed edges by
\begin{align*}
\mathcal{E}(F):=\{ (i,j) | (i,j) \text{ is a label of either a pointed or crossing cell} \}.
\end{align*}
By construction of a forest, the directed edges in $\mathcal{E}(F)$ satisfy the conditions from (A1)
to (A4) and (B1). Thus, we have a network $N(F)$.
It is obvious if $F\neq F'$,  then $N(F)\neq N(F')$.

Conversely, suppose we have a network $N\in\mathcal{P}(n;\epsilon)$ and $\mathcal{E}(N)$ is 
the set of directed edges of $N$.
We have a pointed cell corresponding to an element in $\mathcal{E}(N)$.
Pick a pointed cell $c$. Then, if there exist two pointed cells which are left to and below $c$, 
we replace the cell $c$ by a crossing cell.
We continue this process for all cells, then obtain a forest $F(N)$ satisfying the condition
(F1). It is obvious if $N\neq N'$, then $F(N)\neq F(N')$.
From these observations, we have a natural bijection between the two sets.
\end{proof}

\begin{example}
Consider the two diagrams in Eq. (\ref{eq:Forcross}).
The sets of directed edges for the networks for $F_1$ and $F_2$ are given by
\begin{align*}
N(F_1)&=\{(1,3),(2,3),(2,4)\}, \\
N(F_2)&=\{(1,3),(1,4),(2,3),(2,4)\}.
\end{align*}
Note that the crossing cell corresponds to the directed edge $(2,3)$.
\end{example}

A binary tree consists of nodes which have degree two or three.
Here, a degree of a node $\mathtt{n}$ is the number of edges connected to $\mathtt{n}$.
The degree of the root of a tree is two, and that of other internal nodes is three.
Similarly, we define the degree of a crossing cell is four.
We change a connectivity of semi-infinite lines in a forest as in Figure \ref{fig:reconnect}.
\begin{figure}[ht]
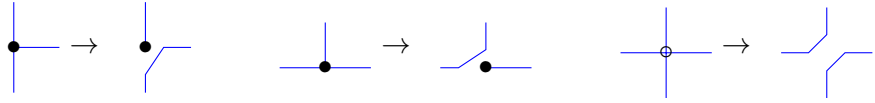

\tikzpic{-0.44}{[scale=0.6]
\draw[blue](0,0)--(0,2)(0,1)node[black]{$\bullet$}--(1,1);
}$\rightarrow$
\tikzpic{-0.44}{[scale=0.6]
\draw[blue](0,1)node[black]{$\bullet$}--(0,2);
\draw[blue](0,0)--(0,0.4)--(0.4,1)--(1,1);
}\qquad
\tikzpic{-0.5}{[scale=0.6]
\draw[blue](0,0)--(2,0)(1,0)node[black]{$\bullet$}--(1,1);
}$\rightarrow$
\tikzpic{-0.5}{[scale=0.6]
\draw[blue](1,0)node[black]{$\bullet$}--(2,0);
\draw[blue](0,0)--(0.4,0)--(1,0.4)--(1,1);
}\qquad
\tikzpic{-0.5}{[scale=0.6]
\draw[blue](0,0)--(2,0)(1,-1)--(1,1);
\draw(1,0)node{$\circ$};
}$\rightarrow$
\tikzpic{-0.5}{[scale=0.6]
\draw[blue](0,0)--(0.6,0)--(1,0.4)--(1,1);
\draw[blue](1,-1)--(1,-0.4)--(1.4,0)--(2,0);
}
\caption{Reconnection of semi-infinite lines at nodes of degree three and four.}
\label{fig:reconnect}
\end{figure}
By this operation, the degree of an internal node which is not the root becomes one, and that of a crossing 
cell becomes zero.
Each semi-infinite line in a forest looks like Figure \ref{fig:int} after the reconnections of lines.
The degree of a pointed cell is either one or two.
\begin{figure}[ht]
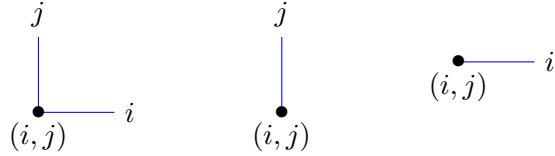

\tikzpic{-0.5}{
\draw[blue](0,1)node[anchor=south,black]{$j$}--(0,0)node[black]{$\bullet$}node[anchor=north,black]{$(i,j)$}
--(1,0)node[anchor=west,black]{$i$};
}
\qquad
\tikzpic{-0.5}{
\draw[blue](0,1)node[anchor=south,black]{$j$}--(0,0)node[black]{$\bullet$}node[anchor=north,black]{$(i,j)$};
}
\qquad
\tikzpic{-0.5}{
\draw[blue](0,0)node[black]{$\bullet$}node[anchor=north,black]{$(i,j)$}
--(1,0)node[anchor=west,black]{$i$};
}
\caption{The label of semi-infinite lines.}
\label{fig:int}
\end{figure}
We assign an integer to an semi-infinite line as follows.
Suppose $(i,j)$ is a label of a node. 
Then, as in Figure \ref{fig:int}, we assign an integer $j$ to 
a vertical line, and $i$ to a horizontal line if exists.
Let $F$ be a forest, and $L(F)$ be the set of labels assigned to 
semi-infinite line.
Define $n:=\lambda_{1}+r$ for the Young diagram $\lambda$, where 
$r$ is the length of $\lambda$.
We read the labels of semi-infinite lines from left-most one in a clockwise way, and 
denote by $w'(F)$ the word obtained in this way.
We will construct a permutation $\pi^{F}:=\pi(1)\ldots\pi(n)\in\mathcal{S}_{n}$ of the set $[n]$ from $w':=w'(F)$
as follows:
\begin{enumerate}[(G1)]
\item If $i\in[n]\setminus L(F)$, then $\pi^{F}(i)=i$.
\item If $L(F):=\{i_1<i_2<\ldots<i_{t}\}$, we define $\pi^{F}(i_j)=w'(j)$ for $1\le j\le t$. 
\end{enumerate}

\begin{defn}
Let $F\in\mathtt{For}(\epsilon)$ . Then, we define a map 
$\kappa:\mathtt{For}(\epsilon)\rightarrow\mathcal{S}_{n}$, 
$F\mapsto\pi^{F}$ given by (G1) and (G2).
\end{defn}

From Proposition \ref{prop:bijForNet}, we have a bijection between 
$\mathtt{For}(\epsilon)$ and $\mathcal{P}(n;\epsilon)$.

\begin{prop}
\label{prop:FortoNet}
Let $F\in\mathcal{F}$ and $N\in\mathcal{P}(n;\epsilon)$ be a forest and a network bijective to 
each other.
We have 
$\kappa(F)=\sigma(N)^{-1}$, i.e., two permutations are inverse of each other.
\end{prop}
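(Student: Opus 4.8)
The plan is to establish $\kappa(F)=\sigma(N)^{-1}$ by tracing both sides through the bijection of Proposition \ref{prop:bijForNet} and comparing the edge data that each map reads off. The essential observation is that the set $\mathcal{E}(F)$ of directed edges, as defined by the labels of pointed and crossing cells in the proof of Proposition \ref{prop:bijForNet}, is by construction exactly $\mathcal{E}(N)$. So $\kappa$ and $\sigma$ both consume the \emph{same} edge set; what must be reconciled is that $\sigma$ processes these edges as transpositions acting on the identity (via the linear order of Section \ref{sec:Network}), whereas $\kappa$ assembles a permutation by reading labels off the reconnected semi-infinite lines according to (G1) and (G2). I would set up the proof to show these two procedures are inverse.

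First I would pin down what the labels $L(F)$ and the reading word $w'(F)$ compute. For each node $c$ with label $l(c)=(i,j)$, the vertical line carries $j$ (a sink index, drawn from $I^{(-1)}$) and the horizontal line, when present, carries $i$ (a source index, from $I^{(+1)}$). After the reconnection of Figure \ref{fig:reconnect}, each maximal connected line in the forest runs from one label to another, so reading clockwise from the leftmost line threads the sink-indices and source-indices together into $w'(F)$; the assignment (G2) then records, for each active position $i_j\in L(F)$, which value $w'(j)$ sits there, with all inactive positions fixed by (G1). I expect that $w'(F)$ encodes precisely the cycle structure obtained by following a source up its column to a node, across to another source, and so on—i.e. the paths in the reconnected diagram are exactly the orbits of the permutation $\sigma(N)$ built from the same edges.

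The core of the argument is a bijective dictionary between following reconnected lines in $F$ and composing the transpositions of $\sigma(N)$. Here I would use Remark \ref{remark:e1e2} crucially: since disjoint edges commute, the value of $\sigma(N)$ depends only on the edge set together with the relative order forced by shared sources and sinks, and this shared-endpoint structure is exactly what the reconnection rule in Figure \ref{fig:reconnect} tracks (a node of degree three reroutes so that an incoming line continues past a shared endpoint). Concretely I would argue that an index $i$ is sent by $\sigma(N)^{-1}$ to the label one finds by walking from position $i$ along the reconnected line, which is exactly the rule (G1)–(G2) defining $\kappa(F)$. Following the convention of the preceding proofs, I would then say it is a routine verification—using the small cases $\lambda(+-+-)$ and $\lambda(++--)$ as sanity checks against the explicit edge sets $N(F_1),N(F_2)$ already computed—that the two constructions agree position by position.

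The main obstacle I anticipate is handling the crossing cells correctly. A crossing cell has degree four and its reconnection (the rightmost case of Figure \ref{fig:reconnect}) does \emph{not} terminate a line but lets both strands pass through; meanwhile its label $(i,j)$ contributes the edge $(j,k)$ forced by condition (B1), and in $\sigma(N)$ this corresponds to a transposition that is genuinely present even though it was never an ``atom'' chosen freely. I would therefore take special care to verify that the inversion-counting done by $\kappa$ (which, as the introduction stresses, reflects \emph{both} the pointed and the crossing cells) matches the word length / composition of transpositions in $\sigma(N)$, rather than merely the pointed-cell count. Establishing that the clockwise reading respects the linear edge order of $\sigma$ up to the commutations of Remark \ref{remark:e1e2}, precisely at these degree-four vertices, is the step where the proof will require genuine care rather than routine checking.
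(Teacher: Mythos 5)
Your high-level plan coincides with the paper's strategy --- reduce $\kappa(F)=\sigma(N)^{-1}$ to a local analysis of the reconnected lines of Figure \ref{fig:reconnect}, with the crossing cells singled out as the delicate case --- but the proposal stops exactly where the paper's proof begins. The steps you defer as ``routine verification'' and as ``the step where the proof will require genuine care'' constitute the entire content of the paper's argument: it explicitly checks the three local configurations. For a node of degree three it verifies, e.g., $(i,j,k)\xrightarrow{(j,k)}(i,k,j)\xrightarrow{(i,k)}(j,k,i)$ and observes that the inverse $(k,i,j)$ is what the line labels produce (and similarly for the other degree-three orientation), and for a crossing cell it computes
\begin{align*}
(i,j,k,l)\xrightarrow{(j,k)}(i,k,j,l)\xrightarrow{(j,l)}(i,l,j,k)\xrightarrow{(i,k)}(j,l,i,k),
\end{align*}
whose inverse $(k,i,l,j)$ matches the labels of the two strands passing through the degree-four vertex. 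This computation is precisely the resolution of the obstacle you flag: the forced edge $(j,k)$ acts first among the three (consistently with the linear order of Section \ref{sec:Network}), and no further bookkeeping about ``inversion-counting'' is needed. Checking the two small forests $F_1,F_2$ of Eq.~(\ref{eq:Forcross}) is a sanity check, not a substitute, since the degree-four configuration must be verified for arbitrary $i<j<k<l$; without exhibiting these local computations (or an equivalent) the proof is incomplete, because the locality argument only transports \emph{verified} local identities to the whole forest.

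There is also a concrete error in the one mechanism you do propose. The reconnected strands are not ``the orbits of the permutation $\sigma(N)$,'' and it is not true that $\sigma(N)^{-1}$ sends $i$ to ``the label one finds by walking from position $i$ along the reconnected line.'' In (G1)--(G2) the positions are the elements of $L(F)$ sorted increasingly while the values come from the \emph{global} clockwise reading word $w'(F)$; a single strand contributes two entries of $\kappa(F)$ at positions it does not itself determine. The paper's own example (Figure \ref{fig:tlt}) refutes the walking rule: there $\kappa(F)=542163=\sigma(N)^{-1}$, so $\sigma(N)^{-1}(2)=4$, yet the strand through the label $2$ terminates at the label $3$. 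So the dictionary between line-following and $\sigma(N)^{-1}$ cannot be set up pointwise along strands; it only holds after the local node-by-node comparison of labels with composed transpositions that the paper carries out, together with the clockwise reading convention.
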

\begin{proof}
Since we locally reconnect semi-infinite lines as in Figure \ref{fig:reconnect}, it is enough 
to show that $\kappa(F)=\sigma(N)^{-1}$ around nodes of degree two, three and four.
In the case of degree two, it is obvious that we have $\kappa(F)=\sigma(N)^{-1}$.
We have two cases for degree two.
The label of a node gives a directed edge in the network $\sigma(N)$, 
we have the correspondence between a binary tree with two internal nodes and 
a permutation associated to the binary tree.
Let $i,j,$ and $k$ be integers such that $i<j<k$.
Namely, we have
\begin{align*}
\tikzpic{-0.5}{[scale=0.8]
\draw[blue](0,1)to(0,0)node[black]{$\bullet$}node[anchor=north,black]{$(i,k)$}
to(1,0)node[anchor=north,black]{$(j,k)$}node[black]{$\bullet$};
\draw[blue](1,1)--(1,0)--(2,0);
\draw(0,1)node[anchor=south]{$k$}(1,1)node[anchor=south]{$i$}(2,0)node[anchor=west]{$j$};
} 
\qquad\leftrightarrow\qquad
(i,j,k)\xrightarrow{(j,k)}(i,k,j)\xrightarrow{(i,k)}(j,k,i).
\end{align*}
Note that $(k,i,j)$ is the inverse of $(j,k,i)$.
Similarly, we have the correspondence:
\begin{align*}
\tikzpic{-0.5}{[scale=0.8]
\draw[blue](0,2)node[black,anchor=south]{$j$}to(0,1)node[black]{$\bullet$}node[anchor=east,black]{$(i,j)$}
to (0,0)node[black]{$\bullet$}node[anchor=north,black]{$(i,k)$}to(1,0)node[anchor=west,black]{$i$};
\draw[blue](0,1)to(1,1)node[anchor=west,black]{$k$};
}\qquad\leftrightarrow\qquad
(i,j,k)\xrightarrow{(i,j)}(j,i,k)\xrightarrow{(i,k)}(k,i,j).
\end{align*}
Note that $(j,k,i)$ is the inverse of $(k,i,j)$.

We consider the case of degree four. The node of degree four also gives a directed 
edge by definition.
Let $i,j,k$ and $l$ be integers such that $i<j<k<l$.
The correspondence is given by 
\begin{align*}
\tikzpic{-0.5}{
\draw[blue](0,2)node[anchor=south,black]{$k$}to(0,1)node[black]{$\bullet$}node[anchor=east,black]{$(i,k)$}
to(2,1)node[anchor=west,black]{$l$};
\draw[blue](1,2)node[anchor=south,black]{$i$}to(1,0)node[black]{$\bullet$}node[anchor=north,black]{$(j,l)$}
to(2,0)node[anchor=west,black]{$j$};
\draw(1,1)node{$\circ$}node[anchor=south west]{$(j,k)$};
}\qquad\leftrightarrow\qquad
(i,j,k,l)\xrightarrow{(j,k)}(i,k,j,l)\xrightarrow{(j,l)}(i,l,j,k)\xrightarrow{(i,k)}(j,l,i,k).
\end{align*}
Note that $(k,i,l,j)$ is the inverse of $(j,l,i,k)$.

In all cases, we have $\kappa(F)=\sigma(N)^{-1}$.
The locality of the reconnection of semi-infinite lines guarantees that 
$\kappa(F)=\sigma(N)^{-1}$ holds for every forest.
This completes the proof. 
\end{proof}

\begin{example}
Let $\epsilon=(+,+,+,-,-,-)$.
We consider the network for $436215$ as in Figure \ref{fig:tlt}.
The forest for this network is given by the right picture.
In the forest, we have three internal nodes labeled $(1,5)$, $(2,4)$ 
and $(3,6)$, and two crossing cells.
The inverse of $542163$ is $436215$.

\begin{figure}[ht]
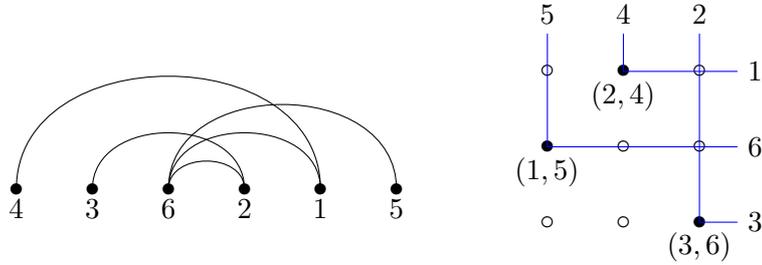

\tikzpic{-0.5}{
\foreach \x/\y in{1/4,2/3,3/6,4/2,5/1,6/5}{
\draw(\x,0)node{$\bullet$}node[anchor=north]{$\y$};
}
\draw(1,0)..controls(1,2)and(5,2)..(5,0);
\draw(2,0)..controls(2,1)and(4,1)..(4,0);
\draw(3,0)..controls(3,0.5)and(4,0.5)..(4,0);
\draw(3,0)..controls(3,1)and(5,1)..(5,0);
\draw(3,0)..controls(3,1.5)and(6,1.5)..(6,0);
}\qquad
\tikzpic{-0.5}{
\foreach \x/\y in {0/0,1/0,1/1,2/1,0/2,2/2}{
\draw(\x,\y)node{$\circ$};
}
\foreach \x/\y in{0/1,1/2,2/0}{
\draw(\x,\y)node{$\bullet$};
}
\draw(0,1)node[anchor=north]{$(1,5)$};
\draw(1,2)node[anchor=north]{$(2,4)$};
\draw(2,0)node[anchor=north]{$(3,6)$};
\draw[blue](0,2.5)node[anchor=south,black]{$5$}--(0,1)--(2.5,1)node[anchor=west,black]{$6$};
\draw[blue](1,2.5)node[anchor=south,black]{$4$}--(1,2)--(2.5,2)node[anchor=west,black]{$1$};
\draw[blue](2,2.5)node[anchor=south,black]{$2$}--(2,0)--(2.5,0)node[anchor=west,black]{$3$};

}

\caption{The network and the forest for $436215$.}
\label{fig:tlt}
\end{figure}
\end{example}

Given a forest $F\in\mathtt{For}(\epsilon)$, we define 
$N(F;\bullet)$ and $N(F;\circ)$  to be the number of pointed cells 
and that of crossing cells respectively.
We consider the ordinary generating function of forests:
\begin{align*}
\mathcal{F}(\epsilon):=
\sum_{F\in\mathtt{For}(\epsilon)}q^{N(F;\bullet)+N(F;\circ)}.
\end{align*}

Recall that $\mathcal{W}(\epsilon)$ is the generating function of 
the Whitney numbers.
\begin{theorem}
We have $\mathcal{W}(\epsilon)=\mathcal{F}(\epsilon)$.
\end{theorem}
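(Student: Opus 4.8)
The plan is to obtain the identity as an immediate consequence of the bijection in Proposition~\ref{prop:bijForNet}, by tracking how the two relevant statistics correspond under that bijection. Recall that Proposition~\ref{prop:bijForNet} gives a bijection $F\mapsto N(F)$ from $\mathtt{For}(\epsilon)$ onto $\mathcal{P}(n;\epsilon)$, where the edge set of $N(F)$ is
\begin{align*}
\mathcal{E}(F)=\{(i,j) : (i,j)\text{ is the label }l(c)\text{ of a pointed or a crossing cell }c\text{ of }F\}.
\end{align*}
Since the rank function $\rho$ counts directed edges, the whole statement reduces to verifying the single numerical identity $\rho(N(F))=\#\mathcal{E}(F)=N(F;\bullet)+N(F;\circ)$ for every forest $F$.

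The main (indeed the only) step is therefore to check that the assignment $c\mapsto l(c)$, restricted to the pointed and crossing cells of $F$, is a bijection onto $\mathcal{E}(F)$. This is where I would spend the argument: the label $l(c)=(p,q)$ is determined solely by the row index $i$ and column index $j$ of $c$, with $p$ the $j$-th smallest element of $I^{(+1)}$ and $q$ the $i$-th largest element of $I^{(-1)}$; both coordinates are strictly monotone in $(i,j)$, so distinct cells carry distinct labels. Consequently the pointed cells and the crossing cells contribute $N(F;\bullet)+N(F;\circ)$ pairwise-distinct edges, and by the very definition of $\mathcal{E}(F)$ these are exactly the edges of $N(F)$. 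Hence $\#\mathcal{E}(F)=N(F;\bullet)+N(F;\circ)$, and since $\rho(N(F))=\#\mathcal{E}(F)$ we get $\rho(N(F))=N(F;\bullet)+N(F;\circ)$.

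With this in hand the conclusion is pure bookkeeping: summing over all forests and using that $F\mapsto N(F)$ is a bijection onto $\mathcal{P}(n;\epsilon)$ gives
\begin{align*}
\mathcal{F}(\epsilon)
=\sum_{F\in\mathtt{For}(\epsilon)}q^{N(F;\bullet)+N(F;\circ)}
=\sum_{N\in\mathcal{P}(n;\epsilon)}q^{\rho(N)}
=\sum_{r\ge0}q^{r}W_{r}(\epsilon)
=\mathcal{W}(\epsilon),
\end{align*}
which is the desired equality. I expect no genuine obstacle here: the content is entirely carried by Proposition~\ref{prop:bijForNet}, and the cell-to-edge count is a short injectivity check rather than a substantive computation.
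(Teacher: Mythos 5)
Your proposal is correct and takes essentially the same route as the paper: the paper's proof likewise invokes the bijection of Proposition~\ref{prop:bijForNet} and reduces everything to the identity $N(F;\bullet)+N(F;\circ)=|\mathcal{E}(N)|$, which it declares obvious from the construction of the bijection. Your only addition is to spell out the injectivity of the cell-labeling $c\mapsto l(c)$, which the paper leaves implicit.
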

\begin{proof}
From Proposition \ref{prop:bijForNet}, we have a natural bijection 
between a forest $F$ and a network $N$.
It is enough to show that $N(F;\bullet)+N(F;\circ)=|\mathcal{E}(N)|$.
However, this equation is obvious from the construction of the 
bijection (see the proof of Proposition \ref{prop:bijForNet}).
This completes the proof.
\end{proof}

\subsection{Forests and permutations}
In Section \ref{sec:fornet}, we see the correspondence between a forest 
and a network. Since a network is bijective to a permutation, this 
correspondence gives a bijection between a forest and a permutation.
In this subsection, we give another correspondence between a forest 
and a permutation, which is compatible with an order of permutations.	

Let $\epsilon\in\{+,-\}^{n}$ be a sequence of $+$ and $-$ such that 
$\epsilon_{1}=+$ and $\epsilon_{n}=-$.
We denote by $N_{\max}(\epsilon)$ the unique network which has the 
maximal number of directed edges.
Recall that $\lambda(\epsilon)$ is a Young diagram obtained from $\epsilon$ in 
French notation.
We define two sets $I^{+}(\epsilon)$ and $I^{-}(\epsilon)$ by 
$I^{\pm}(\epsilon):=\{i\in[n]| \epsilon_{i}=\pm\}$.
We put labels on the west and south edges of $\lambda(\epsilon)$ as follows.
The labels on the west edges are in $I^{-}(\epsilon)$ and increasing from top 
to bottom. Similarly, the labels on the south edges are in $I^{+}(\epsilon)$ and 
increasing from left to right.
A label of a cell $c$ in $\lambda(\epsilon)$ is a pair of integers $(x,y)$ 
where $x$ (resp. $y$) is the label of the south (resp. west) edge below 
(resp. left to) $c$ in the same column (resp. row) in $\lambda(\epsilon)$.
We consider a forest in $\lambda(\epsilon)$ as in Section \ref{sec:fornet}.

Since a forest $F$ in $\lambda(\epsilon)$ satisfies the condition (F1), we have several binary trees 
in the forest by connecting pointed cells by vertical and horizontal lines.
A binary tree consists of nodes and leaves. 
An internal node is a node which has a child node, and a leaf is a node which does not have 
a child node. 
We construct a permutation $\nu(F)$ form $F$ in the following way.
Pick a leaf of a binary tree in $F$, that is, a pointed cell $c$ which has no pointed cell above
and right to it. 
We exchange the labels on the boundary of $\lambda(\epsilon)$ which correspond to the label of $c$,
and delete the pointed cell $c$ from $F$.
We denote by $F'$ the new forest $F$. We write this relation by $F\xrightarrow{c}F'$.
We have a sequence of forests
\begin{align*}
F\xrightarrow{c_1}F_1\xrightarrow{c_2}\cdots\xrightarrow{c_m}F_{m},
\end{align*}
where $F_{m}$ is the forest without pointed cells.	 
By reading the labels of the west and south edges of $F_{m}$ counterclockwise, we obtain a 
permutation $\nu(F)$.

\begin{example}
\label{ex:forest}
We consider a forest $F$ with five pointed cells in $\lambda=(3,3,2)$:
\begin{align*}
\tikzpic{-0.5}{[scale=0.45]
\draw(0,0)--(3,0)(0,1)--(3,1)(0,2)--(3,2)(0,3)--(2,3);
\draw(0,0)--(0,3)(1,0)--(1,3)(2,0)--(2,3)(3,0)--(3,2);
\draw[gray](0.5,0.5)--(0.5,3)(0.5,0.5)--(3,0.5)(1.5,0.5)--(1.5,3)(1.5,1.5)--(3,1.5);
\draw(0.5,0.5)node{$\bullet$}(1.5,0.5)node{$\bullet$}(1.5,1.5)node{$\bullet$}
(2.5,1.5)node{$\bullet$}(1.5,2.5)node{$\bullet$};
\draw(-0.5,2.5)node{$3$}(-0.5,1.5)node{$5$}(-0.5,0.5)node{$6$};
\draw(0.5,-0.5)node{$1$}(1.5,-0.5)node{$2$}(2.5,-0.5)node{$4$};
}
\rightarrow
\tikzpic{-0.5}{[scale=0.45]
\draw(0,0)--(3,0)(0,1)--(3,1)(0,2)--(3,2)(0,3)--(2,3);
\draw(0,0)--(0,3)(1,0)--(1,3)(2,0)--(2,3)(3,0)--(3,2);
\draw[gray](0.5,0.5)--(0.5,3)(0.5,0.5)--(3,0.5)(1.5,0.5)--(1.5,3)(1.5,1.5)--(3,1.5);
\draw(0.5,0.5)node{$\bullet$}(1.5,0.5)node{$\bullet$}(1.5,1.5)node{$\bullet$}
(2.5,1.5)node{$\bullet$};
\draw(-0.5,2.5)node{$2$}(-0.5,1.5)node{$5$}(-0.5,0.5)node{$6$};
\draw(0.5,-0.5)node{$1$}(1.5,-0.5)node{$3$}(2.5,-0.5)node{$4$};
}
\rightarrow
\tikzpic{-0.5}{[scale=0.45]
\draw(0,0)--(3,0)(0,1)--(3,1)(0,2)--(3,2)(0,3)--(2,3);
\draw(0,0)--(0,3)(1,0)--(1,3)(2,0)--(2,3)(3,0)--(3,2);
\draw[gray](0.5,0.5)--(0.5,3)(0.5,0.5)--(3,0.5)(1.5,0.5)--(1.5,3)(1.5,1.5)--(3,1.5);
\draw(0.5,0.5)node{$\bullet$}(1.5,0.5)node{$\bullet$}(1.5,1.5)node{$\bullet$};
\draw(-0.5,2.5)node{$2$}(-0.5,1.5)node{$4$}(-0.5,0.5)node{$6$};
\draw(0.5,-0.5)node{$1$}(1.5,-0.5)node{$3$}(2.5,-0.5)node{$5$};
}
\rightarrow
\tikzpic{-0.5}{[scale=0.45]
\draw(0,0)--(3,0)(0,1)--(3,1)(0,2)--(3,2)(0,3)--(2,3);
\draw(0,0)--(0,3)(1,0)--(1,3)(2,0)--(2,3)(3,0)--(3,2);
\draw[gray](0.5,0.5)--(0.5,3)(0.5,0.5)--(3,0.5)(1.5,0.5)--(1.5,3);
\draw(0.5,0.5)node{$\bullet$}(1.5,0.5)node{$\bullet$};
\draw(-0.5,2.5)node{$2$}(-0.5,1.5)node{$3$}(-0.5,0.5)node{$6$};
\draw(0.5,-0.5)node{$1$}(1.5,-0.5)node{$4$}(2.5,-0.5)node{$5$};
}
\rightarrow
\tikzpic{-0.5}{[scale=0.45]
\draw(0,0)--(3,0)(0,1)--(3,1)(0,2)--(3,2)(0,3)--(2,3);
\draw(0,0)--(0,3)(1,0)--(1,3)(2,0)--(2,3)(3,0)--(3,2);
\draw[gray](0.5,0.5)--(0.5,3)(0.5,0.5)--(3,0.5);
\draw(0.5,0.5)node{$\bullet$};
\draw(-0.5,2.5)node{$2$}(-0.5,1.5)node{$3$}(-0.5,0.5)node{$4$};
\draw(0.5,-0.5)node{$1$}(1.5,-0.5)node{$6$}(2.5,-0.5)node{$5$};
}
\rightarrow
\tikzpic{-0.5}{[scale=0.45]
\draw(0,0)--(3,0)(0,1)--(3,1)(0,2)--(3,2)(0,3)--(2,3);
\draw(0,0)--(0,3)(1,0)--(1,3)(2,0)--(2,3)(3,0)--(3,2);
\draw(-0.5,2.5)node{$2$}(-0.5,1.5)node{$3$}(-0.5,0.5)node{$1$};
\draw(0.5,-0.5)node{$4$}(1.5,-0.5)node{$6$}(2.5,-0.5)node{$5$};
}
\end{align*}
By reading the labels on the boundary of the Young diagram, we have 
the permutation $\nu(F)=231465$.
Note that we have two leaves whose labels are $(2,3)$ and $(4,5)$, and 
the order to delete these cells is irrelevant to the permutation $\nu(F)$.
\end{example}

We give another characterization of the permutation $\nu(F)$ from $F$.
Let $\pi$ be a permutation corresponding to the network with maximal number of directed 
edges.
The permutation $\nu(F)$ is also obtained from the forest $F$ in a similar way to $\kappa(F)$.
As in Section \ref{sec:fornet}, we reconnect the semi-infinite line 
from a pointed cell as the left and middle pictures in Figure \ref{fig:reconnect}.
We do not reconnect the lines of degree four.
We obtain a permutation $\widetilde{\kappa}(F)$ from $F$ in a similar manner by 
use of (G1) and (G2).
Define a permutation $\mu$ by 
\begin{align}
\label{eq:defmu}
\mu(F):=\widetilde{\kappa}(F)\circ \pi^{-1}.
\end{align}
where $u\circ v$ is a permutation product of $u$ and $v$.

\begin{prop}
\label{prop:numu}
We have 
\begin{align}
\label{eq:numu}
\nu(F)=\mu(F)^{-1}.
\end{align}
\end{prop}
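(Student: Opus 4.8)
The plan is to prove the identity by induction on the number $m$ of pointed cells of $F$, showing that the peeling permutation $\nu(F)$ and the reconnection permutation $\mu(F)^{-1}$ obey the same recursion under deletion of a single \emph{removable} leaf, namely a pointed cell $c$ having no pointed cell above it or to its right. Such a cell is exactly one that may be chosen first in the peeling process, and, by (F1), it falls into one of three local types: it has a pointed cell directly below it, or directly to its left, or neither.

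For the base case $m=0$ the peeling process is empty, so $\nu(F)$ is just the reading of the initial boundary labels of $\lambda(\epsilon)$ (the west edges carrying $I^{-}(\epsilon)$ increasingly from top to bottom, the south edges carrying $I^{+}(\epsilon)$ increasingly from left to right). On the other side $L(F)=\emptyset$, so $\widetilde{\kappa}(F)=\mathrm{id}$ and hence $\mu(F)^{-1}=\pi\circ\widetilde{\kappa}(F)^{-1}=\pi$. Thus the base case reduces to identifying this initial reading with $\pi$; since $\pi$ is the permutation of the maximal network $N_{\max}(\epsilon)$, this is the assertion $\pi=\kappa(F_{\max})=\sigma(N_{\max}(\epsilon))^{-1}$, which I would confirm directly from the definitions of $\pi$ and of the labelling, invoking Proposition~\ref{prop:FortoNet}.

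Next I establish the recursion for $\nu$. Let $c$ be a removable leaf with label $(p,q)$. Peeling $c$ first interchanges the two boundary labels $p$ and $q$ and then runs the remaining peeling on $F'=F\setminus c$ with those two labels swapped. Interchanging the values $p\leftrightarrow q$ conjugates every subsequent transposition and also conjugates the final reading, while the extra initial swap contributes one more factor; the net effect is left multiplication by the transposition $(p,q)$, so $\nu(F)=(p,q)\,\nu(F')$. Here I use the order-independence of $\nu$ (asserted after Example~\ref{ex:forest}) to justify peeling $c$ first; if needed, this independence follows from commutation of disjoint transpositions as in Remark~\ref{remark:e1e2}. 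In parallel I claim the matching statement $\widetilde{\kappa}(F)=\widetilde{\kappa}(F')\,(p,q)$, equivalently $\widetilde{\kappa}(F)^{-1}=(p,q)\,\widetilde{\kappa}(F')^{-1}$. Granting this, and writing products in ordinary composition so that $\mu(F)^{-1}=\widetilde{\kappa}(F)^{-1}\pi$, we get $\mu(F)^{-1}=(p,q)\,\widetilde{\kappa}(F')^{-1}\pi=(p,q)\,\mu(F')^{-1}$, the same recursion as for $\nu$; the induction then forces $\nu(F)=\mu(F)^{-1}$.

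The main obstacle is proving the reconnection recursion $\widetilde{\kappa}(F)^{-1}=(p,q)\,\widetilde{\kappa}(F')^{-1}$. Because $c$ has no pointed cell above or to its right, its upward line (labelled $q$) runs straight up its column to the top boundary and its rightward line (labelled $p$) runs straight along its row to the right boundary; crucially, in $\widetilde{\kappa}$ the lines are \emph{not} reconnected at degree-four cells, so both pass straight through any crossings met en route. The delicate point is to verify that this straight passage does not move the endpoints of the other wires, and that deleting $c$ alters the diagram only locally. I would then run a case analysis over the three local pictures of Figure~\ref{fig:reconnect} (degree two when $c$ is isolated, and the two handednesses of degree three when $c$ has a pointed neighbour below or to the left), checking in each case that the permutation read off by (G1) and (G2) changes exactly by exchanging the inputs $p$ and $q$ and by nothing else. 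This local wiring verification is the technical heart of the argument, the remaining steps being bookkeeping with the two recursions and the base case.
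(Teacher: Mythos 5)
Your proposal is correct and follows essentially the same route as the paper's proof: induction on the number of pointed cells, with base case the empty forest whose boundary reading is $\pi$ (so $\widetilde{\kappa}(F)=\mathrm{id}$), and an inductive step that removes one pointed cell and reduces the matching of the two recursions to a local wiring verification in the spirit of Proposition~\ref{prop:FortoNet}. The only difference is cosmetic: the paper deletes the root of a binary tree (the last cell of the peeling sequence) while you delete a removable leaf (the first cell) and make the bookkeeping explicit as left multiplication by the transposition $(p,q)$ on both $\nu$ and $\widetilde{\kappa}^{-1}\pi$ — a mirror image of the same argument, carried out at a comparable level of detail to the paper's ``diagram chasing.''
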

\begin{proof}
We prove the statement by induction on the number of pointed cells.
Suppose that $F$ is a forest without pointed cells. 
It is clear that $\widetilde{\kappa}(F)=\mathrm{id}$ and the reading word 
of the labels on the boundary is $\pi$.
We have $\nu(F)=(\widetilde{\kappa}(F)\circ \pi^{-1})^{-1}=\pi$, which 
implies Eq. (\ref{eq:numu}).

Suppose that Eq. (\ref{eq:numu}) holds for a forest $F'$ with $m-1$ pointed cells.
A forest $F'$ with $m-1$ pointed cells can be obtained from a forest $F$ with $m$ pointed cells
by deleting a root of a binary tree of $F$. 
Since we may have several binary trees in $F$, there are several choices of $F'$.
By induction hypothesis, we have $\nu(F')=\mu(F')^{-1}$.
We add one pointed cell $c$ to $F'$.
To compute $\nu(F)$, we need to consider semi-infinite lines starting from pointed cells, 
and reconnect them according to the left and middle pictures in Figure \ref{fig:reconnect}.
Recall that we have a sequence of forests
\begin{align*}
F\xrightarrow{c_1} F_{1}\xrightarrow{c_2}\cdots F_{m-1}\xrightarrow{c}F_{m}. 
\end{align*}
Since the last cell in the above sequence is $c$, the labels on the boundary of the Young diagram 
for $F_{m-1}$ is nothing but the permutation $\nu(F')$.
We need to exchange the labels corresponding to $c$.
By a diagram chasing as in the proof of Proposition \ref{prop:FortoNet},
it is clear that we have $\nu(F)=\mu(F)^{-1}$ by use of $\nu(F')=\mu(F')^{-1}$.
\end{proof}

\begin{example}
Consider the same forest $F$ as in Example \ref{ex:forest}.
It is easy to see that $\widetilde{\kappa}(F)=635142$. 
Since we have $\pi=356124$, $\pi^{-1}=451623$.
From these, $\mu(F)=635142\circ 451623=312465$.
From Proposition \ref{prop:numu}, we obtain the permutation
$\nu(F)=\mu(F)^{-1}=(312465)^{-1}=231465$.
This is nothing but the same permutation in Example \ref{ex:forest}.
\end{example}

In Section \ref{sec:fornet}, we have seen that the generating function $\mathcal{F}(\epsilon)$ 
involves both the numbers of pointed cells and crossing cells.
We will see that the permutation $\nu(F)$ reflects only the number of pointed cells.
To show this, we interpret the number of pointed cells in terms of the length 
of a chain of permutations.

Let $\pi,\nu\in\mathcal{S}_{n}$ be permutations. 
We say that $\nu$ covers $\pi$ if there exists a pair of integers $(i,j)$ such that
$\pi_{i}>\pi_{j}$, $(\nu_i,\nu_j)=(\pi_{j},\pi_{i})$, and $\nu_{k}=\pi_{k}$ for 
$i,j\neq k$.
We write $\pi\lessdot\nu$ when $\nu$ covers $\pi$.

We define a graded set $B(\pi):=\bigcup_{i\ge0}B_{i}(\pi)$ as follows.
First, define $B_{0}(\pi)=\{\pi\}$.
Secondly, we define the sets $B_{i\ge1}(\pi)$ recursively by 
\begin{align*}
B_{i+1}(\pi):=\{\nu | B_{i}(\pi)\ni\pi'\lessdot\nu  \}\setminus \bigcup_{0\le j\le i}B_{j}(\pi),
\end{align*}
where $i\ge0$.
Given a permutation $\pi$, we can consider a graded poset with the order described as above.
This poset is not in general a lattice.
The posets for $\pi=321$ and $\pi=312$ are depicted in Figure \ref{fig:poset321}.
\begin{figure}[ht]
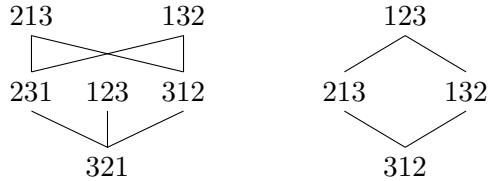

\tikzpic{-0.5}{
\node(c0)at(0,0){$321$};
\node(c1)at(0,1){$123$};
\node(c2)at(-1,1){$231$};
\node(c3)at(1,1){$312$};
\node(c4)at(-1,2){$213$};
\node(c5)at(1,2){$132$};
\draw(c0.north)--(c1.south)(c0.north)--(c2.south)(c0.north)--(c3.south)
(c2.north)--(c4.south)(c2.north)--(c5.south)(c3.north)--(c4.south)(c3.north)--(c5.south);
}
\qquad
\tikzpic{-0.5}{
\node(c3)at(0,0){$312$};
\node(c4)at(-0.8,1){$213$};
\node(c5)at(0.8,1){$132$};
\node(c1)at(0,2){$123$};
\draw(c3.north)--(c4.south)(c3.north)--(c5.south)(c4.north)--(c1.south)(c5.north)--(c1.south);
}
\caption{Posets whose minimal elements are $321$ and $312$.}
\label{fig:poset321}
\end{figure}
Suppose that $\nu\in B_{r}(\pi)$. Then, we define the length of $\nu$ from $\pi$ 
by $l(\pi,\nu):=r$.
For example, we have $l(321,123)=1$ for $\pi=321$. Similarly, we have $l(312,123)=2$ for $\pi=312$.
Note that $123$ covers two permutations $213$ and $132$, but has already covered $321$ in the first example, 
however, $123$ covers these two permutations in the second example.

Denote by $|F|$ the number of pointed cells in $F$, and let $\nu(F)$ be the permutation 
obtained from $F$.
\begin{prop}
Let $\pi$ be a permutation corresponding to the network $N_{\max}(\epsilon)$.
We have 
\begin{align}
\label{eq:Finl}
|F|=l(\pi,\nu(F)).
\end{align}
\end{prop}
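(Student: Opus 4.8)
The plan is to prove the identity by squeezing $l(\pi,\nu(F))$ between $|F|$ from above and from below, the upper bound coming from an explicit chain of coverings and the lower bound from a reflection-length (cycle-counting) obstruction. For the upper bound I would read off $\nu(F)$ from the leaf-deletion process directly. Starting from the boundary word of the forest with all labels in place, whose reading is $\pi$, each deletion of a leaf $c$ exchanges the two boundary labels attached to $c$; by the local analysis already carried out in the proofs of Propositions \ref{prop:FortoNet} and \ref{prop:numu}, and since the label $(p_c,q_c)$ of a cell always satisfies $p_c<q_c$ with $p_c\in I^{+}(\epsilon)$, $q_c\in I^{-}(\epsilon)$, this exchange sorts an inversion of the current word and is therefore a covering step for the relation $\lessdot$. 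Deleting all $|F|$ pointed cells thus yields a chain of covers $\pi=\rho_0\lessdot\rho_1\lessdot\cdots\lessdot\rho_{|F|}=\nu(F)$, so $\nu(F)\in\bigcup_{i\le|F|}B_i(\pi)$ and hence $l(\pi,\nu(F))\le|F|$.

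For the lower bound I would introduce the monovariant $\phi(\rho):=n-c(\pi^{-1}\rho)$, the reflection length of $\pi^{-1}\rho$ (equivalently $n$ minus its number of cycles). Every cover $\rho\lessdot\rho'$ is a right multiplication $\rho'=\rho\cdot(i\,j)$ by a transposition of positions, so $\pi^{-1}\rho'$ differs from $\pi^{-1}\rho$ by a single transposition and $\phi$ changes by exactly $\pm1$; in particular $\phi(\rho')\le\phi(\rho)+1$. Therefore any chain of covers from $\pi$, where $\phi=0$, to $\nu(F)$ has length at least $\phi(\nu(F))$, giving $l(\pi,\nu(F))\ge\phi(\nu(F))$. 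By Proposition \ref{prop:numu} we have $\nu(F)=\mu(F)^{-1}=\pi\,\widetilde{\kappa}(F)^{-1}$, so $\pi^{-1}\nu(F)=\widetilde{\kappa}(F)^{-1}$ and $\phi(\nu(F))=\ell_R(\widetilde{\kappa}(F))$.

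The crux is then to show $\ell_R(\widetilde{\kappa}(F))=|F|$. By its construction $\widetilde{\kappa}(F)$ is a product of exactly $|F|$ transpositions, one transposition $(p_c,q_c)$ for each pointed cell $c$ (the crossing cells, being degree-four nodes, contribute none), so $\ell_R(\widetilde{\kappa}(F))\le|F|$ automatically and equality is exactly the statement that this factorization is reduced. I would prove reducedness by showing that the graph $G$ on vertex set $[n]$ whose edges are the labels $\{p_c,q_c\}$ of the pointed cells is a forest: for a forest each connected component contributes a single cycle on its vertices for any order of the factors, so $\widetilde{\kappa}(F)$ has $n-|F|$ cycles and $\ell_R(\widetilde{\kappa}(F))=|F|$. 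Now $G$ is bipartite with parts $I^{+}(\epsilon)$ and $I^{-}(\epsilon)$, so a cycle of $G$ would be a set of pointed cells in which every used row and every used column contains exactly two of the cells; taking the cell $C^{*}$ in the topmost used row and, within that row, in the rightmost used column, I find a pointed cell below $C^{*}$ in its column and a pointed cell to the left of $C^{*}$ in its row, contradicting condition (F1). Hence $G$ is a forest and $\phi(\nu(F))=|F|$.

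Combining the two bounds yields $l(\pi,\nu(F))=|F|$. I expect the main obstacle to be the third paragraph: passing from the purely local constraint (F1) to the global acyclicity of the label graph $G$ that forces the factorization $\widetilde{\kappa}(F)$ to be reduced, together with the (classical) fact that a product of transpositions whose underlying graph is a forest has reflection length equal to its number of edges, independently of the order of the factors. By comparison, the verification that each leaf-deletion is an inversion-sorting cover, needed for the upper bound, is routine given the diagram chasing already established for Proposition \ref{prop:FortoNet}.
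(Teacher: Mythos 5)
Your lower bound is correct, and it actually supplies details that the paper itself leaves implicit: covers are right multiplications by position transpositions, so any cover chain from $\pi$ to $\nu(F)$ has length at least the reflection length of $\pi^{-1}\nu(F)=\widetilde{\kappa}(F)^{-1}$; your bipartite-cycle argument (the top-most, then right-most, cell of a would-be cycle would have a pointed cell below it and a pointed cell to its left, violating (F1)) correctly shows the label graph is a forest, so the $|F|$-fold transposition factorization of $\widetilde{\kappa}(F)$ is reduced and $\ell_R(\widetilde{\kappa}(F))=|F|$. The paper's proof, by contrast, simply asserts that $l(\pi,\pi\circ\widetilde{\kappa}(F)^{-1})$ equals the number of transpositions in $\widetilde{\kappa}(F)^{-1}$, taking both inequalities for granted, so on this half you are more rigorous than the source.

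The upper bound, however, contains a false step. The values exchanged when a leaf is deleted are the \emph{current} labels at the west edge of its row and the south edge of its column, which after earlier deletions need not be the original pair $(p_c,q_c)$ (already in Example \ref{ex:forest} the third deletion exchanges $3$ and $4$ although the deleted cell carries the label $(2,5)$), and the exchange need not sort an inversion. Concretely, take $\epsilon=++--$, so $\pi=3412$, and the forest $F_2$ of Eq. (\ref{eq:Forcross}), with pointed cells labelled $(1,3)$, $(2,4)$, $(1,4)$ and a crossing cell $(2,3)$. Its single binary tree is rooted at the cell labelled $(1,4)$, so every admissible deletion order removes the two leaves first and the root last, giving the walk $3412\to 1432\to 1234\to 1324$ or $3412\to 3214\to 1234\to 1324$. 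The final exchange swaps the values $2$ and $3$ sitting at positions $2$ and $3$ of the word $1234$, \emph{creating} an inversion; indeed the walk passes through the identity, which is a maximal element of $B(\pi)$, so it cannot be a saturated chain of covers. Thus your deletion chain establishes neither $\nu(F)\in B(\pi)$ nor $l(\pi,\nu(F))\le |F|$. The conclusion is still true here --- the monotone chain $3412\to 3142\to 1342\to 1324$ has length $3=|F_2|$ --- but its steps do not come from the leaf-deletion process, so the upper bound requires a genuinely different construction (e.g., showing that as long as $\rho\neq\nu(F)$ some transposition in a reduced factorization of $\rho^{-1}\nu(F)$ sorts an inversion of $\rho$), which your proposal does not provide.
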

\begin{proof}
Since $\nu(F)=\mu(F)^{-1}$ from Proposition \ref{prop:numu}, 
we have 
\begin{align}
\label{eq:lnuF}
l(\pi,\nu(F))=l(\pi,\pi\circ\widetilde{\kappa}(F)^{-1}),
\end{align}
where we have used the definition of $\mu$ given in Eq. (\ref{eq:defmu}). 
The right hand side of Eq. (\ref{eq:lnuF}) is equal to the 
number of transpositions in $\widetilde{\kappa}(F)^{-1}$.
We consider the reconnection of semi-infinite lines which start from 
pointed cells as in Figure \ref{fig:reconnect}.
This reconnection corresponds to a transposition in $\widetilde{\kappa}(F)^{-1}$.
We have $|F|$ pointed cells in $F$, which implies that 
the number of transpositions in $\widetilde{\kappa}(F)^{-1}$ is equal 
to $|F|$.
As a summary, we have Eq. (\ref{eq:Finl}).
\end{proof}

\begin{example}
Consider the forest 
\begin{align*}
F=
\tikzpic{-0.5}{[scale=0.45]
\draw(0,0)--(3,0)(0,1)--(3,1)(0,2)--(3,2)(0,3)--(2,3);
\draw(0,0)--(0,3)(1,0)--(1,3)(2,0)--(2,3)(3,0)--(3,2);
\draw(-0.5,2.5)node{$3$}(-0.5,1.5)node{$5$}(-0.5,0.5)node{$6$};
\draw(0.5,-0.5)node{$1$}(1.5,-0.5)node{$2$}(2.5,-0.5)node{$4$};
\draw(0.5,0.5)node{$\bullet$}(1.5,1.5)node{$\bullet$}(2.5,0.5)node{$\bullet$}
(0.5,2.5)node{$\bullet$};
}
\end{align*}
We have two binary trees in $F$, and $\nu(F)=123456$ by a simple calculation. 
In fact, we have a sequence of permutations
\begin{align*}
356124\xrightarrow{(1,3)}156324\xrightarrow{(2,5)}126354
\xrightarrow{(4,6)}124356\xrightarrow{(3,4)}123456.
\end{align*}
The number of transpositions is four, which is equal to the number of 
pointed cells in $F$.
Note that the we have several sequences of permutations from $356124$ to 
$123456$, but we always have $l(356124,123456)=4$.
\end{example}

\begin{remark}
Given a forest $F$, we have two permutations for $F$: one is $\kappa(F)$, and 
the other is $\nu(F)$. 
The map $\kappa$ reflects the sum of the numbers of pointed cells and crossing cells in $F$, 
that is, the number of directed edges in the corresponding network.
On the other hand, $\nu(F)$ reflects only the number of pointed cells in $F$.
This difference comes from taking into account a reconnection of semi-infinite lines 
of degree four as in Figure \ref{fig:reconnect}, or not.
\end{remark}

\section{Shellability and M\"obius functions}
\label{sec:shell}
We briefly recall the notions related to the shellability 
following \cite{Bjo80,BjoGarSta82,BjoWac83}. 
Let $P$ be a poset and denote by $C(P)$ the covering relations, 
$C(P):=\{(x,y)\in P\times P | x\lessdot y\}$.
An {\it edge-labeling} of $P$ is a map $\lambda:C(P)\rightarrow\Lambda$ where
$\Lambda$ is some poset. In this paper, we consider only the case 
$\Lambda=\mathbb{N}$.
We assign a non-negative integer to an each edge of the Hasse diagram of $P$.
Let $c:x_0\lessdot x_1\lessdot\ldots\lessdot x_{k}$ be an unrefinable chain 
in $P$.
An edge-labeling $\lambda$ is called {\it rising} if
$\lambda(x_{0},x_1)\le \lambda(x_1,x_2)\le \ldots\le \lambda(x_{k-1},x_k)$.

\begin{defn}[Definition 2.1 in \cite{Bjo80}]
\label{defn:RLE}
We define an $R$-labeling and $EL$-labeling as follows.
\begin{enumerate}
\item An edge-labeling $\lambda$ is an $R$-labeling if 
there exists a unique unrefinable chain $c:x=x_0\lessdot x_1\lessdot\ldots\lessdot x_k=y$
whose edge-labeling is rising for any interval $[x,y]$ in P.
\item $\lambda$ is called an $EL$-labeling if
\begin{enumerate}
\item $\lambda$ is an $R$-labeling,
\item for every interval $[x,y]$, there is a unique unrefinable chain $c$ and if 
$x\lessdot z\le y$ and $z\neq x_1$, then $\lambda(x,x_1)<\lambda(x,z)$.
\end{enumerate}
\end{enumerate}
\end{defn}
The condition (2b) means that the unique rising chain $c$ is lexicographically 
first compared to other chains.

\begin{defn}[\cite{Bjo80,BjoWac83}]
\label{defn:LEshellable}
A poset is lexicographically shellable if it is graded and 
admits an $EL$-labeling.
\end{defn}

To show that $\mathcal{P}(n;\epsilon)$ is shellable, 
we will construct an explicit $EL$-labeling on the lattice $\mathcal{P}(n;\epsilon)$.
Suppose $x\lessdot y$.
The edge-labeling $\lambda(x,y)$ is given by 
\begin{align}
\label{eq:lambda}
\lambda(x,y):=\mathcal{E}(y)\setminus\mathcal{E}(x),
\end{align}
where $\mathcal{E}(x)$ is the set of directed edges in $x$.
This definition is well-defined since $|\mathcal{E}(y)|=|\mathcal{E}(x)|+1$ and $x\lessdot y$.

Let $\overline{\mathcal{E}(\epsilon)}$ be the set of directed edges 
in $N_{\mathrm{max}}(\epsilon)$.
We define a linear order on the directed edges in $\overline{\mathcal{E}(\epsilon)}$ as follows.

\begin{defn}
Suppose $(i,j),(k,l)\in\overline{\mathcal{E}(\epsilon)}$. 
Then, we define an order of directed edges by 
\begin{align}
\label{eq:defLambda}
(i,j)<(k,l),
\end{align}
if $j<l$, or if $j=l$ and $i>k$.
\end{defn}

\begin{example}
Let $\epsilon=+-++--$. 
We have seven possible directed edges associated to $\epsilon$, i.e.,
$|\overline{\mathcal{E}(\epsilon)}|=7$.
We have the following order of labels: 
\begin{align*}
(1,2)<(4,5)<(3,5)<(1,5)<(4,6)<(3,6)<(1,6).
\end{align*}	
\end{example}

We consider a subposet which has a crossing as in Figure \ref{fig:LE}.
The integer labels $1,2$ and $3$ stand for the directed edges
$(2,3)$, $(1,3)$ and $(2,4)$ respectively.
\begin{figure}[ht]
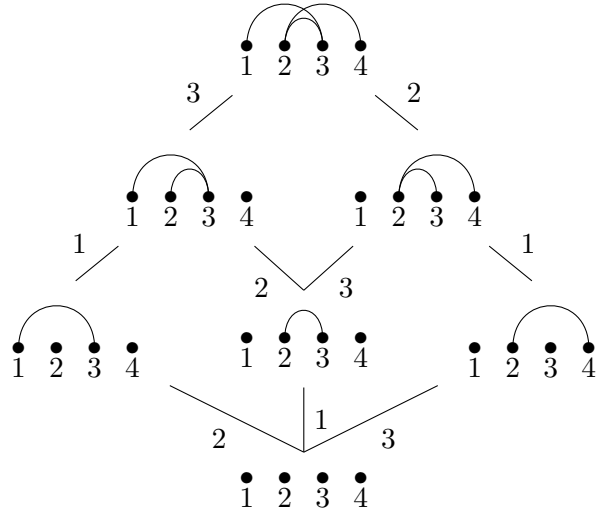

\tikzpic{-0.5}{
\node(0)at(0,-6){
\tikzpic{-0.5}{[scale=0.5]
\foreach \x in {1,2,3,4}{
\draw(\x,0)node{$\bullet$}node[anchor=north]{$\x$};
}}};
\node(1)at(-3,-4){
\tikzpic{-0.5}{[scale=0.5]
\foreach \x in {1,2,3,4}{
\draw(\x,0)node{$\bullet$}node[anchor=north]{$\x$};
}
\draw(1,0)..controls(1,1.5)and(3,1.5)..(3,0);
}};
\node(2)at(0,-4){
\tikzpic{-0.5}{[scale=0.5]
\foreach \x in {1,2,3,4}{
\draw(\x,0)node{$\bullet$}node[anchor=north]{$\x$};
}
\draw(2,0)..controls(2,1)and(3,1)..(3,0);
}};
\node(3)at(3,-4){
\tikzpic{-0.5}{[scale=0.5]
\foreach \x in {1,2,3,4}{
\draw(\x,0)node{$\bullet$}node[anchor=north]{$\x$};
}
\draw(2,0)..controls(2,1.5)and(4,1.5)..(4,0);
}};
\node(4)at(-1.5,-2){
\tikzpic{-0.5}{[scale=0.5]
\foreach \x in {1,2,3,4}{
\draw(\x,0)node{$\bullet$}node[anchor=north]{$\x$};
}
\draw(1,0)..controls(1,1.5)and(3,1.5)..(3,0);
\draw(2,0)..controls(2,1)and(3,1)..(3,0);
}};
\node(5)at(1.5,-2){
\tikzpic{-0.5}{[scale=0.5]
\foreach \x in {1,2,3,4}{
\draw(\x,0)node{$\bullet$}node[anchor=north]{$\x$};
}
\draw(2,0)..controls(2,1.5)and(4,1.5)..(4,0);
\draw(2,0)..controls(2,1)and(3,1)..(3,0);
}};
\node(6)at(0,0){
\tikzpic{-0.5}{[scale=0.5]
\foreach \x in {1,2,3,4}{
\draw(\x,0)node{$\bullet$}node[anchor=north]{$\x$};
}
\draw(1,0)..controls(1,1.5)and(3,1.5)..(3,0);
\draw(2,0)..controls(2,1.5)and(4,1.5)..(4,0);
\draw(2,0)..controls(2,1)and(3,1)..(3,0);
}};
\draw(0.north)to node[anchor=north east]{$2$}(1);
\draw(0.north)to node[anchor=west]{$1$}(2);
\draw(0.north)to node[anchor=north west]{$3$}(3);
\draw(1.north)to node[anchor=south east]{$1$}(4);
\draw(2.north)to node[anchor=north east]{$2$}(4);
\draw(2.north)to node[anchor=north west]{$3$}(5);
\draw(3.north)to node[anchor=south west]{$1$}(5);
\draw(4.north)to node[anchor=south east]{$3$}(6);
\draw(5.north)to node[anchor=south west]{$2$}(6);
}
\caption{A subposet which has a crossing.}
\label{fig:LE}
\end{figure}
It is clear that the labels in the subposet in Figure \ref{fig:LE} 
give an $EL$-labeling. Note that we have no decreasing chain from $\hat{0}$ to $\hat{1}$.
A subposet in $P(n;\epsilon)$ is not in general Eulerian as in Figure \ref{fig:LE}.
In some cases, a subposet is a Boolean lattice, and hence Eulerian.
We come back to this point when we compute the M\"obius function of an interval 
$[x,y]$ in $P(n;\epsilon)$.

\begin{lemma}
\label{lemma:EL}
A edge-labeling $\lambda$ defined in Eq. (\ref{eq:lambda}) is an $EL$-labeling.
\end{lemma}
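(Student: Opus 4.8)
The plan is to verify the two requirements of Definition~\ref{defn:RLE} directly from the linear order on edges fixed in Eq.~\eqref{eq:defLambda}. Fix an interval $[x,y]$ and write $D:=\mathcal{E}(y)\setminus\mathcal{E}(x)$ for the set of edges that must be adjoined along any saturated chain from $x$ to $y$. Since each cover adds exactly one edge and the union of the added edges is $D$, every saturated chain in $[x,y]$ uses each edge of $D$ exactly once, and such a chain is completely determined by $x$ together with the order in which the edges of $D$ are adjoined. So the whole question reduces to understanding which orderings of $D$ produce admissible intermediate networks.

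First I would exhibit the candidate rising chain: adjoin the edges of $D$ in increasing $<$-order $e_1<e_2<\cdots<e_m$, setting $z_t:=x\cup\{e_1,\dots,e_t\}$. The key lemma is that each $z_t$ is again a network in $\mathcal{P}(n;\epsilon)$. Conditions (A1)--(A4) are immediate since all edges respect $\epsilon$, so only (B1) is at issue, and as $z_{t-1}$ is already admissible any new violation must involve the freshly added edge $e_t=(i,k)$. If $e_t$ were a \emph{forced} third edge of two edges already present in $z_{t-1}$, then $z_{t-1}$ itself would have violated (B1); hence $e_t$ can appear only as one of the two \emph{crossing} edges. In the crossing $(i,k),(j,l)$ with $i<j<k<l$ the forced edge is $(j,k)$, which has the same sink $k$ and larger source, so $(j,k)<(i,k)$ by Eq.~\eqref{eq:defLambda}; in the crossing $(a,b),(i,k)$ with $a<i<b<k$ the forced edge is $(i,b)$, whose sink $b$ is smaller, so $(i,b)<(i,k)$. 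In either case the forced edge lies in $\mathcal{E}(y)$ by (B1) applied to $y$, is strictly smaller than $e_t$, and therefore already belongs to $z_{t-1}$ (being either in $x$ or among $e_1,\dots,e_{t-1}$). Thus no violation is created, $z_{t-1}\lessdot z_t$, and the chain is saturated with strictly increasing, hence rising, labels.

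Next I would establish that this is the \emph{unique} rising chain and deduce the $R$-labeling property. Because the labels of any saturated chain in $[x,y]$ are the distinct edges of $D$, a rising label sequence is necessarily strictly increasing, and the only strictly increasing arrangement of $D$ is the sorted one $e_1<\cdots<e_m$; since a chain is recovered from its label sequence, the rising chain is unique. For the $EL$-condition, note that the map $z\mapsto\lambda(x,z)$ is injective on the covers of $x$ inside $[x,y]$, because $\mathcal{E}(z)=\mathcal{E}(x)\cup\{\lambda(x,z)\}$, and every such label lies in $D$. The start of the rising chain is $x_1=x\cup\{e_1\}$ with $\lambda(x,x_1)=e_1=\min D$, and the case $t=1$ of the admissibility lemma shows $x_1\in[x,y]$. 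Hence any cover $x\lessdot z\le y$ with $z\neq x_1$ has $\lambda(x,z)\in D$ different from $e_1$, so $\lambda(x,x_1)<\lambda(x,z)$. As this holds for every interval, $\lambda$ is an $EL$-labeling.

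The main obstacle is the admissibility lemma for the $z_t$: everything hinges on the precise choice in Eq.~\eqref{eq:defLambda} (sinks increasing, then sources decreasing) being exactly the order that forces every (B1)-mandated third edge to precede the edge that triggers it. Once this compatibility between the forcing rule (B1) and the linear order is pinned down, the uniqueness of the rising chain and the lexicographic-minimality condition follow formally.
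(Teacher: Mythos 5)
Your proof is correct and follows essentially the same route as the paper: the whole argument rests on the observation that the linear order of Eq.~(\ref{eq:defLambda}) places every (B1)-forced edge $(j,k)$ strictly before both crossing edges $(i,k)$ and $(j,l)$, so the increasing arrangement of $\mathcal{E}(y)\setminus\mathcal{E}(x)$ is the unique rising chain and is lexicographically first. The only difference is one of detail, not of method: you explicitly verify that each prefix $z_t$ is an admissible network (the step the paper's terse proof leaves implicit), which is a worthwhile elaboration but not a new approach.
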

\begin{proof}
We first show that $\lambda$ is an $R$-labeling.
Since a network has no loops and multiple edges, each directed edge appears 
exactly once in a chain of $[\hat{0},\hat{1}]$.
For a crossing edge $(j,k)$, we always have the order $(j,k)<(i,k)<(j,l)$ with 
$i<j<k<l$. 
Since this order is compatible with the order of edge-labels, we have a unique 
rising chain for any interval $[x,y]$. Thus, $\lambda$ is an $R$-labeling.

Secondly, we show that $\lambda$ satisfies the condition (2b) in Definition \ref{defn:RLE}.
By the same reason as above, the unique rising chain is lexicographically first 
compared to other chains. This completes the proof.
\end{proof}

Definition \ref{defn:LEshellable} and Lemma \ref{lemma:EL} imply the following.
\begin{theorem}
\label{thrm:leshellable}
The lattice $\mathcal{P}(n;\epsilon)$ is lexicographically shellable.
\end{theorem}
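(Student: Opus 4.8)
The plan is to verify the two defining hypotheses of lexicographic shellability recorded in Definition \ref{defn:LEshellable}---that $\mathcal{P}(n;\epsilon)$ is graded and that it admits an $EL$-labeling---and then to invoke the definition directly. Both ingredients have in fact already been assembled, so the proof will be a short formal assembly rather than a new argument.

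First I would recall that $\mathcal{P}(n;\epsilon)$ was shown above to be a finite graded lattice: the function $\rho$, which counts the number of directed edges of a network, is the rank function, and the covering relation $x\lessdot y$ is exactly the addition of a single directed edge with $\mathcal{E}(x)\subset\mathcal{E}(y)$ and $\rho(y)=\rho(x)+1$. This settles the gradedness requirement with no further work.

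Second, the required $EL$-labeling is furnished by Lemma \ref{lemma:EL}, which establishes that the edge-labeling
\begin{align*}
\lambda(x,y)=\mathcal{E}(y)\setminus\mathcal{E}(x)
\end{align*}
of Eq. (\ref{eq:lambda})---well defined since each covering adds exactly one edge, and valued in the fixed linear order on $\overline{\mathcal{E}(\epsilon)}$---is both an $R$-labeling and satisfies the lexicographic-minimality condition (2b) of Definition \ref{defn:RLE}. Having both the grading and an explicit $EL$-labeling, Definition \ref{defn:LEshellable} applies verbatim and the theorem follows.

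Consequently there is no genuine obstacle remaining at the level of this theorem: the substantive content lies entirely in Lemma \ref{lemma:EL}, in particular in checking that the canonical ordering of a crossing triple $(j,k)<(i,k)<(j,l)$ for $i<j<k<l$ is compatible with the edge order of Eq. (\ref{eq:defLambda}), so that a unique rising, lexicographically first chain exists in every interval. That lemma being granted, the present statement is a direct consequence of the preceding results, and I would phrase its proof as the single-line deduction combining Lemma \ref{lemma:EL} with Definition \ref{defn:LEshellable}.
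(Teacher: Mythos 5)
Your proposal is correct and matches the paper's own proof, which likewise deduces the theorem immediately from Definition \ref{defn:LEshellable} together with Lemma \ref{lemma:EL}, the gradedness having been established when $\mathcal{P}(n;\epsilon)$ was shown to be a finite graded lattice. You correctly identify that all substantive content resides in Lemma \ref{lemma:EL}, so nothing further is needed here.
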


A direct consequence of Theorem \ref{thrm:leshellable} (see also \cite{Bjo80}) is the 
following corollary.
\begin{cor}
\label{cor:CM}
The lattice $\mathcal{P}(n;\epsilon)$ is shellable, hence Cohen--Macaulay.
\end{cor}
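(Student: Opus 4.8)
The plan is to deduce the statement from the general machinery of lexicographic shellability, so that essentially no new combinatorial work is required beyond Theorem \ref{thrm:leshellable}. The chain of implications I would invoke is the classical one: an $EL$-labeling of a bounded graded poset induces a shelling of its order complex, and a shellable (pure) complex is Cohen--Macaulay.

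First, I would recall the precise content already established. By Lemma \ref{lemma:EL}, the graded lattice $\mathcal{P}(n;\epsilon)$ admits the $EL$-labeling $\lambda$ of Eq. (\ref{eq:lambda}), and by Definition \ref{defn:LEshellable} this is exactly the assertion that $\mathcal{P}(n;\epsilon)$ is lexicographically shellable, which is Theorem \ref{thrm:leshellable}. I would also note explicitly that the poset is bounded (it has a minimum $\hat{0}$ and a maximum $\hat{1}$) and graded, so that the hypotheses of Bj\"orner's theorem apply verbatim.

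Second, I would invoke Bj\"orner's result \cite{Bjo80}: if a bounded graded poset $P$ admits an $EL$-labeling, then the maximal chains of the proper part $\overline{P}:=P\setminus\{\hat{0},\hat{1}\}$, linearly ordered by the lexicographic order of their label sequences, form a shelling of the order complex $\Delta(\overline{P})$. Each such maximal chain corresponds to an unrefinable chain from $\hat{0}$ to $\hat{1}$; the $R$-labeling property supplies a unique rising chain in every interval, and the lexicographic-first condition (2b) of Definition \ref{defn:RLE} guarantees that the intersection of each facet with the union of the earlier facets is generated by the descents of its label sequence, which is precisely the defining property of a shelling order.

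Finally, I would appeal to the standard fact that a shellable pure simplicial complex is Cohen--Macaulay over any field and over $\mathbb{Z}$. Since $\mathcal{P}(n;\epsilon)$ is graded, $\Delta(\overline{P})$ is pure, so the shelling from the previous step yields Cohen--Macaulayness. I expect no genuine obstacle at this stage: all the substantive content sits in the construction and verification of the $EL$-labeling in Lemma \ref{lemma:EL}, and the corollary is a purely formal consequence of the cited theorems. The only point warranting a word of care is confirming that the poset meets the boundedness and gradedness hypotheses, which has already been recorded earlier in this section.
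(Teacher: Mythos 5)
Your proposal is correct and follows exactly the paper's route: the paper derives the corollary as a direct consequence of Theorem \ref{thrm:leshellable} together with Bj\"orner's theorem \cite{Bjo80} that an $EL$-labeling of a bounded graded poset yields a shelling, hence Cohen--Macaulayness. Your write-up merely spells out the standard machinery that the paper leaves implicit, including the boundedness and purity checks, so there is nothing to add.
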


The first application of the $EL$-labeling introduced above is to show 
that the interval $[x,y]$ in $\mathcal{P}(n;\epsilon)$ is supersolvable.
We recall the definition of $\mathcal{S}_{n}$ $EL$-labeling, or snelling for short.
\begin{defn}[Definition 2.2 in \cite{McNam03}]
An $EL$-labeling $\lambda$ of $P$ is said to be an {\it $\mathcal{S}_{n}$ $EL$-labeling}, 
or {\it snelling}, if the map from $i$ to $\lambda(x_{i-1},x_{i})$ is a permutation of $[n]$ for 
every maximal chain $\hat{0}=x_0<x_1<\cdots<x_{n}=\hat{1}$.
\end{defn}

\begin{lemma}
\label{lemma:snelling}
Let $[x,y]$ be an interval in $\mathcal{P}(n;\epsilon)$. 
The $EL$-labeling for $[x,y]$ is snelling.
\end{lemma}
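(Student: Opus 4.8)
The plan is to unwind the definition of snelling for the interval $[x,y]$ and then observe that the required permutation property is an immediate consequence of the fact that the edge sets grow monotonically along a chain. Write $r:=\rho(y)-\rho(x)$, so that $[x,y]$ is a graded poset of rank $r$ with bottom $x$ and top $y$. By Lemma~\ref{lemma:EL} the labeling $\lambda$ is an $EL$-labeling of the whole lattice, and the restriction of an $EL$-labeling to any interval is again an $EL$-labeling of that interval; hence it suffices to verify the remaining snelling condition, namely that along every maximal chain of $[x,y]$ the labels form a permutation of a fixed set of size $r$, which I will then identify with $[r]$ through the linear order on $\overline{\mathcal{E}(\epsilon)}$ of Eq.~(\ref{eq:defLambda}).

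First I would fix a maximal chain $x=z_0\lessdot z_1\lessdot\cdots\lessdot z_r=y$ in $[x,y]$. By the definition of the covering relation each step adds exactly one directed edge, so by Eq.~(\ref{eq:lambda}) the label $\lambda(z_{i-1},z_i)=\mathcal{E}(z_i)\setminus\mathcal{E}(z_{i-1})$ is a single edge $e_i\in\overline{\mathcal{E}(\epsilon)}$. Because $x\le z_{i-1}\le z_i\le y$ forces $\mathcal{E}(x)\subseteq\mathcal{E}(z_{i-1})\subsetneq\mathcal{E}(z_i)\subseteq\mathcal{E}(y)$, every $e_i$ lies in $\mathcal{E}(y)\setminus\mathcal{E}(x)$; moreover, since an edge, once present, persists as we move up the chain, no edge is added twice, so the labels $e_1,\dots,e_r$ are pairwise distinct.

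Next I would count: we have $r=\rho(y)-\rho(x)=|\mathcal{E}(y)|-|\mathcal{E}(x)|=|\mathcal{E}(y)\setminus\mathcal{E}(x)|$ distinct labels, all contained in the $r$-element set $\mathcal{E}(y)\setminus\mathcal{E}(x)$. Hence $\{e_1,\dots,e_r\}=\mathcal{E}(y)\setminus\mathcal{E}(x)$, and this set is the \emph{same} for every maximal chain of $[x,y]$. Relabeling $\mathcal{E}(y)\setminus\mathcal{E}(x)$ by $[r]$ in the order-preserving way inherited from the linear order of Eq.~(\ref{eq:defLambda}), the map $i\mapsto\lambda(z_{i-1},z_i)$ becomes a bijection onto $[r]$, i.e.\ a permutation of $[r]$. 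This is exactly the snelling condition for the interval $[x,y]$.

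I do not expect a serious obstacle: the whole argument rests on the monotonicity of the sets $\mathcal{E}(\cdot)$ along a chain, and the $EL$-property has already been secured in Lemma~\ref{lemma:EL}. The only point requiring care is the bookkeeping that translates McNamara's condition, stated for a full poset of rank $n$, to the interval $[x,y]$ of rank $r$: one must check not merely that the label set has constant size but that it is literally the fixed subset $\mathcal{E}(y)\setminus\mathcal{E}(x)$ for every maximal chain, before identifying it order-isomorphically with $[r]$.
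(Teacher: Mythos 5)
Your proof is correct and follows essentially the same route as the paper: both arguments observe that along any maximal chain of $[x,y]$ each edge of $\mathcal{E}(y)\setminus\mathcal{E}(x)$ appears exactly once as a label, and then use the linear order of Eq.~(\ref{eq:defLambda}) to read the chain as a permutation of $[\rho(x,y)]$. Your write-up merely makes explicit the monotonicity and counting steps that the paper's terse proof leaves implicit, which is a fair expansion rather than a different method.
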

\begin{proof}
Let $\mathcal{E}(y\backslash x)=\mathcal{E}(y)\setminus\mathcal{E}(x)$ be the 
set of directed edges for $[x,y]$.
From the definition of the covering relation, any edge in $\mathcal{E}(y\backslash x)$
appears exactly once in a maximal chain in $[x,y]$ as an edge-label.
We have a linear order of directed edges as in Eq. (\ref{eq:defLambda}), each 
maximal chain gives a permutation in $[n]$ where $n:=\rho(x,y)$.
\end{proof}

\begin{defn}[Definition 1.1 in \cite{Sta72}, \cite{McNam03}]
A finite lattice  $L$ is said to be {\it supersolvable} if it contains a 
maximal chain, called an $M$-chain of $L$, which together with 
any other chain in $L$ generates a distributive sublattice.
\end{defn}
One of the main results in \cite{McNam03} is as follows.
\begin{theorem}[Theorem 1 in \cite{McNam03}]
\label{thrm:SS}
A finite graded lattice of rank $n$ is supersolvable if and only if 
it is $\mathcal{S}_{n}$ $EL$-shellable.
\end{theorem}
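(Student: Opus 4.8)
The plan is to establish the two implications of the biconditional separately, for a finite graded lattice $L$ of rank $n$ with $\hat{0}$ and $\hat{1}$. The unifying idea is the interplay between the M-chain of a supersolvable lattice and the unique rising maximal chain produced by an $EL$-labeling: in one direction the M-chain manufactures a snelling, and in the other the rising chain turns out to be an M-chain.

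For the direction from supersolvability to $\mathcal{S}_{n}$ $EL$-shellability, I would start from an M-chain $m_{0}=\hat{0}\lessdot m_{1}\lessdot\cdots\lessdot m_{n}=\hat{1}$ and build Stanley's edge-labeling from it: for a cover $x\lessdot y$ set $\lambda(x,y):=\min\{i:y\le x\vee m_{i}\}$. This is well defined, since $i=n$ always works as $x\vee m_{n}=\hat{1}\ge y$, and it lands in $[n]$, since $x\vee m_{0}=x$ and $x<y$ rule out $i=0$. The defining property of an M-chain, namely that $m_{0}\lessdot\cdots\lessdot m_{n}$ together with any chain generates a distributive sublattice, forces the ``entering levels'' along any maximal chain to be pairwise distinct, so each maximal chain receives each element of $[n]$ exactly once; this is precisely the $\mathcal{S}_{n}$ condition. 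The same distributivity supplies the unique-rising-chain and lexicographically-first properties, so $\lambda$ is an $EL$-labeling.

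For the converse, which I expect to be the main obstacle, I would first isolate the candidate M-chain. Applying the $R$-labeling property to $[\hat{0},\hat{1}]$ yields a unique maximal chain with weakly rising labels, and the snelling hypothesis forces its label word to be exactly $1<2<\cdots<n$; call this chain $m_{0}\lessdot m_{1}\lessdot\cdots\lessdot m_{n}$. The task is to show it is an M-chain. I would route this through \emph{left modularity}: an element $m$ is left modular if $(x\vee m)\wedge y=x\vee(m\wedge y)$ for all $x\le y$, and the standard reformulation of supersolvability says that a graded lattice possessing a maximal chain of left modular elements is supersolvable. It therefore suffices to prove that each $m_{i}$ is left modular, which I would attempt by induction on rank, reducing the modular identity to a statement about how the two maximal chains of a rank-two interval $[x,y]$ are labeled and showing that the permutation constraint on labels excludes any failure of the identity.

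The hard part is exactly this local analysis. One must show that in each rank-two interval the snelling and $EL$ hypotheses pin down the admissible label patterns on the two maximal chains tightly enough to force $(x\vee m_{i})\wedge y=x\vee(m_{i}\wedge y)$, and then propagate the identity from covers to arbitrary pairs $x\le y$, checking at each stage that the sublattice generated by $m_{0}\lessdot\cdots\lessdot m_{n}$ and a chain acquires no elements beyond those predicted by the corresponding distributive lattice of order ideals. The delicacy is that the hypotheses constrain labels only \emph{along} chains, so the argument must repeatedly invoke uniqueness of rising chains in every subinterval to rule out stray joins and meets; once left modularity of the full chain is in hand, the equivalence with supersolvability closes the proof.
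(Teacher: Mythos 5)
First, note that the paper does not prove this statement at all: it is quoted verbatim as Theorem 1 of \cite{McNam03} and used as a black box to deduce supersolvability of intervals of $\mathcal{P}(n;\epsilon)$. So there is no internal proof to compare against; the relevant benchmark is McNamara's published argument. Your forward direction is the standard and essentially correct route: Stanley's labeling $\lambda(x,y)=\min\{i : y\le x\vee m_{i}\}$ built from an $M$-chain, with the permutation property and the unique rising chain extracted from the distributive sublattice generated by the $M$-chain and a given maximal chain. That half predates \cite{McNam03} and your sketch of it, while compressed, is sound.

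The converse, however, contains a genuine gap, and in two ways. First, the ``standard reformulation'' you invoke --- that a finite graded lattice possessing a maximal chain of left modular elements is supersolvable --- is not an independent off-the-shelf fact: it is the main theorem of the later McNamara--Thomas work on edge-labellings and left modularity, and its known proof proceeds by constructing an $\mathcal{S}_{n}$ $EL$-labeling from the left modular chain and then applying precisely the theorem you are trying to prove. As routed, your argument is therefore circular unless you supply an independent proof of that reformulation, which is of comparable difficulty to the original statement. Second, even granting that reformulation, the actual mathematical content of your plan --- showing that the snelling and $EL$ hypotheses force $(x\vee m_{i})\wedge y=x\vee(m_{i}\wedge y)$ via a rank-two label analysis, and then propagating this identity from covers to arbitrary $x\le y$ --- is explicitly deferred (``the hard part is exactly this local analysis'') and never carried out; this deferred step is essentially the entire converse. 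By contrast, McNamara proves the hard direction directly, by induction on rank, showing that the unique rising maximal chain itself serves as an $M$-chain, i.e., that together with any other maximal chain it generates a distributive sublattice, with the $\mathcal{S}_{n}$ condition controlling the labels in all subintervals; no appeal to a prior left-modularity equivalence is needed. Your identification of the rising chain $m_{0}\lessdot\cdots\lessdot m_{n}$ (whose label word must be $1,2,\ldots,n$) as the candidate $M$-chain is the right starting point, but as written the proposal stops where the proof must begin.
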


\begin{cor}
An interval in $\mathcal{P}(n;\epsilon)$ is supersolvable.
\end{cor}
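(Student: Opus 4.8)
The plan is to deduce the corollary directly from Theorem \ref{thrm:SS}, which characterizes supersolvable finite graded lattices as exactly the $\mathcal{S}_{n}$ $EL$-shellable ones. First I would observe that the interval $[x,y]$ is itself a finite graded lattice: since $\mathcal{P}(n;\epsilon)$ is a finite graded lattice and $x\le z\le y$ holds for every $z\in[x,y]$, the meet and join of any two elements of $[x,y]$ computed in $\mathcal{P}(n;\epsilon)$ again lie in $[x,y]$, so $[x,y]$ inherits the lattice structure. Its rank function is the restriction of $\rho$, and its rank is $r:=\rho(x,y)=\rho(y)-\rho(x)$.

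Next I would record that the $EL$-labeling $\lambda$ of Eq. (\ref{eq:lambda}) restricts to an $EL$-labeling of $[x,y]$; this is immediate because the defining conditions of an $EL$-labeling in Definition \ref{defn:RLE} are phrased interval by interval, so Lemma \ref{lemma:EL} applies verbatim to $[x,y]$. By Lemma \ref{lemma:snelling} this restricted labeling is snelling: along any maximal chain $x=z_0\lessdot z_1\lessdot\cdots\lessdot z_{r}=y$ the labels $\lambda(z_{i-1},z_i)$ are exactly the $r$ directed edges of $\mathcal{E}(y)\setminus\mathcal{E}(x)$, each occurring once, and the linear order of Eq. (\ref{eq:defLambda}) identifies this edge set with $[r]$, so the labels form a permutation of $[r]$. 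Hence $[x,y]$ is $\mathcal{S}_{r}$ $EL$-shellable.

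Finally I would invoke Theorem \ref{thrm:SS} with $n$ replaced by $r$: a finite graded lattice of rank $r$ is supersolvable if and only if it is $\mathcal{S}_{r}$ $EL$-shellable. Since $[x,y]$ is such a lattice and is $\mathcal{S}_{r}$ $EL$-shellable by the previous step, it is supersolvable, which is the assertion.

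The argument is essentially a repackaging of results already established, so I do not expect a serious obstacle. The only point needing a little care is the bookkeeping of ranks: the snelling condition must be read with respect to the rank $r=\rho(x,y)$ of the interval rather than the rank of the ambient poset, and one must confirm that the total order of Eq. (\ref{eq:defLambda}) indeed restricts to a bijection between $\mathcal{E}(y)\setminus\mathcal{E}(x)$ and $[r]$, so that Theorem \ref{thrm:SS} applies in the stated form.
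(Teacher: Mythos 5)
Your proposal is correct and follows essentially the same route as the paper: restrict the $EL$-labeling of Eq.~(\ref{eq:lambda}) to $[x,y]$ (Lemma \ref{lemma:EL} via Theorem \ref{thrm:leshellable}), observe it is snelling by Lemma \ref{lemma:snelling}, and invoke Theorem \ref{thrm:SS}. Your extra care about the interval being a lattice and about reading the snelling condition with respect to the interval rank $r=\rho(x,y)$ is exactly the bookkeeping the paper handles inside Lemma \ref{lemma:snelling}, so nothing is missing.
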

\begin{proof}
From Theorem \ref{thrm:leshellable}, any interval $[x,y]$ in $\mathcal{P}(n;\epsilon)$ 
is $EL$-shellable. By Lemma \ref{lemma:snelling}, $[x,y]$ is snelling.
These imply that $[x,y]$ is $\mathcal{S}_{n}$ $EL$-shellable.
From Theorem \ref{thrm:SS}, $[x,y]$ is supersolvable.
\end{proof}

As the second application of the $EL$-labeling on the poset, we compute the M\"obius 
function of any interval $[x,y]$ in $P(n;\epsilon)$.
Let $L$ be a lattice. 
Then, the M\"obius function of a lattice $L$, $\mu:L\times L\rightarrow\mathbb{Z}$, 
is defined recursively by
\begin{align*}
\mu(x,y):=
\begin{cases}
1, & \text{ if } x=y, \\
-\sum_{x\le z<y}\mu(x,z), & \text{ if } x<y.
\end{cases}
\end{align*}
We define $\mu(P):=\mu(\hat{0},\hat{1})$.

The M\"obius function of $P$ and an edge-labeling are related as follows.
\begin{prop}[\cite{Bjo80,BjoGarSta82,Sta74}]
\label{prop:MobiusRlabel}
Suppose a poset $P$ admits an $R$-labeling $\lambda$.
When $x\le y$ in $P$, the value $(-1)^{\rho(x,y)}\mu(x,y)$ is equal to
the number of chains $x=x_0\lessdot x_1\lessdot\ldots\lessdot x_k=y$ 
such that
\begin{align}
\label{eq:dec}
\lambda(x_{0},x_1)\not\le\lambda(x_1,x_2)\not\le\ldots\not\le\lambda(x_{k-1},x_k).
\end{align}
\end{prop}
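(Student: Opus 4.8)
The plan is to deduce the statement from the theory of rank-selected chain counts, reducing everything to a single full rank set. Fix an interval $[x,y]$ of rank $n:=\rho(x,y)$. For each subset $S=\{a_1<\cdots<a_s\}\subseteq\{1,\ldots,n-1\}$ I would introduce the flag counting function $\alpha(S)$, defined as the number of chains $x=z_0<z_1<\cdots<z_s<z_{s+1}=y$ with $\rho(x,z_i)=a_i$ for $1\le i\le s$. The crucial first step is to prove that, \emph{because} $\lambda$ is an $R$-labeling, $\alpha(S)$ equals the number of unrefinable chains $m\colon x=m_0\lessdot m_1\lessdot\cdots\lessdot m_n=y$ whose descent set $D(m):=\{i : \lambda(m_{i-1},m_i)\not\le\lambda(m_i,m_{i+1})\}$ satisfies $D(m)\subseteq S$.

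The identity $\alpha(S)=\#\{m : D(m)\subseteq S\}$ I would establish by an explicit bijection. Given a chain of type $S$, replace each segment $[z_i,z_{i+1}]$ by its unique rising unrefinable chain, which exists by part (1) of Definition \ref{defn:RLE}; concatenation produces an unrefinable chain of $[x,y]$ all of whose descents lie among the break ranks in $S$. Conversely, cut an unrefinable chain $m$ with $D(m)\subseteq S$ at the ranks of $S$: each resulting segment has no interior descent, hence is rising, hence is \emph{the} unique rising chain of that subinterval, so $m$ is recovered from a unique chain of type $S$. This is the step where the $R$-labeling hypothesis is indispensable, and I expect it to be the main obstacle, since everything afterward is formal. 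Möbius inversion over the Boolean lattice of subsets then gives that $\beta(S):=\sum_{T\subseteq S}(-1)^{|S\setminus T|}\alpha(T)$ counts exactly the unrefinable chains with $D(m)=S$.

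To finish I would invoke Philip Hall's theorem, $\mu(x,y)=\sum_{k\ge 0}(-1)^k c_k$, where $c_k$ is the number of chains from $x$ to $y$ with $k$ edges. Since $c_k=\sum_{|S|=k-1}\alpha(S)$, this rewrites as $\mu(x,y)=\sum_{S\subseteq\{1,\ldots,n-1\}}(-1)^{|S|+1}\alpha(S)$. Substituting $\alpha(S)=\sum_{T\subseteq S}\beta(T)$ and interchanging the order of summation, the inner sum $\sum_{S\supseteq T}(-1)^{|S|+1}$ vanishes unless $T=\{1,\ldots,n-1\}$, so that $\mu(x,y)=(-1)^{n}\beta(\{1,\ldots,n-1\})$. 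Because $\beta(\{1,\ldots,n-1\})$ counts the unrefinable chains having a descent at every position, which are precisely the chains described by Eq. (\ref{eq:dec}), this yields $(-1)^{\rho(x,y)}\mu(x,y)=\#\{\text{chains satisfying Eq. (\ref{eq:dec})}\}$, as claimed. A quick sanity check on the Boolean lattice (with its standard labeling, the unique falling maximal chain and $\mu=(-1)^n$) confirms the signs.
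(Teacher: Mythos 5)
Your proof is correct. Note that the paper itself gives no proof of this proposition---it is quoted verbatim from the cited literature \cite{Bjo80,BjoGarSta82,Sta74}---and your argument is precisely the classical one found there: the bijection between chains with intermediate rank set $S$ and unrefinable chains with descent set contained in $S$ (splice in the unique rising chain of each subinterval, and conversely cut at the ranks of $S$, with uniqueness of rising chains making the two maps mutually inverse), inclusion--exclusion to define $\beta(S)$, and Philip Hall's theorem, under which the alternating sum collapses to $(-1)^{\rho(x,y)}\beta(\{1,\ldots,n-1\})$, the count of everywhere-descending maximal chains. All steps check, including the degenerate cases $\rho(x,y)\le 1$, and your use of $\not\le$ rather than $>$ means the argument is valid for an arbitrary label poset $\Lambda$, not just the case $\Lambda=\mathbb{N}$ to which the paper restricts.
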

Since we consider only $\Lambda=\mathbb{N}$, the condition (\ref{eq:dec}) 
is equivalent to
\begin{align*}
\lambda(x_{0},x_1)>\lambda(x_1,x_2)>\ldots>\lambda(x_{k-1},x_k).
\end{align*}

Let $x\le y$ be two elements in $\mathcal{P}(n;\epsilon)$.
We define the set of directed edges $\mathcal{E}^{\times}(y\backslash x)$ by
\begin{align*}
\mathcal{E}^{\times}(y\backslash x):=\{(j,k)\notin\mathcal{E}(x) | (i,k),(j,l)\in\mathcal{E}(y), i<j<k<l \}.
\end{align*}
In other words, $\mathcal{E}^{\times}(y\backslash x)$ is the set of edges which exist due to crossings of directed 
edges and in $\mathcal{E}(y)$ but not in $\mathcal{E}(x)$.
Consider the following statement:
\begin{enumerate}[(H1)]
\item $\mathcal{E}^{\times}(y\backslash x)\neq \emptyset$.
\end{enumerate}
Then, we can calculate the M\"obius functions for any interval $[x,y]$ in $\mathcal{P}(n;\epsilon)$.
\begin{theorem}
Let $x\le y$ be two elements in $\mathcal{P}(n;\epsilon)$.
The M\"obius function $\mu(x,y)$ is given by 
\begin{align}
\label{eq:Moebius}
\mu(x,y):=\begin{cases}
0, & \text{if  (H1) holds true}, \\
(-1)^{\rho(x,y)}, & \text{otherwise}.
\end{cases}
\end{align}
\end{theorem}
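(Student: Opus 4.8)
The plan is to apply Proposition \ref{prop:MobiusRlabel} to the $EL$-labeling $\lambda$ of Lemma \ref{lemma:EL}, which reduces the computation of $\mu(x,y)$ to counting the strictly decreasing maximal chains of the interval $[x,y]$. Since the label of a covering $x_{i-1}\lessdot x_i$ is the single directed edge $\mathcal{E}(x_i)\setminus\mathcal{E}(x_{i-1})$, the labels along any maximal chain of $[x,y]$ are exactly the edges of $\mathcal{E}(y)\setminus\mathcal{E}(x)$, each occurring once. Hence a maximal chain is strictly decreasing precisely when its edges are added in the \emph{unique} strictly decreasing order determined by Eq.~(\ref{eq:defLambda}). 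So there is at most one decreasing maximal chain, and the whole question becomes whether adding the edges of $\mathcal{E}(y)\setminus\mathcal{E}(x)$ in that decreasing order passes through valid networks at every step, i.e.\ whether every intermediate set $S$ with $\mathcal{E}(x)\subseteq S\subseteq\mathcal{E}(y)$ satisfies (B1). The conditions (A1)--(A4) hold automatically for any subset of $\mathcal{E}(N_{\mathrm{max}}(\epsilon))$ because the sources and sinks are fixed by $\epsilon$, so only (B1) is at issue.

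First I would treat the case where (H1) fails, i.e.\ $\mathcal{E}^{\times}(y\backslash x)=\emptyset$, and show that $[x,y]$ is then a Boolean lattice. Indeed, for any $S$ with $\mathcal{E}(x)\subseteq S\subseteq\mathcal{E}(y)$ and any crossing pair $(i,k),(j,l)\in S$ with $i<j<k<l$, condition (B1) for the network $y$ forces $(j,k)\in\mathcal{E}(y)$; since $\mathcal{E}^{\times}(y\backslash x)=\emptyset$ this edge cannot lie in $\mathcal{E}(y)\setminus\mathcal{E}(x)$, so $(j,k)\in\mathcal{E}(x)\subseteq S$ and (B1) holds for $S$. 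Thus every such $S$ is a network, the interval is Boolean of rank $\rho(x,y)$, and the decreasing ordering is in particular a valid maximal chain. There is then exactly one decreasing chain, so Proposition \ref{prop:MobiusRlabel} gives $(-1)^{\rho(x,y)}\mu(x,y)=1$, i.e.\ $\mu(x,y)=(-1)^{\rho(x,y)}$.

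Next I would treat the case where (H1) holds. Choose $(j,k)\in\mathcal{E}^{\times}(y\backslash x)$ coming from a crossing $(i,k),(j,l)\in\mathcal{E}(y)$ with $i<j<k<l$, and use the order relation recorded in Lemma \ref{lemma:EL}, namely $(j,k)<(i,k)<(j,l)$ under Eq.~(\ref{eq:defLambda}). In the putative decreasing maximal chain, the edge $(j,k)$ is added at some step $m$ (it is not in $\mathcal{E}(x)$ by hypothesis), while $(i,k)$ and $(j,l)$ are both strictly larger and both lie in $\mathcal{E}(y)$; hence each of them is either already present in $\mathcal{E}(x)$ or is added at an earlier step. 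Consequently the intermediate set just before step $m$ contains both $(i,k)$ and $(j,l)$ but not $(j,k)$, violating (B1). So no prefix-valid decreasing chain exists, the number of decreasing maximal chains is $0$, and Proposition \ref{prop:MobiusRlabel} yields $\mu(x,y)=0$. Combining the two cases gives Eq.~(\ref{eq:Moebius}).

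The main obstacle is the bookkeeping in the crossing case: one must verify that the two crossing edges $(i,k)$ and $(j,l)$ genuinely precede the forced edge $(j,k)$ in \emph{every} decreasing chain, which is exactly what the order $(j,k)<(i,k)<(j,l)$ guarantees, and that the offending intermediate set fails specifically because of (B1) rather than (A2)--(A3). The remaining subtlety is confirming, in the non-crossing case, that all intermediate sets satisfy (B1) so that the interval really is Boolean; this is handled cleanly by the observation above that $\mathcal{E}^{\times}(y\backslash x)=\emptyset$ places every crossing-forced edge already inside $\mathcal{E}(x)$.
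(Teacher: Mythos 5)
Your proposal is correct and follows essentially the same route as the paper: apply Proposition \ref{prop:MobiusRlabel} to the $EL$-labeling of Lemma \ref{lemma:EL}, show the interval is Boolean when $\mathcal{E}^{\times}(y\backslash x)=\emptyset$ (so the unique decreasing chain exists and $\mu(x,y)=(-1)^{\rho(x,y)}$), and show no decreasing chain can exist under (H1) because the forced edge $(j,k)$ satisfies $(j,k)<(i,k)<(j,l)$ yet must be added before both crossing edges are present. Your version is slightly more explicit than the paper's in verifying that every intermediate edge set satisfies (B1) in the Boolean case and in phrasing the (H1) case contrapositively, but these are the same argument.
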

\begin{proof}
Suppose $\mathcal{E}^{\times}(y\backslash x)=\emptyset$.
A maximal chain from $x$ to $y$ has its edge-labels in 
$\mathcal{E}(y\backslash x):=\mathcal{E}(y)\setminus\mathcal{E}(x)$, and 
there is no constraint on the order of the directed edges.
From Proposition \ref{prop:Boolean}, the interval $[x,y]$ is isomorphic to the Boolean lattice, 
and hence Eulerian. Then, we have $\mu(x,y)=(-1)^{\rho(x,y)}$.

Suppose $x$ and $y$ satisfy the condition (H1).
Let $c$ be a maximal chain from $x$ to $y$. 
From the condition (H1), there exists an edge $(j,k)$ such that 
$(j,k)\in\mathcal{E}(y\backslash x), \mathcal{E}(y)$, and $(i,k)$ and $(j,l)$ are also in 
$\mathcal{E}(y)$ for $i<j<k<l$.
The order of these three edges are 
\begin{align}
\label{eq:ordcross}
(j,k)<(i,k)<(j,l),
\end{align}
by Eq. (\ref{eq:defLambda}).
By the definition of the covering relation on $P(n;\epsilon)$, 
the edge label $(j,k)$ is followed by an edge label $(i,k)$ or $(j,l)$, or 
by both in the chain $c$.
This observation and Eq. (\ref{eq:ordcross}) imply that the chain $c$ cannot be 
a decreasing chain.
The interval from $x$ to $y$ does not have a decreasing chain.
From Proposition \ref{prop:MobiusRlabel}, we have $\mu(x,y)=0$, 
which completes the proof.
\end{proof}

\begin{example}
Consider the poset in Figure \ref{fig:LE}.
By a simple calculation, we have $\mu(\hat{0},\hat{1})=0$ and $\mu(x,y)=(-1)^{\rho(x,y)}$ for all  
$(x,y)=(\hat{0},y)$ with $y\neq\hat{1}$.
The directed edge $(2,3)$ in the poset is a crossing edge.
\end{example}

\bibliographystyle{amsplainhyper} 
\bibliography{biblio}

\end{document}